\documentclass[11pt,reqno]{amsart}

\usepackage[T1]{fontenc}
\usepackage[utf8]{inputenc}
\usepackage{amsmath}
\usepackage{amssymb}
\usepackage{amsthm}
\usepackage{array}
\usepackage{booktabs}
\usepackage{needspace}
\usepackage[margin=1.1in]{geometry}

\usepackage[hidelinks]{hyperref}
\hypersetup{pdfauthor={Enkai Zhang},pdftitle={Sharp order-preserving integer models for short additive equalities}}

\theoremstyle{plain}
\newtheorem{theorem}{Theorem}[section]
\newtheorem{proposition}[theorem]{Proposition}
\newtheorem{lemma}[theorem]{Lemma}
\newtheorem{corollary}[theorem]{Corollary}
\newtheorem{question}[theorem]{Question}
\theoremstyle{definition}
\newtheorem{definition}[theorem]{Definition}
\newtheorem{remark}[theorem]{Remark}

\theoremstyle{plain}
\newtheorem{mainthm}{Theorem}

\newcommand{\R}{\mathbb{R}}
\newcommand{\Z}{\mathbb{Z}}
\newcommand{\Q}{\mathbb{Q}}
\newcommand{\mass}{\mathrm{m}}
\newcommand{\height}{\operatorname{height}}
\newcommand{\diam}{\operatorname{diam}}

\newcommand{\spn}{\operatorname{span}}
\newcommand{\conv}{\operatorname{conv}}

\makeatletter
\let\Models@linkfile\hyper@linkfile
\def\hyper@linkfile#1#2#3{\begingroup
  \let\Hy@href@page\@empty
  \Models@linkfile{#1}{#2}{#3}\endgroup}
\makeatother

\theoremstyle{plain}
\newtheorem{Stheorem}{Theorem}[section]
\theoremstyle{plain}

\theoremstyle{plain}

\theoremstyle{plain}

\theoremstyle{plain}

\theoremstyle{definition}

\theoremstyle{definition}
\newtheorem{Sremark}[Stheorem]{Remark}
\theoremstyle{definition}

\theoremstyle{plain}

\providecommand{\MainPrefix}{}
\providecommand{\SuppPrefix}{}
\hypersetup{hypertexnames=false}
\begin{document}
\hypertarget{article-start}{}
\renewcommand{\SuppPrefix}{Supplement }

\raggedbottom

\title[Sharp ordered integer models for short additive equalities]
      {Sharp order-preserving integer models \\ for short additive equalities}

\author{Enkai Zhang}
\address{University of Toronto Scarborough, Toronto, Ontario, Canada}
\email{ek.zhang@mail.utoronto.ca}

\subjclass[2020]{11B75, 11P70, 11B13}
\keywords{Freiman isomorphism, order-preserving model, sumset, balanced
relation, $B_h$ set, additive square}

\begin{abstract}
We ask how small an increasing integer model of a finite real set can
be while preserving all equalities between equal-length sums of at most
$q$ elements, with repetitions allowed.
The increasing correspondence must preserve exactly which sums are equal;
the signs of unequal comparisons may change. Let $H_m(q)$ be the least
diameter sufficient for every ordered real set of size $m$.
For every integer $q\ge2$ we prove
$H_4(q)=q(q+1)$ and $H_5(q)=q^2(q+1)$, and we determine $H_6(2)=24$.
The proofs use the linear space determined by the additive equalities
and the inequalities prescribing the order of the elements.
They also give bounds for larger sets, determine the first two asymptotic
terms for each fixed $m\ge4$, and yield further families of sharp examples.
An application bounds the integer alphabets needed to realize finite
additive-square-free spectra. The supplementary data support the finite
six-element classification.
\end{abstract}

\maketitle
\enlargethispage{3pt}

\section{Introduction}\label{M-sec:intro}

\subsection{The problem}
Fix integers $m\ge2$ and $q\ge1$. Let $T=(t_0<\cdots<t_{m-1})$ be a real set written in increasing order.
We seek integers $s_0<\cdots<s_{m-1}$ with small diameter that reproduce
exactly its short additive equalities. More precisely, for every
$1\le h\le q$ and every two lists of indices, repetitions allowed, we require
\[
 \sum_{\nu=1}^h t_{i_\nu}=\sum_{\nu=1}^h t_{j_\nu}
 \quad\Longleftrightarrow\quad
 \sum_{\nu=1}^h s_{i_\nu}=\sum_{\nu=1}^h s_{j_\nu}.
\]
We call $S=(s_0,\ldots,s_{m-1})$ an order-preserving integer $q$-model of $T$. This is an
order-preserving Freiman isomorphism of order $q$: padding both sides with
the same terms makes testing all $h\le q$ equivalent to testing $h=q$.
The order of the individual elements must be retained. The order of two
unequal sums need not be retained; they must simply remain unequal.

Translating $s_0$ to zero and dividing by the gcd of the remaining entries
preserve the requirements. We therefore consider primitive models beginning
at zero. Let $H_m(q)$ be the least integer $D$ such that every ordered real
$m$-element set has a model of diameter at most $D$. For $q\ge2$, existence follows from Theorem~\ref{M-thm:flag-model};
for $q=1$, any strictly increasing integer set has the required profile. The word ``alphabet'' will also be used for such a set when we
discuss sequences of its elements.

\paragraph{Example.}
Take $q=2$ and $T=(0,\theta,1,2)$ with $0<\theta<1$ irrational.
Its nontrivial two-term equality is $0+2=1+1$. The ordered integer set
$(0,1,3,6)$ preserves this equality as $0+6=3+3$ and introduces no other
nontrivial two-term equality. Theorem~\ref{M-thmE} also shows that diameter six is
necessary for this example. The problem is to control such a diameter
uniformly for every ordered set, including sets with different collections
of short equalities.

\subsection{Main results}
\begin{mainthm}\label{M-thmA}
$H_4(q)=q(q+1)$ for every $q\ge2$, and $H_4(1)=3$.
\end{mainthm}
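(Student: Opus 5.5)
The plan is to identify the worst-case four-element configuration for the lower bound, and then prove a matching upper bound by splitting according to the linear space of short relations.

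\textbf{Reformulation.} A nontrivial equality of two $h$-term sums with $h\le q$ is an integer vector $c\in\Z^4$ with $\sum_i c_i=0$, $\sum_i c_i^+\le q$ and $\sum_i c_i t_i=0$. Call such a $c$ a \emph{short relation}. The requirement on $S$ is then the following. Every short relation of $T$ must vanish on $S$. Every other short vector must not vanish on $S$. In addition $s_0<s_1<s_2<s_3$.

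Let $R\subseteq\Q^4$ be the span of the short relations of $T$. Because $s_0=0$ is fixed, $S$ has to lie in the space $V=\{x: x_0=0,\ c\cdot x=0 \text{ for } c\in R\}$. It must avoid finitely many forbidden hyperplanes, namely $c\cdot x=0$ for short $c\notin R$. It must also lie in the open order cone. Since $T$ itself lies in $V$, in that cone and off those hyperplanes, this region is nonempty. The upper bound therefore amounts to showing that the region always contains a primitive integer point of diameter at most $q(q+1)$.

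\textbf{Lower bound.} Take $T=(0,\theta,1,q)$ with $\theta\in(0,1)$ irrational, generalizing the example in the introduction. It has the short relation $q\,t_2=(q-1)t_0+t_3$, so every model satisfies $s_3=q s_2$. Suppose $s_2\le q$, so that $1\le s_1<s_2\le q$. Then $s_2$ copies of $s_1$ and $s_1$ copies of $s_2$ (padded with $s_0=0$) give a new equality of length at most $q$, whereas $T$ has no relation involving $\theta$. Hence $s_2\ge q+1$ and the diameter is at least $q(q+1)$.

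For $q=1$ the diameter is trivially at least $3$, and $(0,1,2,3)$ works because there are no nontrivial $1$-term relations. This settles $H_4(1)=3$.

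\textbf{Upper bound.} I would split by $r=\dim R$.
\begin{itemize}
\item $r=2$. Then $V$ is a line, so $S$ is forced to be the primitive integer point on that line. It automatically has exactly the right relations, since it is an affine image of $T$. Its diameter is a $2\times2$ minor of two relations with coefficients of size at most $q$. I would check, by enumerating the few shapes of two independent short relations compatible with the order, that this diameter is at most $q(q+1)$.
\item $r=1$, with relation $c$. Now $V$ is a plane. In coordinates $(s_1,s_2,s_3)$, the order cone intersected with $V$ is a two-dimensional cone in a lattice. I would go through the possible shapes of $c$: which indices carry positive and which carry negative coefficients, given the order. For each shape I would exhibit an explicit low-height integer point, modeled on $(0,1,q+1,q(q+1))$, and verify that it avoids every other short relation.
\item $r=0$. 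Here one needs a $B_q$-type ordered $4$-set of diameter at most $q(q+1)$. A candidate is $(0,1,q+1,q(q+1))$ or a near variant. Note that this is not the lower-bound example, because it has $s_3=q s_2+q\neq q s_2$. I would confirm that no vector $c$ with $\sum c_i^+\le q$ vanishes on the candidate, by comparing coefficients in base $q+1$.
\end{itemize}

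\textbf{Main obstacle.} The hard step is the $r=1$ case. There are many relation shapes, such as $a t_1+b t_2=(a+b)t_0$-type, three-term and four-term relations. For each, one needs a uniform point of height at most $q(q+1)$ that simultaneously avoids all forbidden hyperplanes.

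My first attempt would be a counting argument inside the planar lattice cone. Count the integer points of diameter at most $q(q+1)$ in the relevant cone. Compare this with the number of points that can lie on the forbidden lines, each line contributing at most about the diameter divided by the lattice step. Only where the count fails would I fall back on explicit constructions. The existence theorem (Theorem~\ref{M-thm:flag-model}) can serve as a guide for choosing the points in a flag order.
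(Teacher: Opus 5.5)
Your lower bound is correct and is the paper's argument: the extra relation $(s_1-s_2,s_2,-s_1,0)$ has mass $s_2\le q$. Splitting the upper bound by rank is also what the paper does. The gap is in the constructions, and the clearest failure is rank zero.

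Your candidate $(0,1,q+1,q(q+1))$ is not relation-free. Since $qs_2=q(q+1)=s_3$, it satisfies $(q-1,0,-q,1)$, of mass $q$. It is exactly the model of your own extremal alphabet $\{0,\theta,1,q\}$; the claim $s_3=qs_2+q$ is an arithmetic slip. The natural repairs all overshoot the budget:
\begin{itemize}
\item base-$(q+1)$ digit comparison certifies sets such as $\{0,1,q+1,(q+1)^2\}$, of diameter $(q+1)^2$;
\item appending $qc+1$ to $\{0,1,q+1\}$ (Lemma~\ref{M-lem:append-qc-plus-one}) gives $q^2+q+1$;
\item the flag bound with $d=3$, $H=1$ also gives $1+q+q^2$.
\end{itemize}
At $q=2$ there is no slack at all. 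Relation-free at order two means a Golomb ruler, and the only four-mark rulers of length at most $6$ are $\{0,1,4,6\}$ and its mirror. So no set beginning $0,1,3$ works. What is missing is a genuinely more economical relation-free set. The paper uses $\{0,1,b,b+d\}$ with $b=\binom{q+1}{2}+1$, and $d=q$ or $q+1$ according as $q$ is even or odd. It then runs a case analysis on $u=y+z$ showing that every nonzero balanced relation on this set has mass at least $q+1$.

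By contrast, the case you flag as the main obstacle is settled by a tool already available, and your proposed counting method for it would not work. Counting has essentially no room. For the profile $s_3=qs_2$, every lattice point of the cone with $s_2\le q$ lies on a forbidden line. Of the $q(q+1)/2$ points of height at most $q(q+1)$, only the $\varphi(q+1)$ points with $s_2=q+1$ and $\gcd(s_1,q+1)=1$ survive.

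The right tool is the adjacent-chamber construction, i.e.\ Theorem~\ref{M-thm:flag-model} with $d=2$, which you mention only as a guide:
\begin{itemize}
\item An extreme ray $u_1$ of the ordered kernel cone is cut out by the relation and one mass-one ordering row, so it has height at most $H\le q$.
\item The chamber of the $q$-relation arrangement that has $u_1$ as a bounding ray has a second ray $u_2$. This ray is cut out by the relation and one further row of mass at most $q$, so its height is at most $Hq$.
\item The sum $u_1+u_2$ lies in the open chamber. It is therefore strictly ordered, has exactly the required profile, and has height at most $q(q+1)$.
\end{itemize}

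Rank two has a smaller gap of the same kind. Entrywise bounds on the $2\times2$ minor give only about $2q^2$, and enumerating ``shapes'' is not a proof for all $q$. The needed input is the transfer-minor Lemma~\ref{M-lem:mass-minor}, which bounds the minor by $h_1h_2\le q^2$.
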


\begin{mainthm}\label{M-thmB}
$H_5(q)=q^2(q+1)$ for every $q\ge2$.
\end{mainthm}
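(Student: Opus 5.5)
The plan is to prove the two inequalities $H_5(q)\ge q^2(q+1)$ and $H_5(q)\le q^2(q+1)$ separately. Both rest on the same structural reduction, which I would use throughout the paper. For an ordered real set $T$, let $L_T\subseteq\{x\in\R^5: x_0=0\}$ be the linear space cut out by the balanced relations $\sum_\nu x_{i_\nu}=\sum_\nu x_{j_\nu}$ (with $h\le q$) that $T$ satisfies. A model is then exactly an integer point $s\in L_T$ with:
\begin{itemize}
\item $s_0<s_1<\cdots<s_4$;
\item $s$ avoiding the finitely many hyperplanes given by the balanced relations $T$ does \emph{not} satisfy.
\end{itemize}
Since $T$ itself lies in the open cone $L_T\cap\{x_0<\cdots<x_4\}$ and off those hyperplanes, this cone is nonempty. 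The problem becomes bounding the smallest diameter $s_4$ of a primitive lattice point in that cone avoiding the bad hyperplanes.

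I would split into cases by $d=\dim L_T\in\{1,2,3,4\}$.
\begin{itemize}
\item \textbf{Case $d=1$.} $L_T$ is a rational line, and the model is forced to be its primitive generator.
\item \textbf{Cases $d\ge2$.} Use the flag construction of Theorem~\ref{M-thm:flag-model}. Pick a rational basis adapted to the order inequalities and take $s=v_1+\lambda v_2+\cdots$ with suitable integer weights. Bad hyperplanes are avoided because a balanced relation with coefficients of total mass at most $2q$ that vanishes on $s$ must vanish on each flag layer. Choosing the weights minimally (roughly powers of $q+1$, or $q$ times a smaller model) should give a diameter bound, and I expect the general-$m$ bound of this type specializes to at most $q^2(q+1)$ for $m=5$.
\end{itemize}

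The hardest part of the upper bound is $d=1$. Here one must show that whenever the relations of $T$ determine a unique line, its primitive generator has $s_4\le q^2(q+1)$. I would write the defining relations as an integer $4\times5$ matrix of rank $4$ whose rows have $\ell_1$-norm at most $2q$ and row sum zero. The generator is then given by $4\times4$ minors, by Cramer's rule. A Hadamard-type bound is far too weak, so instead I would exploit the order constraint and the row-sum-zero structure. A row with both positive and negative entries forces the "middle" elements to be weighted averages of outer ones. Choosing a spanning set of relations greedily (first relations involving $t_0$ and $t_4$, and so on) should reduce the analysis to a short list of relation-graph shapes. For each shape I would bound the resulting minors directly, hoping to show that a product of $q$ and $q(q+1)$ is the worst case. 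This case analysis is where I expect the real work.

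For the lower bound I would exhibit an extremal $T$ of type $d=1$, so that the model is unique up to scaling and its diameter is exactly $q^2(q+1)$. The $m=4$ extremal example from Theorem~\ref{M-thmA}, of diameter $q(q+1)$, suggests chaining one more relation with scaling factor $q$. Concretely, I would take $(0,1,q+1,\dots)$-type points satisfying $q\,t_{1}=(q-1)t_0+t_{2}$-style relations, each of weight $q$, iterated so the ratio $t_4/t_1$ becomes $q^2(q+1)$. I would verify two things:
\begin{itemize}
\item the relations have rank $4$;
\item the resulting integer vector, once primitive, has $\gcd$ $1$ and creates no additional equalities.
\end{itemize}
The second point is automatic, since the model must equal the scaled $T$ itself. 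The lower bound is therefore a direct computation, and the main obstacle is the upper-bound bound on minors in the rank-$4$ case.
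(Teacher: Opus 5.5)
\textbf{Lower bound.} A one-dimensional kernel cannot give the lower bound. If $d=1$, the model is forced to be the primitive generator of the kernel line of three independent relations of mass at most $q$. Its height is a $3\times3$ cofactor of their deleted rows. The transfer-minor estimate (Lemma~\ref{M-lem:mass-minor}) bounds that cofactor by $q^3$: split each row into at most $q$ unit transfers $\pm e_i$, $e_i-e_j$, whose square minors are $0$ or $\pm1$, and expand multilinearly. Since $q^3<q^2(q+1)$, every $d=1$ alphabet has a model of diameter at most $q^3$ (Corollary~\ref{M-cor:max-rank}). So your chained example cannot have both $d=1$ and ratio $q^2(q+1)$.

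The natural candidate $\{0,1,q+1,q(q+1),q^2(q+1)\}$ satisfies only two independent $q$-relations. Its models are therefore not unique up to scaling, and your \emph{automatic} minimality step fails. The extremal alphabet needs a two-dimensional kernel with a letter not pinned down by relations; the paper uses $\{0,\theta,1,q,q^2\}$ with $\theta$ irrational. Every increasing model is $\{0,a,b,qb,q^2b\}$ with $0<a<b$. If $b\le q$, then $(a-b,b,-a,0,0)$ is a relation of mass $b\le q$ that vanishes on the model but not on the original, so $b\ge q+1$. The sharp lower bound comes from excluding spurious relations, not from a forced ratio.

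\textbf{Upper bound.} You have put the difficulty in the wrong place. The case $d=1$ is settled in one line by the same estimate, with no analysis of relation shapes. The genuine gap is your expectation that the flag construction gives $q^2(q+1)$ whenever $d\ge2$. Theorem~\ref{M-thm:flag-model} gives $H\sum_{j<d}q^j$ with $H\le q^{4-d}$:
\begin{itemize}
\item for $d=2$ this is $q^2(q+1)$, as needed;
\item for $d=3$ it is $\mu(q^2+q+1)$, which equals $q^3+q^2+q$ when the primitive relation has mass $\mu=q$;
\item for $d=4$ it is $q^3+q^2+q+1$.
\end{itemize}
The last two exceed the target, as does the face-lifting recurrence $q^3+H_4(q)$. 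Choosing weights ``roughly powers of $q+1$'' gives no mechanism for recovering the missing $q$.

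The paper needs two further ingredients.
\begin{itemize}
\item For $d=4$ it writes down an explicit relation-free model. For $q\ge3$ it appends $qc+1$ to the optimal four-letter ruler $\{0,1,b,c\}$, giving diameter at most $(q^3+3q^2+4q+2)/2$. At $q=2$ it uses $\{0,1,4,9,11\}$.
\item For $d=3$ it proves the mass-sensitive bound $\mu q(q+1)$ (Proposition~\ref{M-prop:rank-one-mass}). The primitive relation is sorted by support and sign pattern into five cases (Lemma~\ref{M-lem:rank-one-support}). These are handled by a facet on which merging two adjacent coefficients lowers the mass, a facet forcing two zero gaps, insertion of a free letter next to an endpoint, and two explicit anchor constructions.
\end{itemize}
This rank-one analysis is the core of the theorem, and your proposal does not contain it.
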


\begin{mainthm}\label{M-thmC}
$H_6(2)=24$.
\end{mainthm}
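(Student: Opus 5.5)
Theorem~\ref{M-thmC} requires two inequalities, $H_6(2)\le 24$ and $H_6(2)\ge24$, and both are finite statements about the possible two-term equality profiles of a six-element ordered set. Let $T=(t_0<\dots<t_5)$. The relevant structure is the \emph{relation space} $L_T\subseteq\{x\in\Q^6:\sum x_i=0\}$, spanned by the vectors $e_a+e_b-e_c-e_d$ for which $t_a+t_b=t_c+t_d$ holds in $T$. An integer model is a vector $s$ with three properties: $s$ is strictly increasing, $s\perp L_T$, and $s$ avoids the finitely many hyperplanes $\langle e_a+e_b-e_c-e_d,s\rangle=0$ whose normal vectors are not in $L_T$. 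The set $T$ itself is a real point of this open polyhedral cone, which is therefore nonempty. The task is to bound the smallest integer point in the cone, and to exhibit one profile whose smallest point has diameter exactly $24$.

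\textbf{Lower bound.} I would construct an explicit $T$ whose profile forces diameter $24$, following the pattern of the four- and five-element extremal examples behind Theorems~\ref{M-thmA} and~\ref{M-thmB}. For those sizes the extremal sets have $L_T$ of codimension two, so the models form a two-parameter family. A natural guess here is a set where $L_T$ cuts the solution space down to $\dim\le 2$ (after fixing $s_0=0$), with an obvious rational solution that is degenerate because it satisfies an extra forbidden equality. In the example from the introduction, the rational solution $(0,1,2,3)$ of $0+3=1+2$ is excluded only because $1+2$ would equal $0+3$ twice over; the point is to find the analogous forcing for six elements.

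Once the model space is parametrized as $s=u\alpha+v\beta$ with $\alpha,\beta$ integer generators, the order constraints become linear inequalities in $(u,v)$ and each forbidden equality removes a line. I would then enumerate the small lattice points and show that every admissible one has $s_5\ge 24$. This is a direct check once the correct $T$ has been found.

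\textbf{Upper bound.} This is the main obstacle. I would first use the general upper bound from the paper's flag-model theorem (Theorem~\ref{M-thm:flag-model}), which bounds the diameter in terms of $m$ and $q$. I would then classify the possible spaces $L_T$ for $m=6$, $q=2$. These are finitely many subspaces spanned by balanced relation vectors, and they can be reduced modulo reversal $t_i\mapsto -t_{5-i}$. For each realizable profile, where realizability means the open cone is nonempty, I would exhibit a model of diameter at most $24$.

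High-codimension profiles (small $L_T$) are handled generically: a Sidon-type set, or a set with few relations, of small diameter works. Examples are $(0,1,4,10,12,17)$-like configurations adapted so that exactly the required equalities hold. The dangerous cases are those where $\dim L_T^\perp$ is $2$ or $3$, since there the cone is thin. For these I would argue as in the lower bound, by parametrizing and searching lattice points. This is presumably where the paper's supplementary data come in: a computer enumeration of all realizable relation spaces, each paired with a certified model of diameter at most $24$.

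To make this rigorous on paper, I would reduce the number of cases in three ways. First, when $L_T$ contains the relations of a five-element subset, apply Theorem~\ref{M-thmB}'s machinery to that subset and extend. Second, when $T$ is an arithmetic-like configuration, the model space is one-dimensional and the model is unique up to scaling. Third, the remaining cases are certified by finite verification.
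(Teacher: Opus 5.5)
Your outline has the right overall shape (an extremal witness plus a certified finite classification), but neither half is actually carried out. \textbf{Lower bound.} You never name a witness, and \emph{finding the analogous forcing} is the whole content of this half. The example you want, consistent with your own remark that the four- and five-letter extremal sets have two-dimensional model spaces, is $T=\{0,\theta,1,2,4,8\}$ with $\theta$ irrational. Its vanishing $2$-relations are exactly $t_0+t_3=2t_2$, $t_0+t_4=2t_3$ and $t_0+t_5=2t_4$. Hence every increasing integer model translated to $0$ has the form $\{0,a,b,2b,4b,8b\}$ with $0<a<b$. If $b=2$ then $a=1$, and the new equality $s_0+s_2=2s_1$ appears; it is false on $T$. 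So $b\ge3$, the diameter is at least $24$, and $\{0,1,3,6,12,24\}$ attains it. This is also the mechanism of the introductory example. Note that its relation is $t_0+t_3=2t_2$, not $0+3=1+2$, and its degenerate point is $(0,1,2,4)$.

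\textbf{Upper bound.} Here you have the difficulty backwards. The flag theorem with $H\le2^r$ gives $\sum_{j=r}^{4}2^j$, that is, $31,30,28,24,16$ for ranks $r=0,\dots,4$. The thin cases you single out have $\dim L_T^\perp\in\{2,3\}$, i.e.\ model spaces of dimension one or two after fixing $s_0=0$, so $r=4,3$. These are settled outright by Corollary~\ref{M-cor:max-rank} and Theorem~\ref{M-thm:corank-model}, with no lattice search. The profiles with few relations, which you propose to handle generically, are exactly where the general bounds fail, and they have no slack. The rank-one profile $t_0+t_5=2t_1$ needs diameter $22$. Four rank-two profiles need exactly $24$; one is $t_0+t_4=2t_1$, $t_0+t_5=2t_4$, with minimal model $\{0,6,7,10,12,24\}$. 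So \emph{adapting a Sidon-type set so that exactly the required equalities hold} is the whole problem, not a routine step. Your five-letter reduction does not rescue it either: extending a five-letter model costs up to $2^4$, and the face-lifting recurrence gives only $16+H_5(2)=28$.

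What is missing for $r\le2$ is a complete, certified list with three parts:
\begin{enumerate}
\item A finite candidate family guaranteed to contain every real profile: the $487$ row-space closures of the $631$ sets of at most two of the $35$ rows.
\item An exact infeasibility certificate for each of the $145$ closures containing no strictly increasing real point: a nonzero nonnegative integer combination of the two basis gap rows.
\item A model of diameter at most $24$ for each of the other $342$, found by exhausting all $\binom{24}{5}$ alphabets.
\end{enumerate}
Without a method to decide realizability by strictly increasing reals, you cannot tell which profiles need models.
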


Theorems~\ref{M-thmA} and~\ref{M-thmB} hold for every $q$ and have hand
proofs. Theorem~\ref{M-thmC} combines the general bounds with a finite
classification at $q=2$. For larger sets, Theorem~\ref{M-thmF} gives a
recurrence and determines the first two asymptotic terms for fixed $m$.

Two constructions underlie these values. In the relation kernel, we
choose compatible chambers along a flag of ordering faces and add integer
ray generators with controlled heights (Theorem~\ref{M-thm:flag-model}).
A second construction lifts a model on an ordering face by one further
ray (Theorem~\ref{M-thm:lifting}). Together they give the following bound.

\begin{mainthm}[general bounds]\label{M-thmF}
For $m\ge3$ and $q\ge2$,
\[
 q^{m-2}+q^{m-3}\le H_m(q)\le q^{m-2}+H_{m-1}(q),
\]
with $H_2(q)=1$. Consequently $H_m(q)\le\sum_{j=2}^{m-2}q^j$ for $m\ge5$,
\[
 H_m(q)=q^{m-2}+q^{m-3}+O_m(q^{m-4})\qquad(m\ge4\text{ fixed},\ q\to\infty),
\]
the lower bound is attained for $m=3,4,5$, and
$q^4+q^3\le H_6(q)\le q^4+q^3+q^2$.
\end{mainthm}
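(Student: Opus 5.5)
The plan is to prove the two inequalities separately, derive the consequences by unrolling the recursion from the exact value $H_5(q)=q^3+q^2$ of Theorem~\ref{M-thmB}, and finally compare the two bounds.

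\emph{Upper bound.} I would apply the lifting construction, Theorem~\ref{M-thm:lifting}. Given an ordered real $m$-set $T$, restrict to an ordering face obtained by deleting one suitable element (or, equivalently, collapsing one gap). By definition of $H_{m-1}(q)$ this face has a model of diameter at most $H_{m-1}(q)$. The lifting theorem then adds one integer ray generator. I expect its height to be controllable by $q^{m-2}$, because the new coordinate is constrained only through relations of length at most $q$ along a chain of $m-2$ steps. This gives $H_m(q)\le q^{m-2}+H_{m-1}(q)$. The hard step is to check that the lifted set creates no new $q$-term equalities and keeps the order. I would try to get this directly from the chamber-compatibility statement in Theorem~\ref{M-thm:lifting}, so that the only remaining work is the height estimate.

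\emph{Lower bound.} I would generalize the introductory example. Take
\[
T=(0,\theta,1,q,q^2,\dots,q^{m-3})
\]
with $\theta$ irrational. Its relations include $q\,t_k=t_{k+1}+(q-1)t_0$ for $k\ge2$. In any model with $s_0=0$ these relations force $s_k=q^{k-2}s_2$. If $s_2\le q$, then $0<s_1<s_2$, and
\[
s_2\cdot s_1=s_1\cdot s_2
\]
is an equality of two sums with at most $q$ summands each, padded with copies of $s_0=0$. This equality does not hold in $T$, since $\theta$ is irrational. Hence $s_2\ge q+1$, and the diameter is at least $q^{m-3}(q+1)=q^{m-2}+q^{m-3}$. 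For $m=3$ the same argument applies to $(0,\theta,1)$ and gives $q+1$.

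\emph{Consequences.} Since $H_5(q)=q^3+q^2$, induction with the upper recursion gives
\[
H_m(q)\le q^{m-2}+\dots+q^3+q^2=\sum_{j=2}^{m-2}q^j \qquad (m\ge5).
\]
For $m=6$ this is $q^4+q^3+q^2$, and the lower bound gives $q^4+q^3$, as claimed. For fixed $m\ge5$ both bounds are $q^{m-2}+q^{m-3}+O_m(q^{m-4})$. For $m=4$ the asymptotic claim holds exactly, by Theorem~\ref{M-thmA}.

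\emph{Attainment of the lower bound.} For $m=3$, the recursion with $H_2(q)=1$ gives $H_3(q)\le q+1$, which matches the lower bound. For $m=4$ and $m=5$, attainment is exactly Theorems~\ref{M-thmA} and~\ref{M-thmB}.
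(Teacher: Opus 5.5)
Your proposal is correct and matches the paper's proof. The upper recurrence is exactly Theorem~\ref{M-thm:lifting}. The lower bound uses the same witness $\{0,\theta,1,q,\dots,q^{m-3}\}$ and the same extra relation $(s_1-s_2,s_2,-s_1,0,\dots,0)$ of mass $s_2$. The remaining claims follow by iterating from $H_5(q)=q^3+q^2$, using $H_2(q)=1$ (Lemma~\ref{M-lem:elementary}) and Theorems~\ref{M-thmA} and~\ref{M-thmB}.

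Citing Theorem~\ref{M-thm:lifting} already gives the full inequality, so you owe no further height estimate. Note that its proof does not delete a letter of the alphabet. Instead it models the block values of a point on an ordering facet of the kernel cone, and it bounds the lifted ray by the cofactor estimate of Lemma~\ref{M-lem:mass-minor} applied to $m-2$ rows of mass at most $q$.
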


The sharp examples for four and five letters are members of the
same irrational power family in Theorem~\ref{M-thm:corank-model}.
For five letters, the main difficulty is relation rank one;
Proposition~\ref{M-prop:rank-one-mass} proves the stronger bound
$\mu q(q+1)$ in terms of primitive relation mass.
The finite six-letter theorem covers the remaining low ranks by exact
feasibility certificates and integer models. It uses no sampling of real
parameters. Further exact families and the application to finite
additive-square-free spectra follow the universal bounds.

\subsection{Related work}\label{M-sec:intro-shin}\label{M-sec:intro-attribution}\label{M-sec:intro-label}
Our models preserve additive equalities and the order of the elements, but may
change the signs of nonzero comparisons between sums. Shin \cite{M-Shin26} studies the
stronger sign-preserving model problem. In his terminology our invariant is
the weak addition-table type. The difference in requirements is essential:
the chambers in our proof are chosen along ordering faces and need not contain
the original alphabet.

The transfer-minor estimate is \cite[Lemma~7.1]{M-Shin26}. The kernel-cone and cofactor
framework follows \cite[Theorem~7.2]{M-Shin26} and the order-preserving Freiman methods
of Amirkhanyan, Bush and Croot \cite{M-ABC18}. The maximal-rank value and the exact
relation-free four-set diameter $L_4(q)$ are also known inputs from \cite{M-Shin26}.
We retain their precise attributions where they are used. Our additions are
the compatible-face flag with graded ray heights, the face-lifting recurrence,
and the sharp equality-only model bounds. Remarks~\ref{M-rem:flag-attribution}, \ref{M-rem:corank-attribution} and~\ref{M-rem:four-ranks} compare
the relevant estimates directly.

Finite real-to-integer Freiman modeling is classical; see \cite{M-GR09,M-Nat18,M-OBr25}, with
structural background in \cite{M-Fre73,M-Gry13,M-KL00}. Nathanson's label-realization radius \cite{M-Nat26}
concerns a weaker invariant, the number of distinct sums. Remark~\ref{M-rem:label-length} compares
that quantity with the ordered model radius; neither the four-point
label-level open question nor the infinite integer avoidance problem is
settled here. The application to additive-square-free spectra supplies
explicit model diameters for finite observations, not a new qualitative
real-to-integer principle.

For comparison, Shin's sign-preserving universal radius is at least
$1+q+\cdots+q^{m-2}$ \cite[Theorem~7.9]{M-Shin26}. The bounds here are
strictly smaller for $m\ge4$; at $m=3$, the equality-only radius $q+1$
equals that lower bound. The precise estimates are compared at their
points of use below.

\subsection{Organization}
Section~\ref{M-sec:prelim} fixes the definitions and proves the transfer-minor
lemma and its one-dimensional-kernel consequence. Section~\ref{M-sec:flag}
proves the flag bound, Section~\ref{M-sec:corank} the two-dimensional-kernel
theorem, Section~\ref{M-sec:four} the four-letter theorem and
Section~\ref{M-sec:five} the mass-sensitive rank-one bound and the five-letter
theorem. Section~\ref{M-sec:lifting}
proves the face-lifting recurrence and the general bounds.
Section~\ref{M-sec:six} proves $H_6(2)=24$. Section~\ref{M-sec:cluster}
determines the exact families, Section~\ref{M-sec:rank-sensitive} gives
further rank-one examples, and
Section~\ref{M-sec:application} the consequences for additive-square-free
spectra. Section~\ref{M-sec:open} lists what remains open. A technical
supplement collects the data of the six-letter classification, small tables and an alternative lattice proof in dimension two.

\section{Definitions and the transfer-minor lemma}\label{M-sec:prelim}

In the example alphabets below, $\theta$ denotes an irrational number
in $(0,1)$. When both $\theta$ and $\eta$ occur, we assume
$0<\theta<\eta<1$ and that $1,\theta,\eta$ are linearly independent
over $\Q$.

\subsection{Balanced relations, profiles and models}
A balanced vector $\delta\in\Z^m$ has $\sum_i\delta_i=0$. Its mass is
\[
 \mass(\delta)=\sum_i\max(\delta_i,0)=\sum_i\max(-\delta_i,0).
\]
A \emph{$q$-relation} is a balanced vector of mass at most $q$. For an ordered
alphabet $T=(t_0<\cdots<t_{m-1})$ we write $\delta\cdot T=\sum_i\delta_it_i$.
Its \emph{$q$-zero profile} is
\[
 Z_q(T)=\{\delta\in\Z^m:\ \textstyle\sum_i\delta_i=0,
               \mass(\delta)\le q,\ \delta\cdot T=0\}.
\]
Thus two ordered alphabets have the same profile precisely when the
increasing correspondence preserves all vanishing $q$-relations in both directions.

\begin{definition}\label{M-def:model}
Let $q\ge1$ be an integer. Two ordered alphabets $T=(t_0<\cdots<t_{m-1})$ and
$T'=(t'_0<\cdots<t'_{m-1})$ are \emph{$q$-equivalent} if
for every balanced vector $\delta\in\Z^m$ of mass at most $q$,
\[
 \delta\cdot T=0\quad\Longleftrightarrow\quad\delta\cdot T'=0
 \qquad(\mass(\delta)\le q)
\]
under the increasing correspondence $t_i\mapsto t'_i$. An
\emph{order-preserving $q$-model} of a real alphabet $T$ is a $q$-equivalent
increasing integer alphabet $T'$.
\end{definition}

This preserves equalities of two blocks of every equal length at most $q$.
Conversely, the positive and negative entries of a balanced vector
give the multiplicities of the letters in two sums, each with
$\mass(\delta)$ terms. Adding the same letters to both sums gives
the equivalent formulation with exactly $q$ terms. Only zero relations are
prescribed: signs of unequal block-sum differences need not be preserved.

\begin{remark}\label{M-rem:freiman}
In the language of Freiman \cite{M-Fre73} and Grynkiewicz \cite{M-Gry13}, $T'$
is an order-preserving $q$-model of $T$ exactly when the increasing bijection
$t_i\mapsto t'_i$ is a Freiman isomorphism of order $q$ between $T$ and $T'$:
by the padding just described, two sums of $q$ letters agree on $T$ if and only
if the corresponding sums agree on $T'$. At $q=2$ this is the order-preserving
Freiman isomorphism of Amirkhanyan, Bush and Croot \cite{M-ABC18}. Shin
\cite{M-Shin26} calls the $q$-zero profile under the increasing correspondence
the weak $q$-addition-table type.
\end{remark}

An integer alphabet is \emph{primitive} if, after translating its first
letter to $0$, the greatest common divisor of its letters is $1$. Its
\emph{diameter} is the difference between its last and first letters. For an
ordered real alphabet $T$ let $M_q(T)$ be the least diameter of an
order-preserving $q$-model of $T$, and for $m\ge1$ let
\[
 H_m(q)=\max\{M_q(T):\ T\text{ an ordered real $m$-alphabet}\}.
\]
The maximum exists because $M_q(T)$ depends only on the $q$-zero profile of
$T$, of which there are finitely many, once models are known to exist
(Theorem~\ref{M-thm:flag-model}). For $k\ge3$ let $L_k(q)$ be the least
diameter of a $k$-element integer alphabet with no nonzero $q$-relation;
such alphabets exist by Lemma~\ref{M-lem:append-qc-plus-one} below and the
four-letter construction in Theorem~\ref{M-thm:sharp-four-model}.

\Needspace{7\baselineskip}
\begin{lemma}\label{M-lem:elementary}
Let $q\ge1$.
\begin{enumerate}
\item Translating an integer model by an integer, multiplying it by a positive
 integer, and reflecting both alphabets ($t\mapsto-t$, which reverses the
 order of the letters) preserve $q$-equivalence.
\item Dividing a model that begins at $0$ by the greatest common divisor of
 its letters preserves $q$-equivalence and strict order and does not
 increase the diameter. In particular a model of least diameter is
 primitive, and every model can be replaced by a primitive one beginning at
 $0$ of no larger diameter.
\item If $T'$ is a $q$-model of $T$, then for every set of labels the
 corresponding sub-alphabet of $T'$ is a $q$-model of the corresponding
 sub-alphabet of $T$.
\item $H_m(q)$ is nondecreasing in $m$; moreover $H_1(q)=0$ and $H_2(q)=1$.
\end{enumerate}
\end{lemma}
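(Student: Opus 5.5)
Each of the four items reduces to how the zero set $\{\delta:\ \sum_i\delta_i=0,\ \mass(\delta)\le q,\ \delta\cdot T=0\}$ behaves under the operation in question. The key fact, used repeatedly, is that for a balanced $\delta$ one has $\delta\cdot(aT+b)=a(\delta\cdot T)$ for all real $a,b$, because $\sum_i\delta_i=0$ removes the translation.

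\emph{Item (1).} For translation by an integer and multiplication by a positive integer, the identity above with $a\neq0$ shows that $\delta\cdot T'$ and $\delta\cdot(aT'+b)$ vanish together. Integrality and strict increase are clearly preserved. For reflection, send both $T$ and $T'$ to $t\mapsto -t$ and reverse the indices. The reflected alphabets are $\widetilde T=(-t_{m-1}<\cdots<-t_0)$ and $\widetilde{T}'$ in the same way. Let $\rho$ be the permutation reversing indices, and define $\widetilde\delta_i=-\delta_{m-1-i}$. This map is a bijection of balanced vectors, it preserves mass (the positive and negative parts are swapped), and it satisfies $\widetilde\delta\cdot\widetilde T=\delta\cdot T$. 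Hence the profile of $\widetilde T$ is the image of the profile of $T$ under a map that depends only on $m$, and the same holds for $T'$. So $q$-equivalence transfers.

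\emph{Item (2).} Dividing by $g=\gcd$ is multiplication by $1/g$. By the identity, this preserves all zero relations and strict order. Integrality holds because the model begins at $0$ and every letter is divisible by $g$. The diameter becomes $D/g\le D$, with strict decrease when $g>1$, so a least-diameter model must have $g=1$ after translation. The last claim follows by first translating the first letter to $0$ using (1) and then dividing.

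\emph{Item (3).} Let $I$ be a set of labels. A balanced vector supported on the sub-alphabet indexed by $I$ extends by zeros to a balanced vector on all $m$ labels. This extension has the same mass and the same dot products with $T$ and with $T'$. The increasing correspondence on the sub-alphabet is the restriction of the full one, so the equivalence restricts.

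\emph{Item (4).} For monotonicity, take an $(m-1)$-alphabet $T$ with $M_q(T)=H_{m-1}(q)$. Append a real letter $t_m>t_{m-1}$ that is transcendental over $\Q(T)$, or simply any letter making the enlarged alphabet $T^+$ well defined. Every $q$-model of $T^+$ restricts, by (3), to a model of $T$ whose diameter is at most the diameter of the full model. Hence $H_m(q)\ge M_q(T^+)\ge M_q(T)$; note that any $T^+$ works here, and no genericity is needed. For $m=1$, the only balanced vector is $0$, so the single integer $0$ is a model of diameter $0$.

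For $m=2$, the relations are multiples of $(1,-1)$. Their dot product is $t_0-t_1\neq0$, so the profile is $\{0\}$. Thus $(0,1)$ is a model, and diameter at least $1$ is forced by strict integer increase.

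The main care point, though not a real obstacle, is the existence caveat in (4). The maximum defining $H_m(q)$ is only meaningful once models exist; for $m\le2$ they are exhibited directly above, and in general they are supplied by Theorem~\ref{M-thm:flag-model}. The monotonicity argument itself uses only (3).
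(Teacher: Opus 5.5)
Your proof is correct and follows the paper's argument: the identity $\delta\cdot(cT'+e)=c\,\delta\cdot T'$ for balanced $\delta$ gives (1) and (2), extending relations by zeros gives (3), and (4) follows by restricting a model of an enlarged alphabet together with the direct computations for one and two letters. Your explicit reindexing $\widetilde\delta_i=-\delta_{m-1-i}$ for the reflection, and your remark that (4) needs no gcd division because $M_q$ does not require primitive models, are small refinements of the paper's terser wording.
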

\begin{proof}
For a balanced $\delta$ and integers $c>0$, $e$ we have
$\delta\cdot(cT'+e)=c\,\delta\cdot T'$ and $\delta\cdot(-T)=-\delta\cdot T$,
which proves (1) and (2). A balanced relation on a sub-alphabet extends by
zeros to a balanced relation of the same mass on the full alphabet, which
proves (3). For (4), extend any smaller alphabet by distinct larger real
letters, model the extension and restrict to the old letters by (3); after
translating the first value to zero, gcd division can only reduce the
resulting diameter. A one-letter alphabet has the model $\{0\}$. The only
balanced relations on two letters are the multiples of $(1,-1)$, which vanish
on no alphabet of two distinct letters; hence $\{0,1\}$ is a $q$-model of
every two-letter alphabet, and no smaller diameter is possible.
\end{proof}

\subsection{Rank}
Translate $t_0$ to zero and write $x_i=t_i-t_0$ for $1\le i\le m-1$, so that
$0<x_1<\cdots<x_{m-1}$. Delete coordinate zero from a balanced row; since
$\delta_0=-\sum_{i\ge1}\delta_i$, the deleted row $(\delta_1,\ldots,\delta_{m-1})$
determines $\delta$, and $\delta\cdot T=\sum_{i\ge1}\delta_ix_i$. We speak of
the mass of a deleted row, meaning the mass of the balanced vector it
determines. Let $V\subseteq\Q^{m-1}$ be the rational span of the deleted rows
of \emph{all} $q$-relations vanishing on $T$, let $r=\dim V$ be the
\emph{complete short-relation rank} of $T$, and let $d=m-1-r$. The real kernel
$L=\ker_\R V\subseteq\R^{m-1}$ has dimension $d$ and contains the strictly
ordered nonzero vector $x$; hence $r\le m-2$ and $d\ge1$. Choose a basis of
$V$ among the deleted rows of vanishing $q$-relations, with masses
$h_1,\ldots,h_r\le q$, and put $H=\prod_{i=1}^rh_i$, with $H=1$ if $r=0$.

\subsection{The transfer-minor lemma}
The minor estimate below is Shin's transfer bound
\cite[Lemma~7.1]{M-Shin26}; we include the elementary proof to fix notation.
A row of mass $h$ is a sum of $h$ projected unit transfers, each of the form
$\pm e_i$ or $e_i-e_j$. Every square minor of a matrix of such transfers is
$0$ or $\pm1$. Indeed a row with at most one nonzero entry permits induction
by expansion; if every row has two entries, they are $+1$ and $-1$, and the
all-ones column vector lies in the kernel. Multilinearity therefore proves:
\begin{lemma}\label{M-lem:mass-minor}
For any square matrix whose rows are deleted rows of balanced relations of
masses $h_1,\ldots,h_s$, $|\det|\le\prod_{i=1}^sh_i$.
\end{lemma}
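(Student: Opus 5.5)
We would prove Lemma~\ref{M-lem:mass-minor} by multilinearity of the determinant in the rows, after checking that every matrix built from projected unit transfers has all square minors in $\{0,\pm1\}$. The first step is to write each deleted row as a sum of unit transfers. Let $\delta$ be a balanced vector of mass $h$. Its positive part and its negative part each have total $h$, so we pair them off into $h$ elementary vectors $e_a-e_b$ with $\delta_a>0$ and $\delta_b<0$, counted with multiplicity; their sum is $\delta$. Deleting coordinate $0$ sends each such $e_a-e_b$ to one of $e_a-e_b$, $e_a$, or $-e_b$, according as neither index, only $b$, or only $a$ equals $0$. Hence the deleted row of a relation of mass $h_i$ is a sum of exactly $h_i$ projected transfers $u_{i,1},\dots,u_{i,h_i}$, each of the form $\pm e_k$ or $e_k-e_l$.

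The second step is total unimodularity. Let $N$ be any matrix whose rows are projected transfers. We show by induction on $n$ that every $n\times n$ minor of $N$ lies in $\{0,\pm1\}$; the case $n=1$ is clear. Fix an $n\times n$ submatrix, with $n\ge2$. Each row of the submatrix is a restriction of a transfer, so it has at most two nonzero entries, and if it has two they are $+1$ and $-1$.
\begin{enumerate}
\item If some row of the submatrix is zero, the minor is $0$.
\item If some row has exactly one nonzero entry, that entry is $\pm1$. Expanding along this row expresses the minor as $\pm$ an $(n-1)\times(n-1)$ minor, which lies in $\{0,\pm1\}$ by induction.
\item Otherwise every row contains exactly one $+1$ and one $-1$. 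Then the all-ones vector lies in the kernel of the submatrix, so the minor is $0$.
\end{enumerate}

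The third step is expansion. Take rows $\rho_1,\dots,\rho_s$ with $\rho_i=\sum_{j=1}^{h_i}u_{i,j}$. Expanding the determinant multilinearly in each row gives
\[
 \det(\rho_1,\dots,\rho_s)=\sum_{j_1=1}^{h_1}\cdots\sum_{j_s=1}^{h_s}\det(u_{1,j_1},\dots,u_{s,j_s}).
\]
There are $\prod_i h_i$ terms. Each term is the determinant of a square matrix of projected transfers, so it lies in $\{0,\pm1\}$ by the second step. The triangle inequality then gives $|\det|\le\prod_i h_i$.

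The only step that needs real care is the first one. We must check that the deletion of coordinate $0$ still yields rows with at most two nonzero entries, and that when two appear they have opposite unit signs. The bookkeeping above establishes exactly this. Without it, the case analysis in the second step would not apply, because that analysis assumes the submatrix rows are restrictions of transfers. Everything else is routine.
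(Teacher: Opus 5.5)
Your proposal is correct and follows the paper's argument: write each deleted row of mass $h_i$ as a sum of $h_i$ projected unit transfers, show that every square minor of a matrix of transfers lies in $\{0,\pm1\}$ (expand along a row with at most one nonzero entry, otherwise use the all-ones kernel vector), and conclude by expanding multilinearly into $\prod_i h_i$ such terms. Your explicit pairing of positive and negative parts and your separate zero-row case fill in details the paper leaves implicit, and because your second step covers arbitrary minors, the same expansion also applies to the cofactors where the paper actually uses the lemma.
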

\begin{remark}\label{M-rem:mass-minor-rows}
In particular an ordering-boundary row (mass one) contributes a factor one
and a $q$-relation a factor at most $q$. Here an \emph{ordering-boundary
row} is one of the rows $e_1$ or $e_{i+1}-e_i$, which are the deleted rows
of the balanced relations $t_1-t_0$ and $t_{i+1}-t_i$ of mass one.
\end{remark}
Whenever $m-2$ independent integer
rows determine a one-dimensional kernel in $\R^{m-1}$, their signed maximal
minors give an integer generator of that line; dividing by the coordinate gcd
makes it primitive, and its last coordinate is bounded by the corresponding
cofactor in Lemma~\ref{M-lem:mass-minor}. We refer to the last coordinate
$x_{m-1}$ of a vector with nonnegative coordinates as its \emph{height}; for
a model beginning at $0$ the height is the diameter.

\Needspace{8\baselineskip}
\begin{corollary}[one-dimensional kernels]\label{M-cor:max-rank}
Suppose $r=m-2$, so that $d=1$. Then $L$ is a rational line, and $x$ is a
positive multiple of the primitive integer generator $w$ of $L$ with positive
last coordinate. Consequently $T$ is affinely equivalent to the primitive
integer alphabet $\{0,w_1,\ldots,w_{m-1}\}$, which is an order-preserving
model of $T$ preserving all balanced relations, and its diameter satisfies
$w_{m-1}\le H\le q^{m-2}$.
\end{corollary}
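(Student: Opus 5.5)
The plan is to construct the generator explicitly by Cramer's rule, identify $x$ with a multiple of it, and then bound its height with Lemma~\ref{M-lem:mass-minor}.

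\emph{The kernel line.} Choose the basis rows $v_1,\dots,v_{m-2}$ of $V$ as in the setup. These are deleted rows of vanishing $q$-relations, with masses $h_1,\dots,h_{m-2}$. Form the integer $(m-2)\times(m-1)$ matrix $A$ with rows $v_i$. Since $r=m-2$, $A$ has full row rank, so its kernel $L$ is one-dimensional and defined over $\Q$. Let $c_j$ be $(-1)^j$ times the minor of $A$ obtained by deleting column $j$. By the Laplace expansion of the $(m-1)\times(m-1)$ matrix formed by $A$ with one of its own rows repeated, the vector $c$ satisfies $Ac=0$; it is nonzero because some maximal minor is nonzero. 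Hence $c$ spans $L$.

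\emph{Identifying $x$.} Divide $c$ by the gcd of its coordinates to get a primitive generator. Fix its sign so that the last coordinate is positive; this is possible since $x\in L$ is nonzero with $x_{m-1}>0$. Call the result $w$. Then $x=\lambda w$ for some real $\lambda\ne0$, and comparing last coordinates gives $\lambda>0$. Therefore $0<w_1<\dots<w_{m-1}$, and $T$ is the image of $\{0,w_1,\dots,w_{m-1}\}$ under $t\mapsto t_0+\lambda t$.

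\emph{All relations are preserved.} For any balanced $\delta$,
\[
\delta\cdot T=\lambda\,\delta\cdot(0,w).
\]
So every balanced relation, of any mass, vanishes on one alphabet exactly when it vanishes on the other. In particular the integer alphabet is an order-preserving $q$-model of $T$. It is primitive by construction, and its diameter is $w_{m-1}$.

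\emph{Height bound.} We have $w_{m-1}\le|c_{m-1}|$, and $c_{m-1}$ is, up to sign, the determinant of the square matrix of the rows $v_i$ restricted to columns $1,\dots,m-2$. Those restricted rows are not themselves deleted rows of balanced relations. To apply Lemma~\ref{M-lem:mass-minor}, apply its argument directly: each $v_i$ is a sum of $h_i$ transfers $\pm e_a$ or $e_a-e_b$. Dropping the last column turns each transfer into a vector that is either a transfer or $0$ (when $e_{m-1}$ drops to $0$), or a single $\pm e_a$. By multilinearity, the minor is a sum of at most $\prod h_i$ determinants of transfer matrices. Each of these is $0$ or $\pm1$ by the total-unimodularity argument preceding the lemma; equivalently, the column-deleted transfers are deleted rows of mass-one balanced vectors on fewer letters. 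Hence $w_{m-1}\le H=\prod h_i\le q^{m-2}$.

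The only point needing care is this last check, that truncating a column keeps each transfer a unit transfer, so that the cofactor bound genuinely applies.
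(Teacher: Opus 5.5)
Your proof is correct and follows the paper's argument: the cofactor vector of the $m-2$ selected rows spans $L$, $x$ is a positive multiple of the primitive generator $w$, every balanced relation scales by the same positive factor, and the last cofactor is bounded by Lemma~\ref{M-lem:mass-minor}. Your extra check on the column-deleted rows is sound, and the paper applies the lemma to the cofactor without comment. The truncated rows are in fact deleted rows of balanced relations on $m-1$ letters: merge letter $m-1$ into letter $0$, which gives mass at most $h_i$. So the lemma applies to them directly, and the multilinearity argument behind it already covers this.
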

\begin{proof}
The $r=m-2$ selected rows have a one-dimensional common kernel, which
contains $x$ and is spanned by their cofactor vector; so $L$ is a rational
line and $x$ is a positive real multiple of $w$. A balanced relation vanishes
on $x$ if and only if it vanishes on $w$. The last coordinate of $w$ is an
$(m-2)\times(m-2)$ cofactor, possibly divided by a common gcd, and
Lemma~\ref{M-lem:mass-minor} bounds it by $H\le q^{m-2}$.
\end{proof}

For four letters, two independent vanishing $q$-relations thus have a
one-dimensional common kernel whose primitive integer generator has last
coordinate at most $q^2$; a strictly ordered real vector in that kernel fixes
the positive orientation. The rank-two bound itself is sharp:
$\{0,q,q+1,q^2\}$ has the independent mass-$q$ relations $qa=c$ and
$qb=a+c$. In general the value $q^{m-2}$ is sharp for the rank $m-2$: the
alphabet $\{0,1,q,\ldots,q^{m-2}\}$ has the $m-2$ independent mass-$q$
relations $q\cdot q^{j}=q^{j+1}$, whose common kernel is the line spanned by
$(1,q,\ldots,q^{m-2})$, so that every order-preserving model has diameter at
least $q^{m-2}$. This is Shin's Corollary~7.3 \cite{M-Shin26}.

\section{Flags in the ordered relation kernel}\label{M-sec:flag}

Let $V$ be the rational span of \emph{all} vanishing $q$-relation rows of an
ordered real alphabet, after deleting coordinate zero, as in
Section~\ref{M-sec:prelim}. Write $r=\dim V$ and $d=m-1-r$. Choose a basis
from those vanishing $q$-relation rows, with masses $h_1,\ldots,h_r\le q$;
put $H=\prod_ih_i$, with $H=1$ for an empty basis.

In $L=\ker_{\R}V$, consider
\[
 K_0=L\cap\{0\le x_1\le\cdots\le x_{m-1}\}.
\]
The original strictly ordered vector makes this a rational pointed cone of
dimension $d$ in $L$. Height $x_{m-1}$ is positive on every nonzero point.
A \emph{face} of $K_0$ is obtained by turning some of the ordering
inequalities $x_1\ge0$, $x_{i+1}-x_i\ge0$ into equalities; the span of a face
of dimension $j$ is defined inside $L$ by $d-j$ independent original ordering
boundaries. We subdivide each face by all $q$-relation hyperplanes, ignoring
restrictions that vanish identically on its span; a \emph{chamber} of a face
$G$ is a closed cone of this finite central subdivision that has the same
dimension as $G$. Every ordering functional and every $q$-relation functional
not identically zero on $\spn G$ has a constant weak sign on a chamber of
$G$, and is nonzero on its relative interior.

\begin{lemma}[chamber extension]\label{M-lem:chamber-extension}
Let $G$ be a face of $K_0$ of dimension at least two, let $F$ be a facet of
$G$, and let $C_F$ be a chamber of $F$. Then there is a chamber $C$ of $G$
with $C\cap F=C_F$; in particular $C_F$ is a facet of $C$.
\end{lemma}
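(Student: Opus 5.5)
We argue locally near a relative-interior point of $C_F$. Since $F$ is a facet of $G$, there is a single ordering functional $\ell$ (one of $x_1$ or $x_{i+1}-x_i$) that vanishes on $\spn F$, is not identically zero on $\spn G$, and satisfies $\ell\ge0$ on $G$. Also $\spn F=\spn G\cap\{\ell=0\}$, so $\dim\spn F=\dim\spn G-1$. Fix a point $p$ in the relative interior of $C_F$. By the description of chambers of $F$, every ordering functional and every $q$-relation functional $\phi$ that is not identically zero on $\spn F$ satisfies $\phi(p)\neq0$. Pick any vector $v\in\spn G$ with $\ell(v)>0$ (for instance a relative-interior point of $G$). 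For small $\varepsilon>0$, consider $p_\varepsilon=p+\varepsilon v$.

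\textbf{Step 1: $p_\varepsilon$ is generic in $G$.} There are finitely many relevant functionals on $\spn G$: the ordering functionals and the $q$-relation functionals, each taken modulo those vanishing identically on $\spn G$. Each falls into one of three kinds.
\begin{enumerate}
\item $\phi$ is nonzero on $\spn F$. Then $\phi(p)\ne0$, so by continuity $\phi(p_\varepsilon)$ has the sign of $\phi(p)$ for small $\varepsilon$.
\item $\phi$ vanishes on $\spn F$ but not on $\spn G$. Since $\spn F$ is a hyperplane of $\spn G$, $\phi$ is a nonzero multiple $c\,\ell$ restricted to $\spn G$. Hence $\phi(p_\varepsilon)=c\,\varepsilon\,\ell(v)\ne0$, with the sign of $c$.
\item $\phi$ vanishes identically on $\spn G$. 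Then it is ignored.
\end{enumerate}
The finitely many functionals of kind (1) impose finitely many smallness conditions on $\varepsilon$, so a single small $\varepsilon>0$ works for all of them. Moreover $p_\varepsilon\in G$: the ordering functionals of kind (1) are positive at $p$ (it is interior to $F$), and $\ell(p_\varepsilon)>0$. Therefore $p_\varepsilon$ lies in the relative interior of a unique chamber $C$ of $G$. This chamber is the closure of the sign cell in which every relevant functional has the strict sign it takes at $p_\varepsilon$.

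\textbf{Step 2: $C\cap F=C_F$.} We prove both inclusions.

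For $C\cap F\subseteq C_F$, take $y\in C\cap F$. Every functional of kind (1) satisfies $\operatorname{sign}\phi(y)\in\{0,\operatorname{sign}\phi(p)\}$, because $y\in C$ and its weak sign on $C$ agrees with the sign at $p_\varepsilon$, which is the sign at $p$. Since $y\in\spn F$ and these are exactly the functionals defining the subdivision of $F$, $y$ lies in the closed cone cut out by the signs of $p$. That cone is $C_F$, because $C_F$ is full-dimensional in $F$ and equals the closure of its sign cell.

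For $C_F\subseteq C$, take $y\in C_F$. Then $y+\varepsilon' v$ lies in the open cell of $C$ for all sufficiently small $\varepsilon'>0$. Kind (1) functionals are handled by convexity: use $y_t=(1-t)y+tp$ with $t>0$, which is in the relative interior of $C_F$, and then perturb. Letting $\varepsilon'\to0$ gives $y\in C$. Finally, $C\cap\{\ell=0\}\subseteq F$, and since $\ell\ge0$ on $C$, the set $C_F$ is a face of $C$ of codimension one, i.e. a facet.

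The main obstacle is the reverse inclusion $C_F\subseteq C$. The argument there must use that the sign vector of $C$ is the same for all relative-interior points of $C_F$, not only for $p$. Approximating by relative-interior points of $C_F$ is what secures this, and closedness then finishes the argument.
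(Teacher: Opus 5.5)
Your proposal is correct and follows the same route as the paper. Both take $p$ in the relative interior of $C_F$ and perturb it into $G$, and both use that any functional vanishing on $\spn F$ but not on $\spn G$ is a nonzero multiple of the facet functional $\ell$ there; hence the perturbed point lies in an open chamber of $G$ whose inequalities, restricted to $F$, are exactly those of $C_F$. Your two-inclusion check spells out what the paper states in one line. The inclusion $C_F\subseteq C$ is more direct than your approximation argument (which, as written, needs both $\varepsilon'\to0$ and $t\to0$): every $y\in C_F$ already satisfies the weak sign conditions of the kind~(1) functionals, and the kind~(2) functionals vanish at $y$, so $y$ lies in the closed cone $C$.
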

\begin{proof}
Take $p$ in the relative interior of $C_F$ and $y$ in the relative interior
of $G$, outside every nonidentically-zero short hyperplane there. For small
$\varepsilon>0$, $p+\varepsilon y$ keeps every short-relation sign nonzero at
$p$. A short row vanishing at $p$ must vanish on the whole span of $F$;
otherwise it would cut the interior of its chamber. Its restriction to the
span of $G$ is either zero or a nonzero multiple of the facet functional,
hence is nonzero at $y$ in the latter case. Thus the perturbation lies inside
a chamber $C$ of $G$ whose restriction to $F$ is exactly $C_F$: the rows
vanishing on $\spn F$ impose only equalities on $F$, and every other row has
the same nonzero sign at $p+\varepsilon y$ as at $p$, so the defining
inequalities of $C$ restricted to $F$ are exactly those of $C_F$. In
particular $C_F$ is a facet of $C$.
\end{proof}

\begin{theorem}[Flag bound]\label{M-thmD}\label{M-thm:flag-model}
For $m\ge2$ and $q\ge2$, the alphabet has an order-preserving $q$-equivalent
primitive integer model of diameter at most
\[
 H\sum_{j=0}^{d-1}q^j.
\]
Here the basis is chosen from the vanishing integer $q$-relation rows.
The masses in $H$ belong to their full balanced vectors. The space $V$
contains all vanishing $q$-relations.
\end{theorem}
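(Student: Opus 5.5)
We will build a flag of faces $G_1\subset G_2\subset\cdots\subset G_d=K_0$ with $\dim G_j=j$, together with compatible chambers $C_1\subset\cdots\subset C_d$, and take as model the vector
\[
 w=\sum_{j=1}^{d} q^{\,j-1}\rho_j ,
\]
where each $\rho_j$ is a primitive integer generator of an extreme ray of $C_j$ with height at most $H$. The weights $q^{j-1}$ are chosen so that no short relation can cancel across different layers of the flag.

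\emph{Flag and chambers.} Start with $K_0$ of dimension $d$ and repeatedly pass to a facet, each time turning one further ordering inequality into an equality. This gives faces $G_j$ of dimension $j$, and $G_1$ is a ray. Since $G_1$ has dimension one, it is its own unique chamber $C_1$. Applying Lemma~\ref{M-lem:chamber-extension} inductively then yields chambers $C_j$ of $G_j$ with $C_j\cap G_{j-1}=C_{j-1}$. Let $\rho_j$ be the primitive integer generator of the extreme ray of $C_j$ that does not lie in $G_{j-1}$.

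This ray is cut out inside $L$ by $d-1$ hyperplanes, namely ordering boundaries and $q$-relation hyperplanes. Together with the $r$ basis rows of $V$, these give $m-2$ rows. Using Lemma~\ref{M-lem:mass-minor} and the cofactor remark, the height of $\rho_j$ is bounded by $H$ times the product of the masses of the extra rows. This estimate is where the main obstacle lies: we need height $\le H$, not $\le Hq^{d-1}$.

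To get the sharper bound, we would aim to choose each defining ray so that it is cut by ordering boundaries only, each of mass one. The natural candidate is the extreme ray of $C_j$ lying on the ordering boundary that defines $G_{j-1}$, together with the boundaries defining $G_j$. If that fails, the fallback is a recursive choice: pick the $G_j$ so that $G_1$ is an extreme ray of $K_0$, cut out purely by ordering boundaries, and accept a weaker per-step bound. This step will require the most care.

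\emph{Verification.} First, $w$ lies in the interior-type region we need. Each ordering functional that is positive on $x$ must be positive on $w$. Every ordering boundary not vanishing on $L$ is positive somewhere on $K_0=G_d$, hence nonnegative on all $\rho_j$ and positive on some $\rho_j$, so $w$ is strictly increasing. Note that $x_1>0$ as well.

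Second, no new relations appear. Take a $q$-relation $\delta\notin V$, and let $j$ be the largest index with $\delta$ not identically zero on $\spn G_j$. It cannot vanish on $\spn G_d=L$, so such a $j$ exists. By chamber compatibility, $\delta$ has a constant weak sign on $C_j$, and it is nonzero on $\rho_j$ unless it vanishes on $\spn G_{j-1}$. So let $i$ be the least index with $\delta(\rho_i)\ne0$; then $\delta$ vanishes on $\rho_1,\dots,\rho_{i-1}$. The issue is the higher terms. Reorganizing, we would instead weight as $w=\sum_j q^{d-j}\rho_j$, so the earliest nonzero term dominates after reduction modulo $q$.

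The cleanest route is to exploit integrality. For $k<j$ each $\delta(\rho_k)$ is an integer, and $|\delta(\rho_k)|$ is controlled via the mass of $\delta$ and the constant sign on the chambers. We would argue that all nonzero $\delta(\rho_k)$ share one sign by compatibility of the nested chambers, since $\rho_k\in C_k\subset C_j$. Then $\delta(w)$ is a sum of same-signed terms, at least one of them nonzero, so $\delta(w)\ne0$ and no weights are even needed for nonvanishing.

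The weights $q^{j-1}$ are then needed only for strict ordering and primitivity considerations, or may be replaced by $1$. With that choice, the height of $w$ is at most $\sum_j q^{j-1}H$, which is the claimed bound; finally we divide by the gcd using Lemma~\ref{M-lem:elementary}.
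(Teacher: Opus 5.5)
\textbf{There is a genuine gap: the height estimate for the rays $\rho_j$.} Your flag, the compatible chambers from Lemma~\ref{M-lem:chamber-extension}, and your final verification match the paper's proof and are correct. That verification runs: all $\rho_k$ lie in the single chamber $C_d$; every ordering functional, and every $q$-relation functional not vanishing on $L$, has constant weak sign there; the $\rho_k$ span $L$. Hence the \emph{unweighted} sum $\sum_k\rho_k$ is strictly ordered and has exactly the original profile.

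The gap is the height estimate. You flag it as the main obstacle but do not resolve it, and your suggested route is false. For $j\ge2$ you cannot in general choose $\rho_j$ of height at most $H$, because the extreme ray of $C_j$ leaving $G_{j-1}$ typically lies on a $q$-relation wall. Take $\{0,\theta,1,q\}$: here $L=\{x_3=qx_2\}$, $H=q$ and $d=2$.
\begin{itemize}
\item The chamber adjacent to the ordering ray $(0,1,q)$ has its other ray $(1,q,q^2)$ on the mass-$q$ wall $qx_1=x_2$.
\item The chamber adjacent to $(1,1,q)$ has its other ray $(q-1,q,q^2)$ on the mass-$q$ wall $qx_1=(q-1)x_2$.
\end{itemize}
Both new rays have height $qH$. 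No flag can do better: two rays of height at most $H$ would give, by your own sign argument, a model of diameter at most $2q<q(q+1)$. That contradicts the sharpness in Theorem~\ref{M-thm:sharp-four-model}.

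So your closing height claim does not follow. With unit weights and your claimed per-ray bound $H$, you would get $dH$, which is false. The bound you actually establish, $\height(\rho_j)\le Hq^{d-1}$, only gives $dq^{d-1}H$, the weaker estimate of the kind recalled in Remark~\ref{M-rem:flag-attribution}. With the weights $q^{j-1}$ and the correct per-ray bounds, the total would be $H\sum_j q^{2(j-1)}$, which is too large.

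\textbf{The missing idea is a graded count of the defining rows.} The ray $\rho_j$ lies in $\spn G_j$, which is cut out inside $L$ by $d-j$ independent ordering rows, each of mass one. Inside that $j$-dimensional span, as an extreme ray of the pointed cone $C_j$, it lies on only $j-1$ further independent active walls. Each of these is an ordering or $q$-relation row of mass at most $q$. Together with the $r$ basis rows this makes $m-2$ rows, and Lemma~\ref{M-lem:mass-minor} gives
\[
\height(\rho_j)\le H\cdot1^{d-j}\cdot q^{j-1}=Hq^{j-1}.
\]
Taking $w=\sum_j\rho_j$ with no weights and dividing by the gcd yields exactly $H\sum_{j=0}^{d-1}q^j$. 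This is the paper's argument: the layers are graded by the heights of the rays themselves, not by coefficients in the sum.
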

The proof has three steps. First choose compatible chambers along a full flag
of ordering faces. Next select one new integer ray at each dimension and bound
its last coordinate by the transfer-minor estimate. Finally add the selected
rays. They lie in one chamber, so a relation cannot vanish by cancellation;
because they span the relation kernel, no additional short relation vanishes
on their sum. The same reasoning makes all ordering gaps strictly positive.

\begin{proof}
Choose a complete flag of faces
\[
 F_1\subset F_2\subset\cdots\subset F_d=K_0,
 \qquad\dim F_j=j.
\]
Each $F_{j-1}$ is a facet of $F_j$, and the span of $F_j$ is defined
inside $L$ by $d-j$ independent original ordering boundaries. Subdivide each
face by all $q$-relation hyperplanes, ignoring restrictions that vanish
identically on its span.

By Lemma~\ref{M-lem:chamber-extension}, applied inductively, we can choose
full-dimensional closed chambers $C_j\subseteq F_j$ with $C_1=F_1$ and
$C_j\cap F_{j-1}=C_{j-1}$. In particular $C_{j-1}$ is a facet of $C_j$.

Choose the primitive positive ray generator $u_1$ of $F_1$. For each
$j\ge2$, choose a primitive extreme-ray generator $u_j$ of $C_j$ outside
the span of $F_{j-1}$. Such a ray exists because $C_j$ spans $F_j$.
All these rays lie in the final chamber, and
\[
 \spn_{\R}(u_1,\ldots,u_j)=\spn_{\R}F_j.
\]
The $j$-th ray is specified by the $r$ original rows, $d-j$ independent
ordering rows, and $j-1$ further active short-relation or ordering rows.
Indeed, an extreme ray of the $j$-dimensional pointed cone $C_j$, which is
cut out inside $\spn F_j$ by finitely many ordering and $q$-relation
halfspaces, lies on $j-1$ independent hyperplanes of that defining list: if
the rows active at a nonzero point of the ray had a common kernel of
dimension greater than one inside $\spn F_j$, a small perturbation in both
directions within that kernel would preserve every inactive strict
inequality, and the point would be the midpoint of two noncollinear points
of $C_j$, contradicting extremality.
These $m-2$ independent rows leave a line. Cofactors and
Lemma~\ref{M-lem:mass-minor} give
\[
 \height(u_j)\le Hq^{j-1}.
\]
The first ordering rows have mass one, and each additional row has mass
at most $q$. Dividing the cofactor gcd only decreases height.

Put $w=\sum_ju_j$. Each ordering functional and each oriented
nonidentically-zero $q$-relation functional is nonnegative on $C_d$. If
one vanished on $w$, it would vanish on every $u_j$, hence on all of $L$,
contradicting strict ordering or its nonzero restriction. Thus $w$ is
strictly ordered, all original short relations vanish on it, and every
other short relation is nonzero. Dividing its coordinate gcd gives a
primitive model with no larger diameter. Summing the height bounds
proves the theorem.
\end{proof}

The rays need only be linearly independent over $\R$; they are
not assumed to form a unimodular lattice basis. The selected chamber
may have different nonzero comparison signs from the original alphabet.

\begin{remark}\label{M-rem:flag-attribution}
The skeleton of this proof (kernel cone, primitive extreme rays via
cofactors, heights from Lemma~\ref{M-lem:mass-minor}, a sum of rays in the
relative interior, division by the gcd) is that of Shin's Theorem~7.2
\cite{M-Shin26}, who credits it to Amirkhanyan, Bush and Croot \cite{M-ABC18};
for the sign-preserving problem that theorem gives $dq^{d-1}\prod_ip_i$ with
the birth degrees $p_i$. The ordering-face flag supplies compatible chambers and graded heights
$Hq^{j-1}$, replacing $dq^{d-1}$ by $\sum_{j<d}q^j$. The coefficients agree
for $d=1$, and the latter is smaller for $d\ge2$. This improvement uses
the freedom to change nonzero comparison signs: the chambers $C_j$ are
chosen along the flag rather than around the original alphabet. Since $H\le q^r$,
the theorem gives in particular
\begin{equation}\label{M-eq:rank-sensitive-sum}
 \diam\le\sum_{j=r}^{m-2}q^j\le\sum_{j=0}^{m-2}q^j .
\end{equation}
At $q=2$ the dimension count $d=m-1-r$ is treated by
Konyagin and Lev \cite[Theorem~4]{M-KL00}; compare also Freiman's classical
rank-one estimates \cite{M-Fre73}. For general background on Freiman
homomorphisms see \cite{M-Gry13}.
\end{remark}

\section{Two-dimensional kernels}\label{M-sec:corank}

\begin{theorem}[Two-dimensional kernels]\label{M-thmE}\label{M-thm:corank-model}
If $m\ge3$, $q\ge2$ and the complete short-relation rank is $r=m-3$,
there is an order-preserving primitive $q$-model of diameter at most
\[
 (q+1)H\le q^{m-3}(q+1).
\]
The universal bound $q^{m-3}(q+1)$ under this rank hypothesis is sharp.
\end{theorem}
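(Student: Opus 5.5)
The upper bound will follow from the Flag bound (Theorem~\ref{M-thm:flag-model}), and sharpness from an explicit alphabet containing one irrational letter. Under the hypothesis $r=m-3$ we have $d=m-1-r=2$, so Theorem~\ref{M-thm:flag-model} gives a primitive order-preserving $q$-model of diameter at most
\[
 H\sum_{j=0}^{1}q^j=(q+1)H .
\]
Since each basis mass satisfies $h_i\le q$, we get $H\le q^{r}=q^{m-3}$, which is the stated bound. No new chamber argument is needed for this part. The extra content of the theorem is the sharpness of $q^{m-3}(q+1)$ as a universal bound under the rank hypothesis.

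For sharpness I would use the alphabet
\[
 T=(0,\theta,1,q,q^2,\ldots,q^{m-3}),
\]
which for $m=3$ reduces to $(0,\theta,1)$. The first step is to verify that its complete short-relation rank is exactly $m-3$. The relations $q\cdot q^{j}=q^{j+1}+(q-1)\cdot 0$ have mass $q$ and vanish on $T$ for $j=0,\ldots,m-4$. They are independent, which gives $r\ge m-3$. Conversely, any vanishing balanced relation has zero coefficient on $\theta$, since otherwise $\theta$ would be rational. Therefore $V$ is contained in the span of relations on the letters $0,1,q,\ldots,q^{m-3}$, which span a space of dimension $m-3$ because those letters lie on a rational line. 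Hence $r=m-3$.

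The second step is a lower bound on the diameter of any model $S=(0,s_1,s_2,\ldots,s_{m-1})$, which we may take to begin at $0$ by Lemma~\ref{M-lem:elementary}. The $m-3$ preserved relations force $s_{j+2}=q^{j}s_2$ for $0\le j\le m-3$, so the diameter equals $q^{m-3}s_2$. By Lemma~\ref{M-lem:elementary}(3), the sub-alphabet $(0,s_1,s_2)$ must be a relation-free $q$-model of $(0,\theta,1)$.

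Now suppose $0<a<b$ are integers with $b\le q$. The balanced vector with coefficients $(b-a,-b,a)$ on $(0,a,b)$ has mass $b\le q$ and vanishes, since $-b\cdot a+a\cdot b=0$. So such a pair is never relation-free, and therefore $s_2\ge q+1$. This gives diameter at least $q^{m-3}(q+1)$, matching the upper bound. (An explicit model attaining it, e.g.\ $s_1=1,s_2=q+1$, is not needed, since the upper bound already guarantees existence.)

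The step requiring most care is the rank computation: one must check that no short relation mixing $\theta$ with other letters vanishes, which is the irrationality argument above. One must also confirm that the relations among $0,1,\ldots,q^{m-3}$ exhaust a space of dimension exactly $m-3$ rather than less. Beyond that the argument is routine.
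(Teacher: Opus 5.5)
Your proposal is correct and follows the paper's proof essentially verbatim. The upper bound is the flag theorem with $d=2$. Sharpness comes from $\{0,\theta\}\cup\{q^j:0\le j\le m-3\}$: the power relations force any model to be $\{0,a\}\cup\{q^jb\}$, and the relation $(b-a,-b,a)$ (the paper's $(a-b,b,-a)$ up to sign) rules out $b\le q$.

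The only difference is in attainment. You deduce it from the upper bound, which is logically sufficient. The paper additionally exhibits the explicit equality model $\{0,1\}\cup\{q^j(q+1)\}$ and verifies it modulo $q+1$, and it reuses that model later, for instance $\{0,1,3,6,12,24\}$ for $H_6(2)$.
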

\begin{proof}
The upper bound is Theorem~\ref{M-thm:flag-model} with $d=2$. For sharpness
fix irrational $0<\theta<1$ and take
\[
 T=\{0,\theta\}\cup\{q^j:0\le j\le m-3\}.
\]
Its power relations have mass $q$ and rank $m-3$, which is the full rational
relation rank by independence of $\theta$ and $1$. Every increasing integer
model must be $\{0,a\}\cup\{q^jb:0\le j\le m-3\}$ with $0<a<b$.
If $b\le q$, the extra balanced relation $(a-b,b,-a,0,\ldots,0)$ has
mass $b$ and vanishes on the model but not on the irrational original.
Hence $b\ge q+1$ and the diameter is at least $q^{m-3}(q+1)$.

Equality is attained by
\[
 \{0,1\}\cup\{q^j(q+1):0\le j\le m-3\}.
\]
Modulo $q+1$, a short relation forces the coefficient of $1$ to be zero,
since its absolute value is at most $q$. Dividing the remaining equation
by $q+1$ gives exactly the original equation on the powers of $q$. Thus
the complete short zero profile is preserved, including $m=3$.
\end{proof}

An alternative proof by lattice subdivision, and the obstruction to
the same lattice step in dimension three, are given in
Supplement Section~\ref{S-supp:lattice}. The flag proof above is
sufficient for all subsequent results.

\begin{remark}\label{M-rem:corank-attribution}
The witness $\{0,\theta,1,q,\ldots,q^{m-3}\}$ is Shin's set
$P_{q,m-1}=\{0\}\cup\{q^j:0\le j\le m-3\}$ \cite[Corollary~7.3]{M-Shin26} with one
irrational letter inserted. The integer equality model
$\{0,1,q+1,q(q+1),\ldots\}$ also occurs in his Theorem~9.1 at $r=k-3$,
$\ell=h+1$, which realizes the prescribed active rank and sumset cardinality.
The argument above establishes the sharp ordered equality-model bound
$q^{m-3}(q+1)$, including the lower bound for every model of the irrational
witness. For the sign-preserving problem, his Theorem~7.2 at $d=2$ gives
$2q\prod_ip_i$; the method attribution is in
Remark~\ref{M-rem:flag-attribution}.
\end{remark}

\section{The sharp bound for every four-letter alphabet}\label{M-sec:four}

\begin{theorem}[Theorem~\ref{M-thmA}]\label{M-thm:sharp-four-model}
Every ordered four-element real alphabet is order-preservingly $q$-equivalent
to a primitive integer alphabet $\{0,a,b,c\}$ with
\[
0<a<b<c\le q(q+1)\qquad(q\ge2).
\]
The bound is sharp for every $q$. For $q=1$ the sharp bound is three.
\end{theorem}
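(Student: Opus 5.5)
The plan is to split according to the complete short-relation rank $r$ of the ordered four-letter alphabet. Since $m=4$, Section~\ref{M-sec:prelim} gives $r\le m-2=2$, so $d=3-r\in\{1,2,3\}$ and there are three cases. I will bound the diameter of a model in each case by $q(q+1)$. Sharpness then comes from the irrational witness already built in Theorem~\ref{M-thm:corank-model}.

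\emph{Rank two} ($d=1$). Corollary~\ref{M-cor:max-rank} applies directly. The alphabet is affinely equivalent to the primitive integer alphabet $\{0,w_1,w_2,w_3\}$, and its diameter satisfies $w_3\le H\le q^2<q(q+1)$.

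\emph{Rank one} ($d=2$). This is $r=m-3$ with $m=4$. Theorem~\ref{M-thm:corank-model} gives a primitive model of diameter at most $(q+1)H\le q(q+1)$, since $H=h_1\le q$.

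\emph{Rank zero} ($d=3$). Here no nonzero $q$-relation vanishes on $T$. Hence any relation-free increasing integer four-set is a model, and $M_q(T)\le L_4(q)$. The flag bound of Theorem~\ref{M-thm:flag-model} only gives $1+q+q^2$, which is too large by one. So this case needs an explicit construction: an integer set $\{0,a,b,c\}$ with $c\le q(q+1)$ and no nonzero balanced relation of mass at most $q$. I expect this to be the main obstacle. My first move is to use the relation-free four-set diameter $L_4(q)$ from \cite{M-Shin26}, which the introduction lists as a known input, and check that $L_4(q)\le q(q+1)$.

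Failing that, I would search for a modular construction in the style of the equality model in Theorem~\ref{M-thm:corank-model}. The idea is to reduce modulo a suitable modulus to kill one coefficient, then argue on the remaining three letters. The three-letter problem has $L_3$-type bound $q+1$, which suggests a two-level structure with total height $q(q+1)$. Caution: the naive candidate $\{0,1,q+1,q(q+1)\}$ fails. After reduction modulo $q+1$ the remaining letters become $\{0,1,q\}$, which carries the mass-$q$ relation $q\cdot1=q+(q-1)\cdot0$. So the letters must be arranged so that the reduced three-letter set is itself relation-free, for example by replacing the middle letters with ones whose quotient set is $\{0,1,q+1\}$-like. I would verify candidates by the same argument: a coefficient of absolute value at most $q$ is killed modulo $q+1$, and the quotient is then handled by the three-letter case.

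For the lower bound, take the witness $\{0,\theta,1,q\}$ from the proof of Theorem~\ref{M-thm:corank-model} with $m=4$. There any model has the form $\{0,a,b,qb\}$, and $b\le q$ forces the extra relation $(a-b,b,-a,0)$ of mass $b$. Hence the diameter is at least $q(q+1)$, and $H_4(q)=q(q+1)$. For $q=1$ every balanced vector of mass one is a difference $t_i-t_j$, so any strictly increasing integer alphabet is a model. Then $\{0,1,2,3\}$ gives diameter three, and four distinct integers force diameter at least three.
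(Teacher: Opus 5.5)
\textbf{Verdict: there is a genuine gap in the rank-zero case.} Your handling of ranks two and one, the sharpness witness $\{0,\theta,1,q\}$ with the spurious relation $(a-b,b,-a,0)$, and the case $q=1$ all match the paper's proof.

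You correctly identify rank zero as the only case where the general theorems fall short, since the flag bound gives $1+q+q^2$, but you then leave it open. The statement needs a concrete relation-free increasing integer four-set of diameter at most $q(q+1)$. The proposal supplies neither such a set nor the value of $L_4(q)$ that you propose to compare with $q(q+1)$.

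\emph{How your first route would close the gap.} It works once it is actually carried out. Shin's Theorem~10.3 gives $L_4(q)=\binom{q+2}{2}+[q\text{ odd}]$. Since $q(q+1)-\binom{q+2}{2}=(q+1)(q-2)/2$, which is $0$ at $q=2$ and at least $2$ for $q\ge3$, you get $L_4(q)\le q(q+1)$ for every $q\ge2$. As written, however, you only say you would ``check'' an inequality whose left side you never state.

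\emph{What the paper does instead.} It avoids the external dependence by exhibiting $T_q=\{0,1,b,b+d\}$, with $b=\binom{q+1}{2}+1$ and $d=q$ or $q+1$ according as $q$ is even or odd. It then proves directly that no nonzero relation $(\delta_0,x,y,z)$ of mass at most $q$ vanishes on $T_q$:
\begin{itemize}
\item Put $u=y+z$. If $u=0$, the relation costs mass $(d+1)|z|>q$.
\item Otherwise one derives $u(b+d-1)\le dq$, which forces $u=1$.
\item A split on whether $w=-z$ is at most $\lfloor q/2\rfloor$ then gives mass at least $q+1$ in both subcases.
\end{itemize}

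\emph{Why your fallback cannot reach the bound.} Suppose you kill one coefficient modulo $M$. Then the other three letters lie in a single residue class modulo $M$, with the remaining letter's offset coprime to $M$. You need $M\ge q+1$, because the killed coefficient can have absolute value $q$. After translation, the three congruent letters are $M$ times a three-set $\{0,b',c'\}$ with $0<b'<c'$. That three-set must itself be relation-free, since any relation on it lifts to the four-letter model with coefficient zero on the remaining letter. But $\{0,b',c'\}$ carries a primitive relation of mass $c'/\gcd(b',c')$, so relation-freeness forces $c'\ge q+1$. Hence the diameter is at least $M(q+1)\ge(q+1)^2>q(q+1)$. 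Your ``$\{0,1,q+1\}$-like'' quotient produces exactly $\{0,1,q+1,(q+1)^2\}$.

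So the two-level heuristic overshoots by $q+1$. The missing idea is a non-modular ruler such as $T_q$: two letters at the bottom, and two letters near $q^2/2$ separated by a gap of about $q$.
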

\begin{proof}
The short-relation rank is $0$, $1$ or $2$; rank three would force the
translated vector to be zero. Rank two was settled after
Lemma~\ref{M-lem:mass-minor} (Corollary~\ref{M-cor:max-rank}).
Rank one is Theorem~\ref{M-thm:corank-model} with $m=4$ and $H\le q$.
For rank zero we give an explicit model with no nonzero $q$-relation. Set
\[
 b=\frac{q(q+1)}2+1,\qquad
 d=\begin{cases}q,&q\text{ even},\\q+1,&q\text{ odd},\end{cases}
 \qquad T_q=\{0,1,b,b+d\}.
\]
Its diameter is at most $q(q+1)$: at $q=2$ it is six, and for $q\ge3$
use $d\le q+1$ and $q(q+1)/2\ge q+2$.

Suppose $(\delta_0,x,y,z)$ is a nonzero balanced relation on this model
of mass $M\le q$. Put $u=y+z$, so $x+bu+dz=0$.
If $u=0$, then $x=-dz$ and $M=(d+1)|z|>q$ unless the relation is zero.
Otherwise negate it so that $u>0$. Since $|x|\le q<b$, necessarily
$z=-w<0$, $w\ge1$, whence $y=u+w$ and $x=dw-bu$.
The positive and negative masses imply $M\ge u+w$ and $M\ge w-x$.
Thus $w\le q-u$, $x\ge w-q$, and
\[
 bu=dw-x\le(d-1)w+q\le dq-(d-1)u,
 \qquad u(b+d-1)\le dq.
\]
But $2(b+d-1)-dq=q(q+1-d)+2d>0$, so $u=1$.
Write $q=2r+\varepsilon$, $\varepsilon\in\{0,1\}$; then
$d=2r+2\varepsilon$ and $b=dr+r+1+\varepsilon$.
If $w\le r$, then $x\le-r-1-\varepsilon<0$ and
\[
 M=w-x=b-(d-1)w\ge b-(d-1)r=q+1.
\]
If $w\ge r+1$, then $x\ge r+\varepsilon-1\ge0$ and
\[
 M=1+w+x=(d+1)w+1-b\ge(d+1)(r+1)+1-b=q+1.
\]
Both contradict the mass bound. Hence $T_q$ models the rank-zero case.

For sharpness the $m=4$ family of Theorem~\ref{M-thm:corank-model} is
$\{0,\theta,1,q\}$. Its only primitive short relation, up to sign,
is $(q-1,0,-q,1)$. Every increasing integer model has $c=qb$ and
$b\ge q+1$, giving $c\ge q(q+1)$; equality is attained at
$\{0,1,q+1,q(q+1)\}$. Finally $q=1$ tests only individual-letter
equalities, and four distinct integers require and admit diameter three.
\end{proof}

Sharpness is specifically for the increasing correspondence and the full
finite zero-relation pattern. It is not a bound asserted sharp under arbitrary
reordering, nor for modeling only one extremal value of an avoidance
function. For instance, at
$q=3$ the alphabet $\{0,\theta,1,3\}$ can be sent in that label order to
$[0,8,3,9]$, with the same primitive short relation $(2,0,-3,1)$ and
diameter nine rather than twelve. That correspondence reverses the middle
images and is outside the theorem.

\begin{remark}[the three ranks separately]\label{M-rem:four-ranks}
The proof gives the maximum of $M_q(T)$ over each rank separately. For rank
two the maximum is $q^2$, attained by $\{0,q,q+1,q^2\}$
(Corollary~\ref{M-cor:max-rank}); this value and its sharpness are Shin's
\cite[Corollary~7.3, Lemma~3.1, Proposition~10.6]{M-Shin26}. For rank one the
maximum is $q(q+1)$, attained by the profile of $\{0,\theta,1,q\}$, as in
Theorem~\ref{M-thm:sharp-four-model}. For rank zero there is a single profile, and its minimum model diameter
is by definition $L_4(q)$. The set $T_q$ has diameter
$b+d=\binom{q+2}{2}+[q\text{ odd}]$, so the proof above shows
$L_4(q)\le\binom{q+2}{2}+[q\text{ odd}]$; Shin's Theorem~10.3
\cite{M-Shin26} proves equality, and $T_q$ is his optimal ruler $A_h^{*}$. Thus
\[
 L_4(q)=\frac{(q+1)(q+2)}2+[q\text{ odd}],
\]
which is used as a known formula in Sections~\ref{M-sec:cluster}
and~\ref{M-sec:rank-sensitive}. For $q=2,\ldots,5$ the three per-rank maxima
$L_4(q)$, $q(q+1)$, $q^2$ are $6,6,4$; $11,12,9$; $15,20,16$; $22,30,25$.
Direct enumeration of primitive integer four-sets of diameter at most
$q(q+1)$ confirms these per-rank maxima for $q=2,\ldots,5$.
Shin's label-level quantity satisfies
$\nu(2,4)=6$, $\nu(3,4)=11$ and
$L_4(h)\le\nu(h,4)\le h^2$ for $h\ge4$
\cite[Corollary~10.5]{M-Shin26}. Thus it equals $H_4$ at $h=2$ and is
strictly smaller for $h\ge3$; the relation-free minima above do not by
themselves determine $\nu(h,4)$.
\end{remark}

\section{Rank-one models and the sharp five-letter bound}\label{M-sec:five}

The only difficult relation rank for five letters is one. We prove the
more precise bound $\mu q(q+1)$ first, where $\mu$ is the primitive
relation mass. The universal bound then follows by considering the four
possible ranks. This separates the geometric case reduction from the
integer constructions that implement it.

Throughout this section $q\ge2$. For a rank-one five-letter profile,
let $\delta$ be the primitive balanced generator of its complete
short-relation space and let $\mu=\mass(\delta)$. In translated
coordinates $x_0=0$, write
\[
 K=\{(x_1,\ldots,x_4):\delta\cdot(0,x_1,\ldots,x_4)=0,
                    \ 0\le x_1\le\cdots\le x_4\}.
\]
This ordered kernel cone has dimension three.

\subsection{Two insertion lemmas and an ordering face}
\begin{lemma}\label{M-lem:append-qc-plus-one}
Suppose $0=x_0<x_1<\cdots<x_{m-1}=c$ has no nonzero balanced relation of mass
at most $q$. Append $D=qc+1$. The enlarged alphabet still has no such
relation.
\end{lemma}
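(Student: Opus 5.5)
The plan is to argue by contradiction. Suppose $\delta=(\delta_0,\ldots,\delta_{m-1},\delta_m)$ is a nonzero balanced relation of mass $M\le q$ on the enlarged alphabet $0=x_0<\cdots<x_{m-1}=c<x_m=D$. Its coefficient $\delta_m$ on the new letter $D$ is the only new ingredient. If $\delta_m=0$, then $\delta$ restricts to a balanced relation of the same mass on the original alphabet, and it is nonzero there, contradicting the hypothesis. So the whole task is to rule out $\delta_m\neq0$. Replacing $\delta$ by $-\delta$, we may assume $\delta_m=k\ge1$.

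The key step is a size comparison. The equation $\delta\cdot x=0$ reads
\[
 kD=-\sum_{i<m}\delta_ix_i\le\sum_{i<m}\max(-\delta_i,0)\,x_i\le c\sum_{i<m}\max(-\delta_i,0)=cM\le cq.
\]
Here we use $0\le x_i\le c$ for $i<m$, and the fact that the negative mass of $\delta$ equals its total mass $M$; the negative entries all sit among indices $i<m$ because $\delta_m>0$. Since $k\ge1$, we get $D\le qc$, which contradicts $D=qc+1$. Hence $\delta_m=0$, and we are back in the first case.

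The main point needing care is to apply the mass bound correctly. We must bound $\sum\max(-\delta_i,0)$ by $M$ itself, not merely by $M-k$. The cruder bound $M$ is already enough, because $D$ exceeds $qc$. There is also a slightly sharper variant: the negative mass equals the positive mass, which is at least $k$, so one could even use $(M)c\ge kD$. Either way the argument closes, so I expect no genuine obstacle beyond stating the sign normalization cleanly. Finally, note that the enlarged alphabet is still strictly increasing, since $qc+1>c$ because $c>0$ when $m\ge2$. It is also still integer or real as before, so the conclusion applies verbatim to the $L_k(q)$ existence claim used earlier.
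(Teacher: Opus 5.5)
Your proof is correct and is essentially the paper's argument. You orient the coefficient of $D$ to be positive, bound the negative side by $qc$ using mass at most $q$ and letters at most $c$, contradict $D=qc+1$, and fall back on the relation-free hypothesis when that coefficient vanishes. The aside about bounding by $M-k$ versus $M$ is garbled but not needed, since your displayed inequality already uses the correct negative mass $M$.
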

\begin{proof}
Suppose a short relation uses $D$ with nonzero coefficient. Reverse its sign
if necessary so that this coefficient is positive. Its positive weighted sum
is at least $D$, because all letters are nonnegative. Its negative weighted
sum is at most $qc$: its total negative mass is at most $q$ and every
negatively used letter is at most $c$. This contradicts $D>qc$. Therefore the
coefficient of $D$ vanishes, and the original relation-free property excludes
the relation.
\end{proof}

\begin{lemma}\label{M-lem:model-near-end}
Every four-letter rank-one $q$-profile of primitive relation mass
$\mu\le q$ has an integer model $S=\{0,A,B,C\}$ with $A\ge2$ and
$C\le\mu(q+1)$, without requiring $S$ to be primitive. Consequently a
five-letter rank-one profile with $\delta_1=0$ or $\delta_3=0$ has a
primitive model of diameter at most $\mu q(q+1)$.
\end{lemma}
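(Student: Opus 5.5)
The first assertion comes from the two-dimensional-kernel machinery, and the consequence from an insertion argument that uses $A\ge2$. A four-letter rank-one profile has $d=2$, and its primitive relation has mass $\mu$, so $H=\mu$. The flag proof of Theorem~\ref{M-thm:flag-model} then gives a model $w=u_1+u_2$ with $\height(u_1)\le\mu$ and $\height(u_2)\le\mu q$, hence $C\le\mu(q+1)$. I will not divide by the gcd at the end, which is why the lemma does not ask for primitivity. The extra work is to secure $A=w_1\ge2$, using the freedom in choosing the flag $F_1\subset F_2=K_0$. $K_0$ is a two-dimensional pointed cone with two extreme rays, and $F_1$ may be either of them.
\begin{itemize}
\item If both extreme rays have positive first coordinate, then $w_1=u_{1,1}+u_{2,1}\ge2$, since the $u_j$ are integer vectors and $u_2$ lies on an extreme ray of the chamber.
\item Otherwise exactly one ray lies on $x_1=0$, and I take $F_1$ to be the other ray. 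Then $u_{1,1}\ge1$, and trouble arises only if $u_2$ also lies on $x_1=0$ and $u_{1,1}=1$.
\end{itemize}

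\textbf{The remaining case (main obstacle).} Here $u_1=(1,\ast,\ast)$ and $u_2=(0,\ast,\ast)$. I would first try replacing $w$ by $2u_1+u_2$ or $u_1+2u_2$, whichever stays in the bound. Both still lie in the relative interior of the same chamber, so they are models, with first coordinate $2$. Their heights are at most $2\height(u_1)+\height(u_2)$ and $\height(u_1)+2\height(u_2)$ respectively. So this works whenever the slack in the cofactor bounds is at least $\height(u_1)$ or $\height(u_2)$; the chosen candidate's height must be at most $\mu(q+1)$. If that fails, the profile is rigid. The relation then has a short explicit form: $x_1=1$ with a ray on $x_1=0$ forces $\delta$ to meet the ordering face $x_1=0$ in a specific way. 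In that situation I would write down the model $\{0,A,B,C\}$ directly, in the style of the explicit constructions in Theorems~\ref{M-thm:corank-model} and~\ref{M-thm:sharp-four-model}. I expect this case split to be the main obstacle.

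\textbf{The five-letter consequence.} By Lemma~\ref{M-lem:elementary}(1), reflection swaps labels $1$ and $3$ among $0,\ldots,4$ and preserves the mass $\mu$, so it suffices to treat $\delta_1=0$. Delete label $1$. By Lemma~\ref{M-lem:elementary}(3), the remaining four letters have a rank-one profile generated by the restriction of $\delta$, which is still primitive of mass $\mu$. Take the model $\{0,A,B,C\}$ above and form $\{0,1,qA,qB,qC\}$, which is increasing because $qA\ge2q>1$. Its diameter is $qC\le\mu q(q+1)$.

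Now let a balanced relation of mass at most $q$ have coefficient $c$ on the letter $1$. Reducing modulo $q$ forces $c\in\{0,\pm q\}$.
\begin{itemize}
\item If $c=q$ (after negating if necessary), the whole positive mass is used on the letter $1$. The equation then says that $q$ equals $q$ times a sum of at most $q$ elements of $\{0,A,B,C\}$. So that sum would be $1$, which is impossible because every nonzero letter is at least $A\ge2$.
\item If $c=0$, dividing by $q$ returns a short relation on $\{0,A,B,C\}$. It vanishes exactly when the corresponding relation vanishes on the original four letters.
\end{itemize}
Hence the five-letter profile is reproduced exactly. Finally, divide by the gcd to obtain a primitive model with no larger diameter, by Lemma~\ref{M-lem:elementary}(2).
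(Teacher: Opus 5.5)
\paragraph{There is a genuine gap in the case you flag as the main obstacle.} Your five-letter insertion argument is correct and is the paper's. Your first case, with no extreme ray of $K_0$ on $x_1=0$, is also fine. The gap is the case $u_{1,1}=1$ with the second chamber ray $u_2$ lying on $x_1=0$. Using only $\height(u_2)\le\mu q$, you can bound $2u_1+u_2$ by nothing better than $\mu(q+2)$. Your fallback (``the profile is rigid \dots\ I would write down the model $\{0,A,B,C\}$ directly'') is never carried out. So the first assertion of the lemma is unproved in exactly this case. The alternative $u_1+2u_2$ does not rescue it either: its first coordinate is $u_{1,1}+0=1$, not $2$.

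\paragraph{The missing observation makes this case the easy one.} The inequality $x_1\ge0$ defines the two-dimensional pointed cone $K_0$, and $x_1$ is not identically zero on $L$. Hence $K_0\cap\{x_1=0\}$ is an extreme ray of $K_0$ itself. A nonzero chamber ray on $x_1=0$ is therefore this original cone ray. It is cut out by the deleted row of $\delta$ (mass $\mu$) and the ordering row $e_1$ (mass one). Lemma~\ref{M-lem:mass-minor} then gives $\height(u_2)\le\mu$, not $\mu q$.

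It follows that $2u_1+u_2$ lies in the relative interior of the chamber spanned by $u_1,u_2$. Its first coordinate is $2u_{1,1}\ge2$, and its height is at most $3\mu\le\mu(q+1)$ since $q\ge2$. Without gcd division, this is the required $S$. This is exactly how the paper handles the case, and no analysis of ``rigid'' profiles is needed.

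\paragraph{A minor point on the core profile.} The fact that the four-letter core has exactly the rank-one profile of $\delta$ restricted does not follow from Lemma~\ref{M-lem:elementary}(3). It follows because any core relation extends by a zero coefficient at label $1$ to a vanishing short relation on $T$, and is therefore a multiple of $\delta$. Since $\delta_1=0$, no vanishing short relation on $T$ uses label $1$, which your ``reproduced exactly'' step also needs.
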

\begin{proof}
In the two-dimensional ordered kernel cone of the four-letter core,
choose a primitive original ray $u$ with positive first coordinate; one
exists because the cone contains a strictly ordered point. Its height
is at most $\mu$. The $q$-chamber adjacent to it has another primitive
ray $v$ of height at most $\mu q$. If $v$ also has positive first coordinate,
use $u+v$. Otherwise $v$ lies on the supporting line $x_1=0$ and is an
original cone ray, so has height at most $\mu$; use $2u+v$. Its height
is at most $3\mu\le\mu(q+1)$. Both positive combinations are strictly
inside one full chamber and have first coordinate at least two. Do not
divide their gcd, which could shrink that coordinate.

If $\delta_1=0$, deleting that letter leaves exactly the original
rank-one core profile, because any core relation extends by a zero
coefficient to a relation of the five-letter alphabet, hence is a multiple
of $\delta$. Insert the free letter using
\[
 \{0,1,qA,qB,qC\}.
\]
In a short relation its coefficient $x$ at $1$ is divisible by $q$ and has
$|x|\le q$. If $x=q$, it consumes all positive mass; after division
by $q$, the negative side would express $1$ as a sum of $q$ nonnegative
letters of $S$. This is impossible because $A\ge2$. Sign reversal
excludes $x=-q$. Thus $x=0$ and only the core relations remain. The
new model is primitive because it contains $1$. Reflection and translation
give the case $\delta_3=0$, with the same diameter and increasing
correspondence.
\end{proof}

\begin{lemma}\label{M-lem:model-append-minus-one}
Let an integer alphabet in $[0,C]$ contain $0$ and $C$, with smallest
positive letter and last gap both at least two. Appending $qC-1$
creates no new balanced relation of mass at most $q$.
\end{lemma}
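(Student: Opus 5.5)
The plan is to argue directly with the two sides of a putative new relation, in the spirit of Lemma~\ref{M-lem:append-qc-plus-one}, using the extremal shape of $D=qC-1$. Any balanced relation of mass at most $q$ on the enlarged alphabet that does not use $D$ is a relation on the old alphabet, so it is not new. It therefore suffices to show that no such relation has a nonzero coefficient $k$ at $D$.

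Suppose one does, and reverse its sign so that $k>0$. Write the relation as an equality of two sums with $M\le q$ letters each (the mass). The positive side consists of $k$ copies of $D$ together with $M-k$ letters of $S$. The negative side consists of $M$ letters of $S$. Coefficients are net, so $D$ does not appear on the negative side. All letters lie in $[0,C]$, so the negative sum is at most $MC\le qC$.

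\textbf{Step 1: $k=1$.} If $k\ge2$, the positive sum is at least $2(qC-1)$. Since $q\ge2$ and $C\ge2$, this exceeds $qC$, a contradiction.

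\textbf{Step 2: $M=q$.} If $M\le q-1$, the negative sum is at most $(q-1)C<qC-1\le$ positive sum, using $C\ge2$.

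\textbf{Step 3: the other positive letters are all $0$.} Now the positive side is $D+P$, where $P$ is a sum of $q-1$ letters of $S$. The negative sum is at most $qC$, so $P\le1$. Since every positive letter of $S$ is at least two, $P=0$, and all $q-1$ companion letters of $D$ equal $0$.

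\textbf{Step 4: the key step.} It remains to show that $qC-1$ is not a sum of $q$ letters of $S$. If all $q$ letters equal $C$, the sum is $qC$. Otherwise at least one letter is at most the second-largest letter, which is $\le C-2$ by the last-gap hypothesis, and the sum is at most $qC-2$. Either way the sum differs from $qC-1$, a contradiction.

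Hence the coefficient of $D$ vanishes and no new relation arises. I expect no real obstacle. The only care needed is to treat coefficients as net multiplicities, so $D$ cannot sit on both sides. One must also invoke both gap hypotheses at the right moments: the smallest positive letter in Step 3 and the last gap in Step 4.
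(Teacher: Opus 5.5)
Your proof is correct and is essentially the paper's argument. The paper first forces the coefficient of $D=qC-1$ to be one via $2D>qC$. It then pads the negative side with zeros to $q$ terms and rewrites the equality as $1=\sum_{\text{neg}}(C-t)+\sum_{\text{pos},\,t\ne D}t$, where each term is $0$ or at least two; this one identity covers your Steps 2--4, with your separate exclusion of $M<q$ absorbed by the padded zeros, each of which contributes $C\ge2$.
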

\begin{proof}
Orient a proposed relation so that the new letter $D=qC-1$ has positive
coefficient. Since $2D>qC$, that coefficient is one. Pad the negative
side with zeros to $q$ terms. The equality becomes
\[
 1=\sum_{\text{negative terms }t}(C-t)
    +\sum_{\substack{\text{positive terms }t\\t\ne D}}t.
\]
Every term is nonnegative and every nonzero term is at least two, an
impossibility. Existing relations are unaffected.
\end{proof}

Call a face $K\cap\{x_i=x_{i+1}\}$ of $K$ ($0\le i\le3$, with $x_0=0$) a
\emph{genuine facet} if it has dimension two. Suppose a genuine facet
$x_i=x_{i+1}$ collapses two adjacent letters; merge their coefficients in
$\delta$ and divide by the coefficient gcd $g$, and let the induced
primitive four-letter relation have mass $\mu'$. On that facet the first two
flag rays have heights at most $\mu'$ and $\mu'q$, while a ray outside it in
an adjacent full chamber has height at most $\mu q^2$. To see the first two
heights, let $\rho_i$ be the ordering row of the facet ($\rho_0=e_1$,
$\rho_i=e_{i+1}-e_i$ for $i\ge1$) and replace the deleted row
$(\delta_1,\ldots,\delta_4)$ of $\delta$ on this facet by
$\bigl((\delta_1,\ldots,\delta_4)-\delta_{i+1}\rho_i\bigr)/g$, the deleted
row of the merged primitive relation, which has mass $\mu'$ and vanishes in
coordinate $i+1$. Each of the two rays is cut out by $\delta$, $\rho_i$ and
one further active row, an ordering row for the first ray and an ordering or
$q$-relation row for the second (proof of Theorem~\ref{M-thm:flag-model}).
The row operation and the division change neither the kernel line nor its
primitive generator, and the identification of the two coincident
coordinates preserves heights; so Lemma~\ref{M-lem:mass-minor}, applied with
the merged row in place of $\delta$, gives heights at most $\mu'$ (one
ordering row) and $\mu'q$ (one further $q$-relation row). Their sum gives
\begin{equation}\label{M-eq:model-mass-drop}
 \operatorname{diameter}\le\mu q^2+\mu'(q+1).
\end{equation}
This is a relative weak-face cofactor argument. Forced equalities are
allowed on the face; functionals identically zero on its span are ignored
there and become strict after adding the last ray. No theorem requiring
four strictly distinct collapsed letters is being applied. If adjacent
coefficients have opposite signs, their merger reduces mass; a gcd can
also reduce primitive mass.

\subsection{A complete reduction by support and signs}

\begin{lemma}[Reduction of rank-one profiles]\label{M-lem:rank-one-support}
Let $\delta\in\Z^5$ be a primitive balanced relation, of mass $\mu$, that
vanishes on a strictly increasing alphabet. At least one of the following
holds, after reflection and reversal of the relation's sign when needed:
\begin{enumerate}
\item $\delta_1=0$ or $\delta_3=0$;
\item an ordering facet merges adjacent coefficients and gives a primitive
      relation of mass strictly less than $\mu$;
\item an ordering facet forces two gaps to be zero and leaves the other
      two gaps free and nonnegative;
\item the support is $\{0,1,3\}$, with relation
      $\mu t_1=s t_3+(\mu-s)t_0$, $1\le s<\mu$, $\gcd(s,\mu)=1$;
\item $\delta=(R,u,0,-\mu,N)$, where $R,u,N>0$,
      $\mu=R+u+N$ and $\gcd(N,\mu)=1$.
\end{enumerate}
\end{lemma}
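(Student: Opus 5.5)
We work throughout in gap coordinates, because there both the ordering facets and the mass bookkeeping become transparent. Put $g_k=t_k-t_{k-1}\ge0$ for $1\le k\le4$ and form the tail sums $\tau_k=\sum_{i\ge k}\delta_i$. Since $\delta$ is balanced, $\delta\cdot t=\sum_{k=1}^4\tau_kg_k$ and $\delta_i=\tau_i-\tau_{i+1}$ (with $\tau_0=\tau_5=0$). The ordered kernel cone $K$ is then $\{g\ge0:\ \sum_k\tau_kg_k=0\}$. The relation vanishes on a strictly increasing alphabet, and $\delta$ is nonzero and primitive, so $\tau$ has entries of both strict signs.

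The face $x_{k-1}=x_k$ of $K$ is the face $g_k=0$. It is a genuine facet exactly when the remaining entries $(\tau_j)_{j\ne k}$ still contain both strict signs. On such a facet the merged four-letter relation is $\delta$ with $\delta_{k-1}$ and $\delta_k$ replaced by their sum. Its mass is $\mu-\min(|\delta_{k-1}|,|\delta_k|)$ when these two coefficients have opposite signs, and $\mu$ otherwise; dividing by the coefficient gcd can lower the mass further. If instead the remaining nonzero $\tau_j$ all have one sign, then the facet forces those gaps to vanish, and the gaps with $\tau_j=0$ stay free. With exactly two nonzero $\tau_j$ left and two zero ones, this is alternative (3).

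We then run a finite case analysis on the sign pattern of $(\delta_0,\ldots,\delta_4)$. We may assume $\delta_1\ne0$ and $\delta_3\ne0$, since otherwise (1) holds. Reflection ($\delta_i\mapsto\delta_{4-i}$) and global sign reversal cut the patterns down to a handful. Vanishing on an increasing alphabet also excludes the pattern in which all positive coefficients precede all negative ones, and the reverse pattern.

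For each adjacent opposite-sign pair $(\delta_{k-1},\delta_k)$ we test the facet $g_k=0$ by the tail-sign criterion. If that facet is genuine, (2) holds. If it is not genuine, the forced equalities either leave exactly two free gaps, giving (3), or collapse more than that. The patterns that survive are those in which every opposite-sign adjacency yields a degenerate facet forcing three gaps to zero. Tracking these, we expect to be left with the following.

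\emph{Support $\{0,1,3\}$.} Up to sign the relation is $(\mu-s,-\mu,0,s,0)$. Merging $\delta_0$ and $\delta_1$ gives $s(t_3-t_1)=0$, which forces three gaps to vanish. Primitivity of $\delta$ gives $\gcd(s,\mu)=1$. This is alternative (4).

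\emph{Pattern $(+,+,0,-,+)$.} Up to reflection and sign this is $(R,u,0,-\mu,N)$. The merge at $g_4$ gives $Rt_0+ut_1=(R+u)t_3$, which is degenerate. The merge at $g_1$ gives the four-letter relation $(\mu-N,0,-\mu,N)$, which has mass $\mu$ and gcd $\gcd(N,\mu)$. If that gcd exceeds $1$, the primitive mass drops and (2) holds. Otherwise we are in alternative (5).

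The remaining configurations should all be absorbed by the earlier steps. Patterns with $\delta_2\ne0$ always contain an opposite-sign adjacency whose facet keeps both tail signs. Patterns with $\delta_2=0$ and $\delta_1,\delta_3$ of the same sign need $\delta_0$ or $\delta_4$ of the opposite sign, and these reduce to the two shapes above or to (3).

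The main obstacle is the exhaustiveness of this enumeration. We must make sure that every pattern with $\delta_1,\delta_3\ne0$ is caught. In particular, we must verify the genuineness criterion in borderline cases where a merged coefficient becomes zero, or where some $\tau_j$ vanish so that a facet leaves free gaps. We must also confirm that the gcd drop is detected on the correct facet. To keep this manageable we would first tabulate the sign vectors of $\tau$ rather than of $\delta$, since genuineness and the free-gap condition are read off directly from $\tau$. Only afterwards would we translate the surviving $\tau$-patterns back to the shapes (4) and (5).
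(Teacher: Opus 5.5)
Your setup matches the paper's. Your tail sums $\tau_k$ are its $A_j$. An opposite-sign adjacent pair $(\delta_{k-1},\delta_k)$ is exactly a strict local extremum of the sequence $0,\tau_1,\ldots,\tau_4,0$. Your two terminal shapes, including the $\gcd(N,\mu)$ split on the facet $g_1=0$, are exactly alternatives (4) and (5).

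But the lemma \emph{is} the exhaustiveness statement, and you leave it at ``we expect'' and ``should be absorbed''. As written, it is not proved. Moreover, the two claims you offer toward it are not right:
\begin{itemize}
\item The assertion that every pattern with $\delta_2\ne0$ contains an opposite-sign adjacency whose facet keeps both tail signs fails for $\delta=(-1,1,1,-1,0)$, i.e.\ $t_1-t_0=t_3-t_2$. Here $\tau=(1,0,-1,0)$, and each of the facets $g_1=0$, $g_3=0$ leaves a single sign. So this profile falls under (3), not (2).
\item Your paragraph on $\delta_2=0$ treats $\delta_1,\delta_3$ of the same sign. There $\tau$ is monotone and one only gets (2) or (3). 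Shapes (4) and (5) both have $\delta_1\delta_3<0$, and that subclass, where the real work lies, is never argued.
\end{itemize}
Also, ``genuine exactly when both signs remain'' is not an equivalence with dimension two. In the situation of (3) the face $g_k=0$ is also two-dimensional. Only the implication from both signs to dimension two holds, which is the direction you use.

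The missing idea makes the enumeration short. Since $\tau_j-\tau_{j+1}=\delta_j$ and $\delta_1\delta_3\ne0$, the only possible equality between adjacent nonzero tails is the plateau $\tau_2=\tau_3$ created by $\delta_2=0$. The boundary values $\tau_0=\tau_5=0$ never equal a nonzero tail. This splits into three cases.
\begin{itemize}
\item If some sign occurs at least twice among the $\tau_j$, the extreme tail of that sign is a strict local extremum unless it lies on the plateau. Deleting it leaves both signs, which gives (2).
\item If each sign occurs exactly once, deleting the positive tail leaves one nonzero tail and two zero tails, which gives (3).
\item What survives is the plateau case. After sign reversal and reflection, $\tau=(\tau_1,-P,-P,N)$ with $P,N>0$ and $-P<\tau_1\le0$. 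A second positive tail, or $\tau_1<-P$, would again give (2), and $\tau_1=-P$ would mean $\delta_1=0$.
\end{itemize}
In the plateau case:
\begin{itemize}
\item If $\tau_1=0$, then $\delta_0=0$, and reflection gives support $\{0,1,3\}$ with $\gcd(s,\mu)=1$ by primitivity. This is (4).
\item If $\tau_1=-R<0$, then $\delta=(R,P-R,0,-(P+N),N)$ with $\mu=P+N$. The facet $g_1=0$ is genuine, since $(-P,-P,N)$ has both signs. It gives (2) when $\gcd(P,N)>1$, and otherwise (5) holds.
\end{itemize}
This is essentially the paper's argument, which it organizes by the zero pattern of $\delta$: all coefficients nonzero, an outer zero, or only the middle zero.
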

\begin{proof}
Assume $\delta_1\delta_3\ne0$. Put
\[
 A_j=\sum_{i=j}^4\delta_i\quad(1\le j\le4),\qquad A_0=A_5=0.
\]
Thus $\delta_i=A_i-A_{i+1}$. In positive consecutive gaps the equation
is $\sum A_jg_j=0$, so the $A_j$ have both signs. A strict local
extremum $A_j$ gives adjacent coefficients of opposite signs. If deleting
it leaves both signs, $g_j=0$ is a two-dimensional ordering facet and
merging those coefficients reduces mass, giving (2). If deletion leaves
two zeros and one nonzero coefficient $A_k$, the same face forces
$g_k=0$ and leaves the two remaining gaps free, giving (3).

If all five coefficients of $\delta$ are nonzero, the endpoints $A_1,A_4$
are nonzero and adjacent tail sums are distinct. Some sign repeats:
otherwise $A_2=A_3=0$, contrary to $\delta_2\ne0$. A global extremum
of a repeated sign is strict at its neighbors, and its deletion leaves
both signs. This gives (2).

Suppose an outer coefficient is zero, say $\delta_4=0$. Then
$A=(A_1,A_2,A_3,0)$ and $A_3\ne0$. If $\delta_2=0$, feasibility
forces support $\{0,1,3\}$; a relation on only two distinct letters
cannot vanish. Primitivity and the order give exactly (4). Otherwise
the first three adjacent tail sums differ. A repeated sign gives (2)
by its strict global extremum. With no repeated sign there is one
positive tail, one negative tail and two zeros; deleting a nonzero
extremum gives (3). Reflection handles $\delta_0=0$.

It remains that only the middle coefficient is zero. Now
$A=(A_1,A_2,A_2,A_4)$ has nonzero endpoints, each different from $A_2$.
If $A_2=0$, (3) applies. If the endpoints have the same sign, an
endpoint extremum gives (2). Otherwise sign reversal and reflection
reduce to $A=(-R,-P,-P,N)$ with $R,P,N>0$ and $R\ne P$.
For $R>P$, deleting the first tail gives (2). For $R<P$, set $u=P-R$;
then $\delta=(R,u,0,-(P+N),N)$ and $\mu=P+N$.
If $g=\gcd(P,N)>1$, the genuine facet $g_1=0$ has merged row
$(P,0,-\mu,N)$ and primitive mass $\mu/g<\mu$, again (2).
Otherwise $\gcd(N,\mu)=1$ and (5) holds. These cases exhaust the
zero patterns and signs, independently of the observation order $q$.
\end{proof}

\subsection{The bound in terms of relation mass}
\begin{proposition}\label{M-prop:rank-one-mass}
Let $q\ge2$ and let $T$ be an ordered five-element real alphabet whose
complete space of vanishing $q$-relations has rank one, with primitive
balanced generator $\delta$ of mass $\mu$. Then $2\le\mu\le q$, and $T$ has
a primitive order-preserving $q$-model of diameter at most
\begin{equation}\label{M-eq:mass-sensitive}
 \mu q(q+1).
\end{equation}
\end{proposition}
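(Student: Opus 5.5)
We follow the five cases of Lemma~\ref{M-lem:rank-one-support}. We first note $2\le\mu\le q$. The bound $\mu\le q$ holds because $\delta$ is a multiple of a vanishing $q$-relation, and primitivity forces $\delta$ to be that relation up to sign. The bound $\mu\ge2$ holds because a mass-one balanced relation is $t_i=t_j$, which cannot vanish on distinct letters. After reflection and sign reversal, which preserve the bound by Lemma~\ref{M-lem:elementary}, exactly one of cases (1)--(5) applies.

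\textbf{Case (1)} is Lemma~\ref{M-lem:model-near-end}, which gives diameter at most $\mu q(q+1)$ directly.

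\textbf{Case (2).} We use the relative weak-face flag with the genuine facet as $F_2$. Bound \eqref{M-eq:model-mass-drop} gives diameter at most $\mu q^2+\mu'(q+1)$ with $\mu'\le\mu-1$. This is at most $\mu q^2+\mu q=\mu q(q+1)$, since $\mu'(q+1)\le(\mu-1)(q+1)\le\mu q$ whenever $\mu\le q+1$, which holds.

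\textbf{Case (3).} The facet forcing two gaps to vanish is a two-dimensional face $F_2$ on which $\delta$ restricts to zero. Its span is cut out by ordering rows alone. The flag proof of Theorem~\ref{M-thm:flag-model} then gives heights $1$ and $q$ for $u_1,u_2$, with $u_3$ of height at most $\mu q^2$. The diameter is at most $\mu q^2+q+1\le\mu q(q+1)$, using $\mu\ge2$ and $q\ge2$.

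\textbf{Case (4).} Letters $t_2,t_4$ are free. We model the core $\{t_0,t_1,t_3\}$ as $\{0,s,\mu\}$ scaled suitably and then insert the free letters. Concretely, we take the core $\{0,s(q+1),\mu(q+1)\}$, place $t_2$ via a small residue, and append $t_4$ using Lemma~\ref{M-lem:append-qc-plus-one}, or better Lemma~\ref{M-lem:model-append-minus-one}, adapted to keep the single relation. We must check that the diameter stays within $\mu q(q+1)$. The natural choice is $t_4=qC\pm1$ with $C\approx\mu(q+1)$, which is too big. So instead we apply the flag bound inside $K$, choosing the flag $F_1\subset F_2$ with $F_2=\{x_2=x_3,\ x_4=x_3\}$ or a similar face on which $\delta$ reduces to a three-letter relation of mass $\mu$. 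This gives heights $\mu$, $\mu q$ and $\mu q^2$ only if the first ray has height $\le\mu$. The total $\mu(1+q+q^2)$ overshoots by $\mu$. The fix is to pick the flag so that $u_1$ has $\delta$ identically zero on it, namely the ray $x_1=x_2=x_3=x_4$, which is excluded by $\delta$ unless its coefficients sum to zero on the merged letters. Here they do, since $\delta_1+\delta_3=\mu-s-\dots$ needs checking. Then $u_1$ has height $1$ and the total is $1+\mu q+\mu q^2$. We then sharpen by one, using the fact that $u_3$'s cofactor involves $\delta$ and two $q$-rows, or $\delta$, one $q$-row and an ordering row. This is exactly the same style of bookkeeping as in \eqref{M-eq:model-mass-drop}.

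\textbf{Case (5)} is the main obstacle: $\delta=(R,u,0,-\mu,N)$ with $\gcd(N,\mu)=1$. Its facets do not reduce the mass, and no letter is free at an end. We plan an explicit construction. First we model the four-letter core on labels $0,1,3,4$, with relation $Rt_0+ut_1+Nt_4=\mu t_3$, via Lemma~\ref{M-lem:model-near-end}. This gives $S=\{0,A,B,C\}$ with $A\ge2$ and $C\le\mu(q+1)$. Then we scale by $q$ and insert the middle letter $t_2$ as $qB'+1$ or $qA+1$. As in the proof of Lemma~\ref{M-lem:model-near-end}, any short relation using it must have coefficient divisible by $q$, hence $\pm q$, after which the remaining equation expresses $1$ modulo $q$-scaled letters. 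Mass considerations then exclude this, provided the gaps adjacent to the inserted letter in $S$ are at least two. The resulting diameter is $qC\le\mu q(q+1)$. The delicate point is guaranteeing that the core model has a gap at least two around the insertion point while keeping $C\le\mu(q+1)$. The hypothesis $\gcd(N,\mu)=1$ should be what allows choosing such a core model; we would verify this first by analyzing the two-dimensional kernel cone of the core explicitly.
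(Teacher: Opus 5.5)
Your bound $2\le\mu\le q$ and cases (1)--(3) are fine. For $\mu\le q$ the argument runs the other way round: every vanishing $q$-relation is an integer multiple $k\delta$, so $\mu\le|k|\mu\le q$. In case (3), cutting the face out by two ordering rows gives flag heights $1$ and $q$. This is the same estimate $\mu q^2+q+1$ that the paper gets from the point with $g_a=1$, $g_b=q$.

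The first genuine gap is case (4). You discard $qC-1$ with $C=\mu(q+1)$ as too large, but $qC-1=\mu q(q+1)-1$ is within the budget, and this is exactly the paper's proof. Take the core $\{0,sB,sB+1,\mu B\}$ with $B=q+1$. It has exactly the required relations: modulo $B$ the coefficient of $sB+1$ vanishes, since its absolute value is at most $q<B$, and then $\gcd(s,\mu)=1$ leaves only multiples of $(\mu,-s)$. Its first positive letter is $sB\ge3$ and its last gap is $(\mu-s)B-1\ge q\ge2$, so Lemma~\ref{M-lem:model-append-minus-one} appends $q\mu B-1$. Your substitute flag argument fails. For $\delta=\pm(\mu-s,-\mu,0,s,0)$ the ray $x_1=x_2=x_3=x_4$ is not in $K$, because $\delta\cdot(0,x_1,x_1,x_1,x_1)=\mp(\mu-s)x_1\neq0$; the height-one ray is $x_1=x_2=x_3=0$. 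Even with the correct ray, the total $1+\mu q+\mu q^2$ exceeds $\mu q(q+1)$ by one. Your ``sharpen by one'' step is unsupported: Lemma~\ref{M-lem:mass-minor} is only an inequality, and nothing forces the third ray to be cut out by an ordering row, or its cofactor vector to have a nontrivial gcd.

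The second gap is case (5), where your exclusion criterion is wrong. In $\{0,qA,Y,qB,qC\}$ with $\gcd(Y,q)=1$, a coefficient $q$ at $Y$ reduces, after division by $q$, to $Y=s_1+\cdots+s_q$ with $s_i\in S$. It does not reduce to writing $1$ as a sum of letters. That reduction worked in Lemma~\ref{M-lem:model-near-end} only because the inserted letter lay below every nonzero letter. Gaps of at least two do not exclude such sums: if $B=2A+1$, then $qA+1=(q-2)A+B+0$ gives a new mass-$q$ relation with nonzero coefficient at $t_2$. Since you also leave the core unspecified, case (5) is unproved.

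The paper instead builds anchors $S=\{0,1,B,C\}$ with $t_1\mapsto1$. It takes $C_0\in\{1,\ldots,\mu-1\}$ with $NC_0\equiv-u\pmod\mu$, which exists because $\gcd(N,\mu)=1$. It then sets $B_0=(NC_0+u)/\mu$, $B=Nq+B_0$ and $C=\mu q+C_0$, so that $\mu B-NC=u$. A case analysis on $E=Ny-\mu t$ shows that the $q$-relations on $S$ are exactly the multiples of $\delta$. Because the least positive anchor is $1$, every $q$-fold anchor sum is either at most $q$ or at least $B$. For $N\ge2$ (where $B\ge2q+1$) the value $X=q+1$ is therefore not such a sum, and for $N=1$ a similar count excludes $X=2q+1$. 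Then $\{0,q,X,qB,qC\}$ is a model of diameter $qC\le\mu q(q+1)-q$.
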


Every integer multiple $k\delta$ with $|k|\mu\le q$ belongs to the profile
of $T$ and must be preserved; the proof allows all such multiples and never
assumes $\mu=q$, nor that only the two primitive signs occur at order $q$.
The mass is at least two because a balanced relation of mass one would
identify two distinct letters.

\begin{proof}
Work in the three-dimensional ordered kernel cone of $\delta$, subdivided
by all $q$-relation hyperplanes. Lemma~\ref{M-lem:rank-one-support} gives
five cases. We establish the bound in each case, retaining every multiple
$k\delta$ with $|k|\mu\le q$ and excluding every other short relation.

\smallskip\noindent\emph{Cases (2) and (3): the two face estimates.}
If collapsing two adjacent letters defines a genuine facet, merging the two
coefficients of $\delta$ and dividing by the gcd gives a primitive
four-letter row of mass $\mu'$, and \eqref{M-eq:model-mass-drop} holds with
the present $\mu$: on the facet the deleted row of $\delta$ is replaced by
the merged row, which changes neither the kernel lines of the first two flag
rays nor their heights, so Lemma~\ref{M-lem:mass-minor} bounds these heights
by $\mu'$ and $\mu'q$ as in \eqref{M-eq:model-mass-drop},
while the third ray, outside the facet in an adjacent full chamber, is cut
out by the original row of mass $\mu$ and two further rows of mass at most
$q$, so has height at most $\mu q^2$. If $\mu'<\mu$, then
\begin{equation}\label{M-eq:mass-drop-budget}
 \mu q^2+(\mu-1)(q+1)=\mu q(q+1)-(q-\mu+1)\le\mu q(q+1)-1 .
\end{equation}
In case (3), let the free gaps be $g_a,g_b$, $a<b$.
Set $g_a=1$, $g_b=q$ and all other gaps to zero. The resulting repeated
letters take the three values $0,1,q+1$. No nonzero $q$-relation on
these three values vanishes: reduction modulo $q+1$ forces the
coefficient of $1$ to be zero, and the remaining coefficients then vanish.
Consequently this vector $z$, of height $q+1$, lies in the relative
interior of a full short-relation chamber on the face. Extend this
chamber by Lemma~\ref{M-lem:chamber-extension} and choose an extreme ray
$w$ outside the face with height at most $\mu q^2$.
A chamber functional vanishing at $z+w$ would vanish on $w$ and on
the whole span of the face, hence on the original kernel. Thus $z+w$
is strictly ordered and has exactly the required zero profile.
After gcd division its height is at most
\begin{equation}\label{M-eq:two-zero-budget}
 \mu q^2+q+1\le\mu q(q+1)-1,
\end{equation}
since $(\mu-1)q\ge2$.

\smallskip\noindent\emph{Case (1): a zero coefficient next to an endpoint.}
If $\delta_1=0$ or $\delta_3=0$, Lemma~\ref{M-lem:model-near-end}, which was
proved for an arbitrary primitive mass $\mu\le q$, gives a four-letter core
$\{0,A,B,C\}$ with $A\ge2$ and $C\le\mu(q+1)$, and the insertion
$\{0,1,qA,qB,qC\}$ is a primitive model of diameter at most $\mu q(q+1)$.

\smallskip\noindent\emph{The remaining supports.}
Beyond these three situations the exhaustion leaves the support
$\{0,1,3\}$, its reflection, and, with $u=P-R$, the coprime middle-free
shape
\begin{equation}\label{M-eq:final-shape}
 u=P-R>0,\quad R>0,\quad N>0,\quad \mu=R+u+N\le q,\quad \gcd(N,\mu)=1 .
\end{equation}
These are treated in the last two paragraphs.

\smallskip\noindent\emph{Case (4): the outer-free three-anchor support.}
For the support $\{0,1,3\}$ the primitive row reads
$\mu t_1=s\,t_3+(\mu-s)\,t_0$ with $1\le s\le\mu-1$ and $\gcd(s,\mu)=1$.
Put $B=q+1$ and $C=\mu B$. The core $\{0,sB,sB+1,C\}$ has exactly the
required relation line at order $q$: modulo $B$ the coefficient of $sB+1$
vanishes, its absolute value being at most $q<B$, and the remaining
equation $y\,sB+w\,\mu B=0$ has by coprimality exactly the solutions
$(y,w)=k(\mu,-s)$, the integer multiples of the primitive row, including
every multiple permitted at order $q$. Its first positive letter is
$sB\ge q+1\ge3$ and its last gap is $(\mu-s)B-1\ge q\ge2$, so
Lemma~\ref{M-lem:model-append-minus-one} applies: appending $D=qC-1$ gives
the increasing primitive model $\{0,sB,sB+1,\mu B,q\mu B-1\}$, of diameter
\begin{equation}\label{M-eq:three-anchor-diameter}
 D=\mu q(q+1)-1 .
\end{equation}
Reflection supplies the opposite outer-free support.

\smallskip\noindent\emph{Case (5): the coprime middle-free case.}
Assume \eqref{M-eq:final-shape}. Choose $C_0\in\{1,\ldots,\mu-1\}$ with
$NC_0\equiv-u\pmod\mu$ and define
\begin{equation}\label{M-eq:middle-free-anchors}
 B_0=\frac{NC_0+u}{\mu},\qquad B=Nq+B_0,\qquad C=\mu q+C_0 .
\end{equation}
The residue $C_0$ exists and is nonzero because $0<u<\mu$ and
$\gcd(N,\mu)=1$; from $0<NC_0+u<(N+1)\mu$ we get $1\le B_0\le N$. Moreover
$\mu(C_0-B_0)=(\mu-N)C_0-u>0$, since $\mu-N=R+u>u$; hence $B_0<C_0$ and,
using $N\le\mu-1$, which holds since $R,u\ge1$, $B<C$. Also $B>q$,
$C/\mu>q$, and
\begin{equation}\label{M-eq:middle-free-identity}
 \mu B-NC=u .
\end{equation}
We check all $q$-relations on the anchors $S=\{0,1,B,C\}$.
Write such a relation as $x+yB+zC=0$. If $y,z$ have equal weak signs
and are not both zero, then $|x|\ge B>q$. Otherwise reverse the sign
so that $y\ge1$ and $z=-t\le-1$, and put $E=Ny-\mu t$.
Equation~\eqref{M-eq:middle-free-identity} gives $x=-(CE+uy)/\mu$.
For $E\ge1$, $|x|\ge C/\mu>q$. For $E\le-1$, $C>uq$
gives $x>0$ and
\[
 x+y=C|E|/\mu+(1-u/\mu)y>q.
\]
Both contradict the mass bound. For $E=0$, coprimality gives
$y=\mu k$, $t=Nk$, $x=-uk$ and balancing coefficient $-Rk$.
These are precisely the multiples of the prescribed anchor relation;
all multiples with $\mu|k|\le q$ are allowed.

Choose $X=q+1$ if $N\ge2$. A $q$-term anchor sum is either at most
$q$, using only $0,1$, or at least $B\ge2q+1$, so it cannot be $X$.
If $N=1$, then $B_0=1$, $C_0=\mu-u=R+1\ge2$, and
$B=q+1$, $C\ge\mu q+2\ge3q+2$. Choose $X=2q+1$.
An anchor sum using no $C$ and zero, one, or at least two copies of
$B$ is respectively at most $q$, at most $2q$, or at least $2q+2$;
any use of $C$ is larger still. Again $X$ is absent.
Now form
\begin{equation}\label{M-eq:middle-free-model}
 \{0,\ q,\ X,\ qB,\ qC\}.
\end{equation}
Here $q<X<qB<qC$, the congruence $X\equiv1\pmod q$ forces the
coefficient at $X$ in a $q$-relation to be $0$ or $\pm q$, and the nonzero
case would express $X$ as a $q$-term sum of $S$; so the profile of
\eqref{M-eq:middle-free-model} is exactly the line of $\delta$ with all its
permitted multiples, the model is primitive because $\gcd(q,X)=1$, and its
diameter is
\begin{equation}\label{M-eq:middle-free-diameter}
 qC=\mu q^2+qC_0\le\mu q(q+1)-q .
\end{equation}
The five budgets are all at most $\mu q(q+1)$, which proves
\eqref{M-eq:mass-sensitive}.
\end{proof}

\begin{remark}[attribution]\label{M-rem:mass-attribution}
Bookkeeping by the masses of the vanishing relations, rather than by the
observation order alone, is Shin's filtered estimate
\cite[Remark~7.5]{M-Shin26}; for the sign-preserving problem his Theorem~7.2
gives, at five letters and rank one, the bound $3\mu q^2$ with $\mu=p_1$
the first birth degree. The constant $q(q+1)$ in \eqref{M-eq:mass-sensitive}
is specific to the equality-only problem.
\end{remark}

\subsection{The universal five-letter bound}
\begin{theorem}[Theorem~\ref{M-thmB}]\label{M-thm:sharp-five-model}
Every ordered five-element real alphabet has an order-preserving
$q$-equivalent primitive integer model of diameter at most $q^2(q+1)$,
for every integer $q\ge2$. This bound is sharp for every $q$.
\end{theorem}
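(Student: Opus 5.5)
The plan is to split by the complete short-relation rank $r\in\{0,1,2,3\}$ of the five-letter alphabet (here $m=5$, so $r\le m-2=3$ and $d=4-r$), bound each rank separately by at most $q^2(q+1)$, and then exhibit a witness requiring that diameter.

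\emph{Rank three.} Corollary~\ref{M-cor:max-rank} gives a model of diameter at most $H\le q^3<q^2(q+1)$.

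\emph{Rank two.} Here $d=2$, and Theorem~\ref{M-thm:corank-model} with $m=5$ gives diameter at most $(q+1)H\le q^{2}(q+1)$.

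\emph{Rank one.} Proposition~\ref{M-prop:rank-one-mass} gives $\mu q(q+1)\le q^2(q+1)$, since $\mu\le q$.

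\emph{Rank zero.} No nonzero $q$-relation vanishes, so every profile coincides with the relation-free one, and it suffices to produce one relation-free five-letter integer alphabet of diameter at most $q^2(q+1)$. Start from the four-letter relation-free model $T_q=\{0,1,b,b+d\}$ of Theorem~\ref{M-thm:sharp-four-model}, whose diameter is $c\le q(q+1)$. Apply Lemma~\ref{M-lem:append-qc-plus-one} to append $qc+1$; the result is still relation-free. Its diameter is $qc+1\le q^2(q+1)+1$, which is one too large only if $c=q(q+1)$ exactly. But for $q\ge3$ we have $c=\binom{q+2}{2}+[q\text{ odd}]<q(q+1)$, so $qc+1\le q^2(q+1)$. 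For $q=2$, $c=6$ and $qc+1=13>12$, so I would instead handle $q=2$ by an explicit relation-free five-set of diameter at most $12$ (for instance a Sidon-type set such as $\{0,1,4,9,11\}$), checked directly. Alternatively, Lemma~\ref{M-lem:model-append-minus-one} appends $qC-1$ when the smallest positive letter and the last gap are both at least two; reflecting or rescaling $T_q$ may meet those hypotheses. The minimum over all valid models is at most any one of them, and the relation-free case is the one place where a small finite check may be needed.

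\emph{Sharpness.} Use the irrational power family of Theorem~\ref{M-thm:corank-model} with $m=5$: $T=\{0,\theta,1,q,q^2\}$. By that theorem every increasing integer model has the form $\{0,a,b,qb,q^2b\}$ with $b\ge q+1$, hence diameter at least $q^2(q+1)$. The equality model $\{0,1,q+1,q(q+1),q^2(q+1)\}$ shows the bound is attained.

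The main obstacle was already absorbed into Proposition~\ref{M-prop:rank-one-mass}. What remains delicate here is only the rank-zero bookkeeping at small $q$, which I would settle by the explicit $q=2$ example.
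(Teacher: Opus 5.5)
Your proposal is correct and is essentially the paper's proof. It uses the same split by rank: Corollary~\ref{M-cor:max-rank} for rank three, Theorem~\ref{M-thm:corank-model} for rank two, Proposition~\ref{M-prop:rank-one-mass} for rank one, and appending $qc+1$ to $T_q$ for rank zero with $\{0,1,4,9,11\}$ substituted at $q=2$. It also uses the same witness $\{0,\theta,1,q,q^2\}$ and equality model $\{0,1,q+1,q(q+1),q^2(q+1)\}$ for sharpness. The only cosmetic difference is that for $q\ge3$ you obtain $qc+1\le q^2(q+1)$ from the strict inequality $c=\binom{q+2}{2}+[q\text{ odd}]<q(q+1)$, whereas the paper uses the explicit estimate $qc+1\le(q^3+3q^2+4q+2)/2$.
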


\begin{proof}[Proof of Theorem~\ref{M-thm:sharp-five-model}]
The short-relation rank is at most three. Rank three has a unique rational
kernel direction with primitive height at most $q^3$, by
Lemma~\ref{M-lem:mass-minor}. Rank two is Theorem~\ref{M-thm:corank-model}.
For rank zero, append $qc+1$ to the relation-free four-letter alphabet
$\{0,1,b,c\}$ constructed in Theorem~\ref{M-thm:sharp-four-model}.
Any short relation using the new letter has, after sign reversal, positive
sum at least $qc+1$ and negative sum at most $qc$, a contradiction.
For $q\ge3$, the displayed four-letter construction satisfies
\[
 qc+1\le\frac{q^3+3q^2+4q+2}{2}\le q^2(q+1),
\]
since $q^3-q^2-4q-2>0$. For $q=2$ use $\{0,1,4,9,11\}$; its fifteen
unordered two-term sums are distinct.

For rank one, Proposition~\ref{M-prop:rank-one-mass} gives
diameter at most $\mu q(q+1)\le q^2(q+1)$.
The alphabet $\{0,\theta,1,q,q^2\}$ with irrational $0<\theta<1$
forces diameter at least $q^2(q+1)$ by Theorem~\ref{M-thm:corank-model},
which also supplies its equality model. This proves sharpness.
\end{proof}

\begin{corollary}[relation-free five-letter models]\label{M-cor:five-rank-zero}
For every $q\ge2$ there is a five-element integer alphabet with no nonzero
$q$-relation and diameter at most $q^2(q+1)$; hence $L_5(q)\le q^2(q+1)$.
Explicitly, for $q\ge3$ the alphabet $\{0,1,b,c,qc+1\}$, with $b$ and $c=b+d$ as in
the proof of Theorem~\ref{M-thm:sharp-four-model}, is primitive, increasing,
relation-free, and of diameter at most $(q^3+3q^2+4q+2)/2$; for $q=2$ the
alphabet $\{0,1,4,9,11\}$ has diameter $11$.
\end{corollary}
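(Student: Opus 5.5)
The corollary largely restates material already inside the proof of Theorem~\ref{M-thm:sharp-five-model}, so the plan is to isolate and check it. For $q\ge3$ we start from the relation-free four-letter alphabet $T_q=\{0,1,b,c\}$ with $b=q(q+1)/2+1$ and $c=b+d$, which has no nonzero $q$-relation by the rank-zero part of the proof of Theorem~\ref{M-thm:sharp-four-model}. Applying Lemma~\ref{M-lem:append-qc-plus-one} with last letter $c$ shows that appending $D=qc+1$ keeps the alphabet relation-free. The alphabet $\{0,1,b,c,qc+1\}$ is increasing because $1<b<c<qc+1$, and it is primitive because it contains $0$ and $1$.

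For the diameter we use $d\le q+1$, so $c\le q(q+1)/2+q+2=(q^2+3q+4)/2$. Hence
\[
qc+1\le\frac{q^3+3q^2+4q+2}{2}.
\]
To compare this with $q^2(q+1)$ we need $2q^3+2q^2\ge q^3+3q^2+4q+2$, that is $q^3-q^2-4q-2\ge0$. At $q=3$ the left side is $27-9-12-2=4>0$. For $q\ge3$ its derivative $3q^2-2q-4$ is positive, so the inequality holds for all $q\ge3$.

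For $q=2$ the general construction gives $T_2=\{0,1,4,6\}$ and $2\cdot6+1=13>12$, so it is not good enough. We instead verify $\{0,1,4,9,11\}$ directly. At $q=2$, a nonzero balanced relation of mass at most two vanishing on the alphabet means two distinct multisets of size two with equal sums, because mass-one relations are excluded by distinctness. It therefore suffices to check that the fifteen sums $a+a'$ with $a\le a'$ are pairwise distinct. These sums are
\[
0,1,2,4,5,8,9,10,11,12,13,15,18,20,22,
\]
and they are distinct. The diameter is $11\le12$, and the alphabet is primitive because it contains $1$.

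Together these two cases give $L_5(q)\le q^2(q+1)$ for every $q\ge2$. The only delicate points are the elementary polynomial inequality and remembering that $q=2$ must be handled separately.
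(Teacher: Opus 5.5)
Your proposal is correct and follows the paper's own argument: append $qc+1$ to the relation-free set $T_q=\{0,1,b,c\}$ using Lemma~\ref{M-lem:append-qc-plus-one}, get primitivity from the letter $1$, bound the diameter with $d\le q+1$, and treat $q=2$ separately through the fifteen distinct two-term sums of $\{0,1,4,9,11\}$. The only difference is minor: you prove $q^3-q^2-4q-2>0$ for $q\ge3$ by a derivative argument, whereas the paper uses the estimate $q^2(q-1)-4q-2\ge 2q^2-4q-2>0$.
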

\begin{proof}
This is the rank-zero construction in the proof above, with
Lemma~\ref{M-lem:append-qc-plus-one}; the presence of $1$ ensures
primitivity. Since $d\le q+1$,
$qc+1\le q\bigl(q(q+1)/2+q+2\bigr)+1=(q^3+3q^2+4q+2)/2$, and
$q^2(q+1)-(q^3+3q^2+4q+2)/2=(q^3-q^2-4q-2)/2>0$ for $q\ge3$, because
$q^3-q^2-4q-2=q^2(q-1)-4q-2\ge2q^2-4q-2>0$.
\end{proof}

The rank-zero profile is a single profile, so the corollary is a bound on
$L_5(q)$. The present paper does not determine $L_5(q)$ for arbitrary $q$. Exhaustive search
 gives $L_5(2)=11$, attained by
$\{0,1,4,9,11\}$ (Corollary~\ref{M-cor:six-explanations}), $L_5(3)=23$,
attained by $\{0,1,15,18,23\}$, and $L_5(4)=41$, attained by
$\{0,1,24,37,41\}$, whereas the construction of the corollary gives $11$,
$34$ and $61$; so that construction is not optimal in the two cases $q=3,4$.
Shin's five-point superincreasing set \cite[Corollary~7.4]{M-Shin26} has
height $q^3+q^2+q+1$.

\section{Face lifting and the general bounds}\label{M-sec:lifting}

The universal radii $H_m(q)$ are finite by Theorem~\ref{M-thm:flag-model} and
nondecreasing in $m$ by Lemma~\ref{M-lem:elementary}. A known exact bound in a
smaller alphabet propagates upward through the facets of the ordered kernel
cone.

\begin{theorem}[face-lifting recurrence]\label{M-thm:lifting}
For $m\ge3$ and $q\ge2$,
\begin{equation}\label{M-eq:recurrence}
 H_m(q)\le q^{m-2}+H_{m-1}(q).
\end{equation}
\end{theorem}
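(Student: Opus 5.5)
We fix an ordered real $m$-alphabet $T$ with relation kernel $L$ of dimension $d$ and ordered cone $K_0$. The plan is to find one facet $F$ of $K_0$ on which the alphabet looks like an $(m-1)$-letter alphabet, take a model there of height at most $H_{m-1}(q)$, and then add one ray of height at most $q^{m-2}$ from outside $F$, as in the last step of the flag bound.

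If $d=1$, Corollary~\ref{M-cor:max-rank} already gives diameter at most $q^{m-2}$, so we may assume $d\ge2$. We choose an ordering facet $F=K_0\cap\{x_i=x_{i+1}\}$ of dimension $d-1$; one exists because $K_0$ is a pointed cone of dimension $d\ge2$ cut out by ordering inequalities. Points of $\spn F$ are vectors with two coincident letters. Identifying those coordinates, each point of the relative interior of $F$ becomes a strictly increasing $(m-1)$-letter alphabet $T_F$. Let $y$ be a generic such point, chosen outside every short hyperplane not identically zero on $\spn F$. By definition of $H_{m-1}(q)$, $T_F$ has an integer model $z$ of height at most $H_{m-1}(q)$ with the same $q$-zero profile as $y$.

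The point to check is that $z$, reinserted as a vector of $\spn F$, lies in the relative interior of a full-dimensional chamber of $F$. First, $z\in\spn F$: the relations vanishing on all of $\spn F$ are, after merging the coincident coordinates, exactly the short relations vanishing on the generic $y$. These include the original rows of $V$, so $z$ satisfies $V$ and is a point of $L$ with $x_i=x_{i+1}$. Second, every short functional not identically zero on $\spn F$ is nonzero at $y$ by genericity, hence nonzero at $z$ because the profiles agree. The ordering functionals other than the facet one are nonzero at $z$ because $z$ is strictly increasing after merging. So $z$ lies in the relative interior of a chamber $C_F$ of $F$.

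Next, Lemma~\ref{M-lem:chamber-extension} gives a chamber $C$ of $K_0$ with $C\cap F=C_F$. We pick a primitive extreme ray $w$ of $C$ outside $\spn F$. As in the proof of Theorem~\ref{M-thm:flag-model}, $w$ is cut out by the $r$ basis rows, $d-1$ further independent active rows (ordering rows of mass one or $q$-relations), and nothing else is needed. Lemma~\ref{M-lem:mass-minor} then bounds its height by $Hq^{d-1}\le q^{r}q^{d-1}=q^{m-2}$.

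Finally we set $x=z+w$. Every chamber functional of $C$ is nonnegative on both $z$ and $w$. If one vanished at $x$, it would vanish at $z$, hence identically on $\spn F$, and also at $w$, hence on all of $L$. For an ordering functional this contradicts the strictly ordered original point, and for a short relation it means the relation lies in $V$. So $x$ is strictly ordered, satisfies exactly the relations of $V$, and has height at most $H_{m-1}(q)+q^{m-2}$; dividing by the gcd does not increase this. I expect the main obstacle to be the first check, namely that the profile-equivalent model $z$ really sits in a relative-interior chamber of $F$. This requires carefully identifying the profile of the collapsed alphabet with the hyperplane arrangement restricted to $\spn F$, including multiplicities created by merging coordinates.
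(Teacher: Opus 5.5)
Your route is the paper's own proof: a model on an ordering facet, extension of its chamber by Lemma~\ref{M-lem:chamber-extension}, one extreme ray of height at most $Hq^{d-1}\le q^{m-2}$, and the sum $z+w$. The last three steps are correct as written.

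\textbf{The gap.} The problem is the sentence claiming that, after identifying $x_i$ and $x_{i+1}$, each relative-interior point of $F$ is a strictly increasing $(m-1)$-letter alphabet. On $\spn F=L\cap\{x_i=x_{i+1}\}$ the rows of $V$ can force further gap equalities, and choosing a different facet does not always avoid this.

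For example, take $T=(0,1,2,2+\theta,2+2\theta)$. For every $q\ge2$ its complete relation space is spanned by $t_0+t_2=2t_1$ and $t_2+t_4=2t_3$. So $d=2$, and in gap coordinates $K_0=\{(a,a,b,b):a,b\ge0\}$. Its only facets are the rays through $x=(0,0,1,2)$ and $x=(1,2,2,2)$, that is, the alphabets $(0,0,0,1,2)$ and $(0,1,2,2,2)$. Each facet forces two gap equalities and leaves only three distinct values. Thus for every facet your $T_F$ still has a repeated letter. It is not an ordered $(m-1)$-alphabet, and $H_{m-1}(q)$ cannot be applied to it.

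\textbf{The repair.} This is what the paper does: merge all coincident letters. The relative interior of $F$ has a fixed pattern of $k$ value blocks with $2\le k\le m-1$. There are at least two blocks because the point is nonzero and $x_0=0$, and at most $m-1$ because some gap vanishes on the facet.

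Model the $k$ block values of a generic point $p$ by a strictly increasing integer alphabet of diameter at most $H_k(q)$. Repeat its values along the blocks, and use $H_k(q)\le H_{m-1}(q)$ from Lemma~\ref{M-lem:elementary}(4).

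Summing coefficients within blocks never increases mass, so your two checks go through verbatim. Every row of $V$ vanishes at $z$, and every short functional not identically zero on $\spn F$ is nonzero at $z$. An ordering functional is either identically zero on $\spn F$, in which case the chamber structure of $F$ ignores it, or it is a gap between distinct blocks and hence positive at $z$. So $z$ lies in the relative interior of a chamber $C_F$ of $F$. With this modification the rest of your argument proves the recurrence.
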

\begin{proof}
We first model the coincident letters on an ordering facet, then lift
that model to a compatible chamber and check its order, relations and diameter.
Given an $m$-letter real alphabet, let $V$ be its complete $q$-relation span,
and let $K=K_0$ be the weak-order cone in its translated real kernel $L$, as
in Section~\ref{M-sec:flag}. It has full relative dimension $d$ in $L$ and
contains a strictly ordered point.

If $d=1$, a primitive ray generator gives diameter at most $q^{m-2}$ directly
by the balanced-row cofactor bound (Corollary~\ref{M-cor:max-rank}), so
\eqref{M-eq:recurrence} follows. Suppose $d\ge2$ and take an original ordering
facet $F$ of $K$, that is, a facet of the form $K\cap\{x_1=0\}$ or
$K\cap\{x_i=x_{i+1}\}$. Its nonzero relative interior has a fixed pattern of
coincident adjacent letters, hence $k$ distinct value blocks, for some
$2\le k\le m-1$: there are at least two blocks, because a single common value
with the first translated coordinate fixed at zero would give only the zero
vector, and there are at most $m-1$ blocks, because a proper ordering facet
has at least one active gap equality. Several equalities may be forced
simultaneously on $F$, so $k$ may be smaller than $m-1$.

\smallskip\noindent\emph{Model on the face.}
Choose $p$ in that relative interior avoiding every $q$-hyperplane that does
not contain $\spn F$. This is possible because there are finitely many
proper hyperplanes. Let $0=\pi_1<\cdots<\pi_k$ be the distinct values among
$0,p_1,\ldots,p_{m-1}$ and $\beta(i)$ the block of letter $i$, so that
$p_i=\pi_{\beta(i)}$. Choose an order-preserving $q$-model
$0=\pi'_1<\cdots<\pi'_k$ of $(\pi_1,\ldots,\pi_k)$ of diameter at most
$H_k(q)$ and set $u_i=\pi'_{\beta(i)}$. The resulting integer vector $u$
then has $\height(u)=\diam\le H_k(q)\le H_{m-1}(q)$.

It lies in $F$. Indeed every original $q$-relation collapses, by summing its
coefficients within each block, to a balanced relation on the $k$ blocks of
mass no larger than $q$, since summing within blocks only permits
cancellation; its value at the repeated vector equals the value of the
collapsed relation at the block values. The collapsed relation vanishes at
the block values of $p$, so the block model preserves it, and the original
relation vanishes on $u$; as these relations span $V$, $u\in L$. All the
required equalities between adjacent letters are preserved by repetition, and
all other ordering gaps are positive. Likewise every $q$-form not identically
zero on $F$ remains nonzero at $u$: it was nonzero at $p$ and collapses to a
relation of mass at most $q$ on the value blocks, whose nonvanishing the
block model preserves. Thus $u$ is in the relative interior of a full
short-hyperplane chamber $C_F$ on $F$, even when $F$ has multiple forced
equalities among the original $m$ coordinates. Its nonzero comparison signs
need not match those of $p$; what follows uses the chamber that actually
contains $u$.

\smallskip\noindent\emph{Lift to a chamber.}
Lemma~\ref{M-lem:chamber-extension} supplies an adjacent $d$-dimensional
chamber $C$ inside $K$ with $C\cap F=C_F$. Choose a primitive integer extreme
ray $v$ of $C$ outside $\spn F$; one exists because $C$ spans $L$. It is
determined by $m-2$ independent original, short-relation and ordering rows,
each of mass at most $q$: an extreme ray of the $d$-dimensional pointed
cone $C$ lies on $d-1$ independent active hyperplanes of its defining list,
by the perturbation argument in the proof of Theorem~\ref{M-thm:flag-model},
and these together with the $r$ rows spanning $V$ are $m-2$ independent
rows. So its height is at most $q^{m-2}$ by Lemma~\ref{M-lem:mass-minor}.

\smallskip\noindent\emph{Order, relations and diameter.}
The vector $u+v$ is in the relative interior of $C$. A supporting functional
of $C$ that vanished on $u$ must vanish on the entire span of $F$, and $v$
makes it strict; every other facet functional is already strict on $u$.
Equivalently, $u$ supplies all directions within $F$ and $v$ leaves that
facet. Hence $u+v$ is strictly ordered and lies on no $q$-hyperplane outside
$V$, while all original relations vanish on it. Gcd division gives the
desired primitive model, of height at most $H_{m-1}(q)+q^{m-2}$. This proves
\eqref{M-eq:recurrence}.
\end{proof}

\begin{theorem}[Theorem~\ref{M-thmF}]\label{M-thm:general-bounds}
For $m\ge3$ and $q\ge2$,
\begin{equation}\label{M-eq:two-sided}
 q^{m-2}+q^{m-3}\le H_m(q)\le q^{m-2}+H_{m-1}(q),
 \qquad\text{and also}\qquad H_m(q)\le\sum_{j=0}^{m-2}q^j .
\end{equation}
Consequently
\begin{equation}\label{M-eq:m-ge-5}
 q^{m-2}+q^{m-3}\le H_m(q)\le q^2+q^3+\cdots+q^{m-2}\qquad(m\ge5),
\end{equation}
for each fixed $m\ge4$
\begin{equation}\label{M-eq:asymptotic}
 H_m(q)=q^{m-2}+q^{m-3}+O_m(q^{m-4}),
\end{equation}
the lower endpoint of \eqref{M-eq:two-sided} is exact for $m=3,4,5$, namely
$H_3(q)=q+1$, $H_4(q)=q(q+1)$, $H_5(q)=q^2(q+1)$, and
\begin{equation}\label{M-eq:six-bracket}
 q^4+q^3\le H_6(q)\le q^4+q^3+q^2 .
\end{equation}
\end{theorem}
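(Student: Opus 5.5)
The plan is to assemble Theorem~\ref{M-thm:general-bounds} from results already proved; the remaining work is arithmetic. The upper bound $H_m(q)\le q^{m-2}+H_{m-1}(q)$ in \eqref{M-eq:two-sided} is exactly Theorem~\ref{M-thm:lifting}. The bound $H_m(q)\le\sum_{j=0}^{m-2}q^j$ is \eqref{M-eq:rank-sensitive-sum} from Theorem~\ref{M-thm:flag-model}, since $H\le q^r$.

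For the lower bound I would use the irrational witness $T=\{0,\theta\}\cup\{q^j:0\le j\le m-3\}$ of Theorem~\ref{M-thm:corank-model}. Every model of $T$ has diameter at least $q^{m-3}(q+1)=q^{m-2}+q^{m-3}$, so $H_m(q)\ge q^{m-2}+q^{m-3}$. For $m=3$ this witness is $\{0,\theta,1\}$ and the bound reads $H_3(q)\ge q+1$.

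Exactness for $m=3,4,5$ then follows quickly.
\begin{itemize}
\item For $m=3$, the recurrence together with $H_2(q)=1$ (Lemma~\ref{M-lem:elementary}) gives $H_3(q)\le q+1$.
\item For $m=4$ and $m=5$, exactness is Theorems~\ref{M-thm:sharp-four-model} and~\ref{M-thm:sharp-five-model}.
\end{itemize}

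For \eqref{M-eq:m-ge-5}, I would induct on $m\ge5$ starting from $H_5(q)=q^3+q^2$. The recurrence adds $q^{m-2}$ at each step, so $H_m(q)\le q^2+\cdots+q^{m-2}$. For $m=6$ this gives \eqref{M-eq:six-bracket} directly, with lower end $q^4+q^3$.

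For the asymptotic \eqref{M-eq:asymptotic}, the case $m=4$ is exact. For $m\ge5$ the upper bound exceeds $q^{m-2}+q^{m-3}$ by $\sum_{j=2}^{m-4}q^j=O_m(q^{m-4})$, while the lower bound matches the main terms. The only care needed is the indexing, making sure the telescoped recurrence begins at the sharp value $H_5$ rather than at the cruder flag bound. I expect no step to be a genuine obstacle.
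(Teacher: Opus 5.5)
Your proposal is correct and follows the paper's proof essentially step for step: the lower bound comes from the irrational witness of Theorem~\ref{M-thm:corank-model}, including $m=3$. The recurrence is Theorem~\ref{M-thm:lifting}, and the geometric sum is \eqref{M-eq:rank-sensitive-sum}. The range $m\ge5$ is obtained by telescoping from the independently proved value $H_5(q)=q^3+q^2$. The only cosmetic difference is that you treat the $m=4$ asymptotic by exactness, whereas the paper notes that either the gap $\sum_{j=2}^{m-4}q^j$ or the gap $\sum_{j=0}^{m-4}q^j$ to the geometric upper bound suffices.
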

\begin{proof}
Theorem~\ref{M-thm:corank-model} gives the lower family: its $m$-letter
member $\{0,\theta\}\cup\{q^j:0\le j\le m-3\}$ has rank $m-3$ and forces every
increasing integer model to have diameter at least $q^{m-3}(q+1)$. The first
upper bound is Theorem~\ref{M-thm:lifting}. For a complete relation space of
rank $r$, Theorem~\ref{M-thm:flag-model} and $H\le q^r$ give the bound
$\sum_{j=r}^{m-2}q^j$ of \eqref{M-eq:rank-sensitive-sum}, at most the displayed
geometric sum. Iterating \eqref{M-eq:recurrence} from the independently proved
value $H_5(q)=q^3+q^2$ of Theorem~\ref{M-thm:sharp-five-model} gives
\eqref{M-eq:m-ge-5}. The difference between the upper and lower bounds in
\eqref{M-eq:m-ge-5} is $\sum_{j=2}^{m-4}q^j$, empty for $m=5$, and for
the geometric upper bound in \eqref{M-eq:two-sided} it is $\sum_{j=0}^{m-4}q^j$; either proves the fixed-$m$
asymptotic statement \eqref{M-eq:asymptotic}. For $m=3$ the bounds coincide,
since $H_2(q)=1$ gives $H_3(q)\le q+1$. Theorems~\ref{M-thm:sharp-four-model}
and~\ref{M-thm:sharp-five-model} give the other two exact cases, and $m=6$ in
\eqref{M-eq:m-ge-5} is \eqref{M-eq:six-bracket}.
\end{proof}

The iteration uses the independently proved five-letter theorem as an input.
General equality at the lower endpoint of \eqref{M-eq:two-sided} for $m\ge6$
remains open; the case $(m,q)=(6,2)$ is established in the next section.

\begin{corollary}\label{M-cor:seven-two}
$48\le H_7(2)\le56$.
\end{corollary}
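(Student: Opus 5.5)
The lower bound $48\le H_7(2)$ is the lower family of Theorem~\ref{M-thm:general-bounds} at $m=7$, $q=2$. The alphabet $\{0,\theta,1,2,4,8,16\}$, with $\theta$ irrational, has rank $m-3=4$. By Theorem~\ref{M-thm:corank-model}, every increasing integer model has the form $\{0,a\}\cup\{2^jb\}$ with $b\ge3$. This forces diameter at least $2^4\cdot3=q^{m-2}+q^{m-3}=16+32=48$.

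For the upper bound I will apply the face-lifting recurrence, Theorem~\ref{M-thm:lifting}, once at $m=7$, $q=2$. This gives
\[
 H_7(2)\le 2^{5}+H_6(2).
\]
I then insert the value $H_6(2)=24$ from Theorem~\ref{M-thmC}, established in the next section, to get $H_7(2)\le32+24=56$.

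The ordering matters here, because the only nontrivial input is $H_6(2)=24$. Without it, \eqref{M-eq:six-bracket} gives only $H_6(2)\le16+8+4=28$, hence $H_7(2)\le60$. So the proof of this corollary must invoke Theorem~\ref{M-thmC}; the finite six-letter classification is the real obstacle, and the corollary is a formal consequence of it. If Theorem~\ref{M-thmC} is not yet proved at this point, the corollary should be read as depending on it. Otherwise the only remaining check is that the hypotheses of Theorem~\ref{M-thm:lifting} ($m\ge3$, $q\ge2$) are satisfied, which is immediate.
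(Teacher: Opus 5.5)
Your proposal is correct and matches the paper's proof: the lower bound $2^5+2^4=48$ comes from the rank-four irrational power family of Theorem~\ref{M-thm:corank-model}, and the upper bound is the recurrence \eqref{M-eq:recurrence} at $m=7$, $q=2$ combined with $H_6(2)=24$ from Theorem~\ref{M-thm:six-two}. There is no circularity in citing the later section, since the six-letter theorem does not use this corollary.
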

\begin{proof}
The lower bound is $2^5+2^4$, and the upper bound is $2^5+H_6(2)=32+24$ by
Theorem~\ref{M-thm:six-two} and \eqref{M-eq:recurrence}.
\end{proof}

\begin{corollary}[order-preserving Freiman copies]\label{M-cor:label}
Let $h\ge2$ and $k\ge3$. Every $k$-element set $A$ of real numbers has an
order-preserving Freiman $h$-isomorphic copy $B\subset\Z$ with
\[
 \diam B\le H_k(h)\le 1+h+h^2+\cdots+h^{k-2},
\]
and $\diam B\le h^2+\cdots+h^{k-2}$ if $k\ge5$. In particular
$|hB|=|hA|$, since the number of distinct $h$-term sums is determined by the
$h$-zero profile.
\end{corollary}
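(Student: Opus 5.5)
This corollary is a translation of the universal bounds into Freiman language, so the plan is to combine Remark~\ref{M-rem:freiman} with Theorem~\ref{M-thm:general-bounds} and then check the sumset statement directly.

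\emph{Step 1: obtain the copy.} Write $A=\{t_0<\cdots<t_{k-1}\}$. By the definition of $H_k(h)$ as a maximum, which is finite by Theorem~\ref{M-thm:flag-model}, $A$ has an order-preserving $h$-model $T'$ with $\diam T'\le H_k(h)$. By Lemma~\ref{M-lem:elementary}(2) we may take $T'$ primitive and starting at $0$. Set $B=T'$. By Remark~\ref{M-rem:freiman}, the increasing bijection $t_i\mapsto t'_i$ is a Freiman isomorphism of order $h$, because padding reduces sums of any length at most $h$ to sums of exactly $h$ terms.

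\emph{Step 2: the diameter bounds.} The general geometric bound is the second inequality in \eqref{M-eq:two-sided}, namely $H_k(h)\le\sum_{j=0}^{k-2}h^j$, valid for $k\ge3$. For $k\ge5$, use \eqref{M-eq:m-ge-5}, $H_k(h)\le h^2+\cdots+h^{k-2}$. That estimate comes from iterating Theorem~\ref{M-thm:lifting} starting at $H_5(h)=h^3+h^2$.

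\emph{Step 3: equality of sumset sizes.} Index the $h$-term sums by multisets of labels of size $h$, equivalently by nonnegative integer vectors $\alpha$ with $\sum_i\alpha_i=h$. Two such multisets $\alpha,\beta$ give the same sum in $A$ exactly when $\alpha-\beta$, a balanced vector of mass at most $h$, vanishes on $A$. The same holds for $B$.

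Since $A$ and $B$ have the same $h$-zero profile, this equivalence relation on multisets is identical for the two sets. The number of equivalence classes is $|hA|$ for $A$ and $|hB|$ for $B$, so $|hA|=|hB|$.

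There is no real obstacle here. The only point that needs care is that $\mass(\alpha-\beta)\le h$, which holds because each of $\alpha,\beta$ has total $h$ and the positive part of $\alpha-\beta$ is at most $\alpha$.
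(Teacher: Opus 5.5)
Your proposal is correct and follows the paper's proof, which consists only of citing Remark~\ref{M-rem:freiman} and Theorem~\ref{M-thm:general-bounds}. Your Step~3 spells out the step the paper leaves implicit, namely that $h$-term multisets $\alpha,\beta$ give equal sums exactly when the balanced vector $\alpha-\beta$, of mass at most $h$, lies in the $h$-zero profile, which forces $|hA|=|hB|$.
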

\begin{proof}
Remark~\ref{M-rem:freiman} and Theorem~\ref{M-thm:general-bounds}.
\end{proof}

For $h,k\ge3$, the proof of Nathanson's Theorem~8, using Lemma~1
\cite{M-Nat26}, yields a Freiman $h$-isomorphic integer copy of each
$k$-element integer set with diameter below $4(8h)^{k-1}$, without
preserving order. The theorem itself states the corresponding bound for
$N(h,k)$; see \S\ref{M-sec:intro-label} for this distinction and the
label-level bounds of \cite{M-Shin26}. The geometric sum in
\eqref{M-eq:two-sided} has a familiar counterpart for the empty profile:
Nathanson \cite{M-Nathanson25} proves this type of bound for the greedy $B_q$
sequence, whose unordered $q$-term sums are distinct, and determines its
third positive element as $q^2+q+1$. That relation-free greedy statement
neither gives the optimum $L_4(q)$ nor models arbitrary prescribed nonempty
relation profiles. The flag theorem applies to the complete original
relation kernel.

\section{Six letters at order two}\label{M-sec:six}

\begin{theorem}[Theorem~\ref{M-thmC}]\label{M-thm:six-two}
Every ordered six-element real alphabet has an order-preserving
$2$-equivalent primitive integer model of diameter at most $24$, and this
bound is sharp. Thus $H_6(2)=24$.
\end{theorem}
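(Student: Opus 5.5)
The lower bound is immediate. Theorem~\ref{M-thm:general-bounds} gives $H_6(2)\ge 2^4+2^3=24$, witnessed by $\{0,\theta,1,2,4,8\}$: every increasing model of this set must be $\{0,a,b,2b,4b,8b\}$ with $b\ge3$. The same theorem gives the upper bound $H_6(2)\le 16+8+4=28$. So everything is in showing that every six-letter profile at $q=2$ has a model of diameter at most $24$. We split by the complete short-relation rank $r\in\{0,1,2,3,4\}$.

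For rank four, Corollary~\ref{M-cor:max-rank} gives height at most $2^4=16$. For rank three, Theorem~\ref{M-thm:corank-model} gives $(q+1)q^{3}=24$. These two cases are done.

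For rank zero we need a relation-free six-element $B_2$-type alphabet of diameter at most $24$. We take the $q=2$ five-letter set $\{0,1,4,9,11\}$ and look for a sixth letter in the window $12,\dots,24$ creating no two-term coincidence. A Golomb-ruler search finds one quickly, for instance $\{0,1,4,10,12,17\}$ of diameter $17$, which is an optimal Golomb ruler. Here all differences are distinct and $a+b=2c$ is excluded, and both facts are checked by a finite table.

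The real work is ranks one and two. Theorem~\ref{M-thm:flag-model} gives only $H(1+q+q^2)$ or $H(1+q)$ with $H\le 2^r$: for rank two this is $4\cdot 7=28$, and for rank one it is $2(1+2+4+8)=30$, which exceeds even the recurrence bound. For rank one I would first try to mimic Proposition~\ref{M-prop:rank-one-mass}. At $q=2$ the primitive relation has mass $\mu=2$, so it is either $t_a+t_b=2t_c$ or $t_a+t_b=t_c+t_d$. These are finitely many support patterns up to reflection. For each pattern one writes an explicit model by inserting free letters with the devices of Lemmas~\ref{M-lem:append-qc-plus-one}, \ref{M-lem:model-near-end} and \ref{M-lem:model-append-minus-one}, aiming for diameter at most $24$. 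For rank two the two primitive relations both have mass $2$, and the span $V$ is determined by which pairs of such relations vanish. The face-lifting argument of Theorem~\ref{M-thm:lifting} gives $16+H_5(2)=28$. We improve it by choosing the facet $F$ adaptively so that the block model on $F$ has diameter at most $8$ or the lifted ray has height at most $12$, using the mass-drop estimate \eqref{M-eq:model-mass-drop} when collapsing lowers relation masses.

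The main obstacle is that the rank-one and rank-two profiles at $q=2$ are numerous and mutually constrained: a given relation may force further $2$-relations to vanish, which changes the rank. I would therefore make this step a finite classification. We enumerate all candidate rational spans $V$ generated by mass-$2$ rows on six letters. For each we decide realizability, meaning whether $V$ is exactly the complete profile of some strictly increasing real alphabet; this is an exact linear-feasibility question on the ordered kernel cone with strict inequalities for every non-member relation, settled by rational certificates. For each realizable profile we exhibit an explicit integer model of diameter at most $24$ and verify its profile directly. This matches the ``exact feasibility certificates and integer models'' described in the introduction, with the data relegated to the supplement. Before doing so, I would use the hand arguments above to discharge as many patterns as possible, so that the computer classification only covers the residual low-rank profiles.
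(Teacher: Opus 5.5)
Your proposal is correct and is essentially the paper's proof. Rank four is handled by Corollary~\ref{M-cor:max-rank}, rank three by Theorem~\ref{M-thm:corank-model}, and the lower bound by $\{0,\theta,1,2,4,8\}$. Ranks at most two go through an exact finite classification: the paper takes the $487$ row-space closures of at most two of the $35$ order-two hyperplanes and finds $342$ of them realizable. Realizability of a closed flat needs only a strictly positive gap vector in the kernel, since a generic such point avoids the non-member relations automatically. The other $145$ flats are excluded by nonnegative gap-row combinations, and models for the realizable ones come from enumerating all $\binom{24}{5}$ integer alphabets of diameter at most $24$.

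Your hand-argument detours for ranks one and two are not needed. There is also a small slip in rank zero: $\{0,1,4,10,12,17\}$ does not extend $\{0,1,4,9,11\}$, whose least valid extension is $\{0,1,4,9,11,23\}$. It is nonetheless a Sidon set, and it is exactly the paper's rank-zero minimum model.
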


The statement is only for $q=2$, and nothing about
$H_6(q)$ for $q\ge3$ is inferred from it.

Translate the first letter to zero and let $r$ be the rank of the complete
short-relation space in five translated coordinates. A nonzero ordered point
in its kernel forces $r\le4$. The proof separates these ranks as follows.
\begin{center}
\begin{tabular}{@{}rrl@{}}
\toprule
rank $r$ & feasible profiles & largest minimum diameter, or upper bound\\
\midrule
$0$ & $1$ & $17$, exact for this profile\\
$1$ & $35$ & $22$, exact maximum in this rank\\
$2$ & $306$ & $24$, exact maximum in this rank\\
$3$ & not enumerated & at most $24$ by Theorem~\ref{M-thm:corank-model}\\
$4$ & not enumerated & at most $16$ by Corollary~\ref{M-cor:max-rank}\\
\bottomrule
\end{tabular}
\end{center}
The rank-three bound is $(2+1)2^3=24$, and rank four has a unique rational
kernel direction of primitive diameter at most $2^4=16$. The corank-two
family $\{0,\theta,1,2,4,8\}$, with
irrational $0<\theta<1$, forces diameter at least $24$, and its model
$\{0,1,3,6,12,24\}$ attains that value. The finite low-rank classification
below supplies the only remaining universal upper-bound cases.

\subsection{All order-two hyperplanes}
After canceling common terms in a two-term-sum equality, every nontrivial
equality on strictly increasing letters is either
\[
 t_i+t_k=2t_j\quad(i<j<k)\qquad\text{or}\qquad
 t_i+t_l=t_j+t_k\quad(i<j<k<l).
\]
There are $\binom63=20$ of the first kind and $\binom64=15$ of the second,
for $35$ distinct hyperplanes. The other four-distinct-letter pairings are
termwise unequal, and an equality involving only two distinct letters would
contradict strict order. These cases exhaust all balanced relations of mass
at most two. The full list, in the order used by the data files, is given in
the technical supplement.

For positive gaps $g_j=t_j-t_{j-1}$, $j=1,\ldots,5$, a balanced relation
$\delta$ has gap coefficients
\[
 A_j=\sum_{i=j}^5\delta_i,\qquad \delta\cdot T=\sum_jA_jg_j .
\]
The cumulative-sum coordinate map is invertible over the integers, so it
preserves relation ranks and row-space closure.

\subsection{The finite classification}
The finite proof checks three different assertions. The row-space calculation
lists every candidate rank-at-most-two relation pattern. For each candidate,
either an ordered integer model establishes feasibility, or an explicit
nonzero nonnegative combination of the gap rows excludes all strictly positive
gap vectors. Finally, enumeration of every integer alphabet of diameter at
most 24 establishes the minimum diameter for each feasible pattern. The
mathematical reductions explain why these finite lists cover real alphabets;
the execution records verify the lists themselves.

\Needspace{8\baselineskip}
\begin{proposition}\label{M-prop:six-classification}
Let $q=2$ and $m=6$.
\begin{enumerate}
\item Among the $1+35+\binom{35}2=631$ subsets of at most two of the $35$
 hyperplane rows, the rational row-space closures (the set of all $35$ rows
 lying in the span of the subset) give exactly $487$ distinct flats. The
 $2$-zero profile of every ordered real six-letter alphabet of complete
 short-relation rank at most two is one of these $487$ flats.
\item Exactly $342$ of the flats, namely $1$, $35$ and $306$ of ranks $0$,
 $1$ and $2$, contain a strictly ordered real alphabet. For each of the
 other $145$ flats, all of rank two, there is an explicit nonzero nonnegative
 integer vector in gap coordinates that is an integer linear combination of
 the two basis gap rows; such a vector has positive inner product with every
 strictly positive gap vector, whereas the basis equations would make that
 product zero, so the flat contains no strictly ordered real alphabet.
\item Each of the $342$ feasible flats has a primitive strictly increasing
 integer model beginning at $0$ of diameter at most $24$ with exactly that
 zero profile. The minimum diameters are exact, and their maxima are $17$,
 $22$ and $24$ in ranks $0$, $1$ and $2$.
\end{enumerate}
\end{proposition}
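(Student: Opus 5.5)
The plan follows the three parts of the statement. Each part reduces to a finite check that I would carry out by exact rational and integer arithmetic, recorded in the data files; no real parameters are sampled.

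For part (1), I would work in gap coordinates. By the cumulative-sum map each of the $35$ relations becomes an integer row $(A_1,\ldots,A_5)$, and this map preserves ranks and spans. I would enumerate the $1+35+\binom{35}{2}=631$ subsets of at most two rows. For each subset I compute its rank over $\Q$ and its closure, meaning the set of all $35$ rows lying in its span. I then deduplicate the closures; the claimed outcome is $487$ flats.

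The mathematical content of (1) is that every profile of rank at most two is such a closure. Let $T$ have complete rank $r\le2$. By definition its vanishing relations span $V$ with $\dim V=r$, so some $r$ of them form a basis. Every vanishing row lies in $V$. Conversely, every one of the $35$ rows lying in $V$ vanishes on $T$, because all of $V$ vanishes on $x$. Hence the profile restricted to these hyperplanes equals the closure of the chosen basis. Since the $35$ hyperplanes exhaust all nontrivial mass-$\le2$ relations up to sign and trivial multiples, that closure determines $Z_2(T)$.

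For part (2), I would decide feasibility of each flat by a Farkas-type dichotomy. A flat with span $W$ contains a strictly ordered alphabet having exactly that profile if and only if there is a $g>0$ with $g\perp W$ and $g$ lying on none of the other rows. Since the flat is a closure, the second condition can always be arranged by a small generic perturbation inside $W^\perp\cap\{g>0\}$, provided that open set is nonempty. So feasibility amounts to nonemptiness of $W^\perp\cap\R^5_{>0}$. By Gordan's alternative this fails exactly when $W$ contains a nonzero nonnegative vector. Because $W$ is rational, such a vector can be taken to be an integer combination of the basis rows. That vector is the certificate I would exhibit for each of the $145$ infeasible flats. For the $342$ feasible flats, the explicit integer models in part (3) serve as the positive witnesses, which also confirms the counts $1,35,306$ by rank.

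For part (3), I would exhaustively enumerate all primitive strictly increasing integer $6$-sets $\{0<s_1<\cdots<s_5\}$ with $s_5\le24$. For each set I compute its vanishing set among the $35$ rows and record the least diameter attaining each profile. Restricting to the profiles of rank at most two, I would check that all $342$ feasible flats occur, and read off the per-rank maxima $17$, $22$, $24$. Exactness follows because the enumeration is complete up to each diameter. Realizing the model's vanishing set as a profile is legitimate because an integer alphabet's own profile is exactly its vanishing set.

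The main obstacle is the exhaustiveness argument rather than the computation. One must justify that the closure computed over the $35$ rows captures the complete short-relation space, including multiples of mass $\le2$ and relations on repeated letters, so that no real profile escapes the list. One must also justify that the perturbation step in part (2) yields the exact profile and not a larger one. For the computational side, the enumeration contains about $\binom{24}{4}\cdot 24$ candidate sets, which is entirely manageable.
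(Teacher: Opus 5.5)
Your proposal is correct and is essentially the paper's argument, in the form of its independent verifier: you identify each profile with the closure of a basis for (1), you certify each flat by either an integer model with exactly that mask or a nonzero nonnegative integer combination of the two basis gap rows for (2) (your Gordan and generic-perturbation remark just explains why one of these must exist), and you enumerate exhaustively up to diameter $24$ for (3). The only slip is the size estimate: the candidates are the $\binom{24}{5}=42{,}504$ choices of five positive letters rather than about $\binom{24}{4}\cdot 24$, and restricting to primitive sets is harmless because gcd division preserves the profile and does not increase the diameter.
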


\begin{proof}
A real profile of rank $r\le2$ has a basis of $r$ rows among the $35$
hyperplanes. Its full zero mask is exactly the closure of that basis among
all $35$ rows: every row in the span vanishes, and every vanishing row
belongs to the full span by definition. Different bases with the same
closure have the same rational row space, so deduplicating by the closure is
valid. There are only $631$ candidate basis subsets, and their row-space
closures give $487$ distinct flats; thus any ordered real rank-zero,
rank-one or rank-two profile occurs among these $487$ candidates. No sample
of real parameters is used in this enumeration. This proves (1).

The computation was carried out twice, by two separate exact
implementations. The producer determines strict-positive feasibility by exact
rational vertices of $\{Eg=0,\ g\ge0,\ \sum g=1\}$, where $E$ is the basis in
gap coordinates. The independent verifier uses an alternative exact
certificate that is enough on its own. For $342$ flats it supplies a strictly
increasing integer model, with exactly that zero mask and diameter at most
$24$; these are $1$, $35$, $306$ flats in ranks $0$, $1$, $2$. For each of
the other $145$ flats it supplies an explicit nonzero nonnegative gap row
that is an integer linear combination of its two basis rows. Such a row has
positive dot product with every strictly positive gap vector, whereas the
basis equations would make that product zero. Therefore the flat contains no
strictly ordered real alphabet. These two lists prove the real-domain
classification completely. The $145$ exact row combinations are retained in
the data package. This proves (2).

The verifier derives the $35$ hyperplanes anew from differences of the $21$
multiplicity vectors of unordered two-term sums, allowing repeated letters, discards exactly those primitive rows
whose gap coefficients cannot have both signs (a nonzero row of one weak sign
cannot annihilate a vector with every gap positive, and a row with both signs
has a positive-gap solution), and verifies agreement with the hyperplane
list. It recomputes every candidate closure using rank over the field of
$101$ elements in the original translated-letter coordinates. This rank
calculation is exact over the rationals in this setting: at most three rows
are used when testing whether one row belongs to a basis of size at most two,
every entry has absolute value at most two, so every minor has absolute value
at most $3!\cdot2^3=48<101$, and a nonzero minor therefore cannot vanish
modulo $101$. The resulting closure masks agree with all $487$ stored flats.
For each infeasible flat the verifier forms the recorded integer combination
of its two recomputed gap rows and checks that every coefficient is
nonnegative and at least one is positive.

The integer alphabets beginning at zero and of diameter at most $24$ are
obtained by choosing five positive entries from $\{1,\ldots,24\}$; their
number is $\binom{24}5=42{,}504$. The producer evaluates their literal
two-term-sum collisions; the verifier instead evaluates each of the $35$
original hyperplane equations directly. Both obtain the same complete
minimum-model dictionary for all $342$ feasible low-rank profiles, after
minimizing diameter and then lexicographic order. Every claimed profile
occurs within the cutoff, and all smaller candidate diameters have been
included, so the minimum diameters are exact, not merely constructive upper
bounds. Every minimum model is primitive: dividing a nontrivial common gcd
would otherwise give a smaller model of the same zero profile. This proves
(3).
\end{proof}

The unique rank-zero profile has minimum model $\{0,1,4,10,12,17\}$. The two
rank-one profiles with minimum diameter $22$ are
\begin{center}
\begin{tabular}{@{}ll@{}}
\toprule
primitive relation & minimum model\\
\midrule
$(1,-2,0,0,0,1)$ & $\{0,11,12,15,20,22\}$\\
$(1,0,0,0,-2,1)$ & $\{0,1,4,9,11,22\}$\\
\bottomrule
\end{tabular}
\end{center}
There are four rank-two profiles with minimum diameter $24$; their models
are
\[
 \{0,6,7,10,12,24\},\quad
 \{0,12,18,19,22,24\},\quad
 \{0,12,13,16,18,24\},\quad
 \{0,1,4,6,12,24\}.
\]
The full $342$-profile model list and basis rows are retained in the data
package; the technical supplement describes its organization and how to
replay both computations. The maxima $22$ and $24$ have all-parameter
structural explanations (Corollary~\ref{M-cor:six-explanations}).

\begin{proof}[Proof of Theorem~\ref{M-thm:six-two}]
Let $T$ be an ordered six-element real alphabet of complete short-relation
rank $r$ at $q=2$. If $r=4$, Corollary~\ref{M-cor:max-rank} gives a primitive
model of diameter at most $2^4=16$. If $r=3$, Theorem~\ref{M-thm:corank-model}
gives one of diameter at most $(2+1)2^3=24$. If $r\le2$, the profile of $T$
is one of the $342$ feasible flats of
Proposition~\ref{M-prop:six-classification}, which has a primitive model of
diameter at most $24$. This proves the upper bound. For the lower bound,
Theorem~\ref{M-thm:corank-model} with $m=6$ and $q=2$ shows that every
order-preserving $2$-model of $\{0,\theta,1,2,4,8\}$ has diameter at least
$2^3(2+1)=24$: the three independent mass-two power relations force any
increasing integer $2$-model to be $\{0,a,b,2b,4b,8b\}$, and $b\le2$ would
create the extra relation $(a-b,b,-a,0,0,0)$ of mass $b$, contradicting
irrationality; hence $b\ge3$. The model $\{0,1,3,6,12,24\}$ attains equality,
since modulo $3$ every short relation forces the coefficient of $1$ to vanish
and leaves precisely the original power relations.
\end{proof}

\begin{remark}[proof boundaries]\label{M-rem:six-boundaries}
The two programs are separate exact implementations. The producer's rational
vertex method and the verifier's nonnegative dual combinations are different
strict-feasibility certificates, and the integer model checks use different
descriptions of two-term equalities. The arithmetic is exact throughout:
exact fractions in the producer, exact rank modulo $101$ with the integer
minor bound $48$ in the verifier, explicit nonnegative dual rows for every
infeasible flat, and direct integer hyperplane evaluation of every model. In
particular the infeasibility conclusion does not rest on an LP return code,
a floating-point tolerance, or an SMT \textsc{unsat} assertion. The
general-$q$ six-letter radius is not established by this finite
classification.
\end{remark}

\begin{remark}[attribution]\label{M-rem:six-attribution}
The stratification of ordered types by relation rank is the framework of
Shin's Corollary~8.2 and Theorem~8.4 \cite{M-Shin26}; the counts $1$, $35$,
$306$ computed above evaluate the corresponding $\tau_{6,r}(2)$. For
the sign-preserving problem at $(m,q)=(6,2)$ his Corollary~8.8 gives the
bound $80$, and the rank-four piece $16$ is his Corollary~7.3. The value
$24$ and the certificates are new.
\end{remark}

\section{Exact families}\label{M-sec:cluster}

\subsection{Cluster families in any alphabet size}
Recall that $L_k(q)$ is the least diameter of a relation-free $k$-letter
integer alphabet, $k\ge3$; such alphabets exist by
Lemma~\ref{M-lem:append-qc-plus-one}.

\begin{theorem}\label{M-thm:cluster-k}
Let $k\ge3$ and $q\ge2$. Choose $0<\theta_1<\cdots<\theta_{k-2}<1$ with $1$
and all $\theta_i$ rationally independent, and consider the $(k+1)$-letter
rank-one alphabet
\[
 A=\{0,\ q-1,\ q-1+\theta_1,\ \ldots,\ q-1+\theta_{k-2},\ q\}.
\]
Its exact minimum model diameter is
\begin{equation}\label{M-eq:cluster-k}
 M_q(A)=qL_k(q).
\end{equation}
\end{theorem}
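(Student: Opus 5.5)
The plan is to prove the two inequalities $M_q(A)\ge qL_k(q)$ and $M_q(A)\le qL_k(q)$ separately. In both, the profile of $A$ is pinned down first. Index the letters of $A$ as $t_0=0$, $t_1=q-1$, $t_{1+i}=q-1+\theta_i$ for $1\le i\le k-2$, and $t_k=q$. The identity $q(q-1)=(q-1)q+1\cdot 0$ gives the balanced relation
\[
\delta=(-1,\ q,\ 0,\ldots,0,\ -(q-1)),
\]
which has mass $q$ and vanishes on $A$.

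First I will show that every vanishing $q$-relation on $A$ is an integer multiple of $\delta$. Write such a relation as $\varepsilon$. Because $1,\theta_1,\ldots,\theta_{k-2}$ are rationally independent, the coefficients of $\varepsilon$ at the $\theta$-letters must vanish. What remains is a relation on $\{0,q-1,q\}$, and the only balanced relations vanishing there are the multiples of $\delta$. So the profile has rank one and is spanned by $\delta$. The same independence argument shows a second fact: the $k$-letter cluster $\{q-1,\,q-1+\theta_1,\ldots,q\}$ carries no nonzero balanced relation of any mass.

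For the lower bound, let $\{0<a<c_1<\cdots<c_{k-2}<b\}$ be an increasing integer model. The relation $\delta$ forces $qa=(q-1)b$. Since $\gcd(q,q-1)=1$, this gives $a=(q-1)s$ and $b=qs$ for some integer $s\ge1$. The cluster $\{a,c_1,\ldots,c_{k-2},b\}$ consists of $k$ distinct integers in an interval of length $s$. A nonzero balanced relation of mass at most $q$ supported on the cluster would be preserved by the model, so it would vanish on $A$. That contradicts the second fact above. Hence the cluster, translated by $-a$, is a relation-free $k$-letter alphabet of diameter $s$. Therefore $s\ge L_k(q)$, and the diameter $qs$ is at least $qL_k(q)$.

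For the upper bound, take a relation-free $S=\{0=\sigma_0<\cdots<\sigma_{k-1}=L\}$ with $L=L_k(q)$. The proposed model is
\[
\{0\}\cup\{(q-1)L+\sigma_j:\ 0\le j\le k-1\},
\]
whose diameter is $qL$. It realizes $\delta$ because $q(q-1)L=(q-1)qL$. The work is to show it has no other vanishing relation.

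Take a vanishing relation of mass at most $q$ with coefficient $x$ at $0$ and coefficients $y_j$ on the cluster. Balance gives $x=-\sum_j y_j$, and vanishing gives $\sum_j y_j\sigma_j=x(q-1)L$. If $x=0$, relation-freeness of $S$ kills the relation. Otherwise, after a sign change, write $x=-t$ with $t\ge1$. Let $P$ and $N$ be the positive and negative cluster masses. Then $P-N=t$ and $P\le q$. Bounding $\sum_j y_j\sigma_j\ge -NL\ge-(q-t)L$ and comparing with $-t(q-1)L$ gives $tq\le q$, so $t=1$. Then all the inequalities become equalities: $N=q-1$ with all negative weight on $\sigma_{k-1}=L$, and $P=q$ with all positive weight on $\sigma_0=0$. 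This is exactly a multiple of $\delta$.

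The main obstacle is this equality-case bookkeeping: the mass and sign estimate must leave only multiples of $\delta$ and nothing else. Everything else is direct. Primitivity is not an issue, since dividing by a gcd can only lower the diameter, and the lower bound already forces diameter exactly $qL_k(q)$.
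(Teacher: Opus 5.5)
Your proof is correct and follows essentially the same route as the paper. In the lower bound, $\delta$ pins the second and last images to $(q-1)s$ and $qs$, and the translated cluster must be a relation-free $k$-set of diameter $s$. For the construction, your mass estimate $tq\le q$ is the paper's inequality $p\ge zq$, and its equality case leaves only $\pm\delta$.
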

\begin{proof}
Rational independence
leaves $(1,-q,0,\ldots,0,q-1)$ as the sole primitive short relation, up to
sign. In an increasing integer model beginning at zero it fixes the second
and last integer images to $(q-1)u,qu$ with $u>0$. The last $k$ letters,
translated by $-(q-1)u$, must be a relation-free integer $k$-set of diameter
$u$, because their real counterpart $\{0,\theta_1,\ldots,\theta_{k-2},1\}$
has no balanced rational relation; so $u\ge L_k(q)$.

Conversely, from a relation-free set $\{0,a_1,\ldots,a_{k-2},c\}$, form
\[
 \{0,\ (q-1)c,\ (q-1)c+a_1,\ \ldots,\ (q-1)c+a_{k-2},\ qc\}.
\]
A relation not using zero would contradict the relation-free translated
core. In a reduced relation of mass $p\le q$ with zero appearing $z\ge1$
times on one side, the remaining terms lie between $(q-1)c$ and $qc$.
Equality requires $(p-z)q\ge p(q-1)$, so $p\ge zq$. Therefore $p=q$, $z=1$
and all terms are forced to the two endpoints. This is exactly the
prescribed relation. Taking $c=L_k(q)$ proves \eqref{M-eq:cluster-k}. The constructed model
is primitive, since its coordinate gcd is the gcd of the minimum core model.
\end{proof}

\subsection{The five-letter case}
\begin{corollary}\label{M-thmG}\label{M-thm:cluster-model}
Choose $0<\theta<\eta<1$ such that $1,\theta,\eta$ are rationally
independent. The minimum diameter of an order-preserving $q$-model of
\[
 \{0,q-1,q-1+\theta,q-1+\eta,q\}
\]
is exactly $qL_4(q)$, for every $q\ge2$.
\end{corollary}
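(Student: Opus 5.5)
This is the case $k=4$ of Theorem~\ref{M-thm:cluster-k}, and I would prove it by specializing that argument to $\theta_1=\theta$ and $\theta_2=\eta$. The alphabet $A=\{0,q-1,q-1+\theta,q-1+\eta,q\}$ has five letters. With $1,\theta,\eta$ rationally independent, a balanced relation $\delta$ vanishes on $A$ only if its coefficients on $\theta$ and on $\eta$ vanish separately. So $\delta_2=\delta_3=0$, and what remains is a balanced relation on $\{0,q-1,q\}$. Such relations are the multiples of $(1,-q,q-1)$, which has mass $q$, so the complete $q$-profile is the rank-one line of $(1,-q,0,0,q-1)$, with only $\pm$ that vector at order $q$.

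\emph{Lower bound.} Let $\{0,s_1,s_2,s_3,s_4\}$ be an increasing integer model. The preserved relation gives $qs_1=(q-1)s_4$, so with $g=\gcd$ we have $s_1=(q-1)u$ and $s_4=qu$ for a positive integer $u$. Now consider the four letters $s_1<s_2<s_3<s_4$, translated by $-s_1$. By Lemma~\ref{M-lem:elementary}(3) they form a $q$-model of the real sub-alphabet $\{q-1,q-1+\theta,q-1+\eta,q\}$. After translation that sub-alphabet is $\{0,\theta,\eta,1\}$, which has no nonzero rational balanced relation. The translated integer set is therefore a relation-free four-set of diameter $u$, so $u\ge L_4(q)$ and the diameter satisfies $qu\ge qL_4(q)$.

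\emph{Upper bound.} Take an optimal relation-free set $\{0,a,b,c\}$ with $c=L_4(q)$; by Remark~\ref{M-rem:four-ranks} this is $T_q$. Form
\[
 \{0,(q-1)c,(q-1)c+a,(q-1)c+b,qc\}.
\]
Take a relation of mass $p\le q$, reduced so that no letter appears on both sides. If it does not use $0$, it is a balanced relation on the translated core and hence trivial. If $0$ appears $z\ge1$ times, say on the left, the left sum is at most $(p-z)qc$ and the right sum is at least $p(q-1)c$. Equality forces $(p-z)q\ge p(q-1)$, that is $p\ge zq$, hence $p=q$ and $z=1$. Then every inequality is tight: the left side consists of $q-1$ copies of $qc$ and the right side of $q$ copies of $(q-1)c$, which is the prescribed relation. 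The prescribed relation does hold, so the model has exactly the right profile, and its diameter is $qc=qL_4(q)$.

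The main point to check is that the extremal estimate leaves no room for mixed relations. This works because every nonzero letter lies in $[(q-1)c,qc]$ and $q(q-1)c=(q-1)qc$ is an exact identity. Primitivity is not needed for the minimum-diameter statement.
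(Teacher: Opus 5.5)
Your proposal is correct and is exactly the paper's argument: the paper proves this corollary by applying Theorem~\ref{M-thm:cluster-k} with $k=4$, and that proof uses the same rational-independence reduction to the single relation $(1,-q,0,0,q-1)$, the same lower bound through the relation-free translated core $\{0,\theta,\eta,1\}$, and the same extremal estimate $p\ge zq$ for the model $\{0,(q-1)c,(q-1)c+a,(q-1)c+b,qc\}$. You do not need to identify the optimal relation-free set with $T_q$; it is enough that some relation-free four-set attains $L_4(q)$, which holds by the definition of $L_4(q)$.
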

\begin{proof}
Apply Theorem~\ref{M-thm:cluster-k} with $k=4$.
\end{proof}

Distinct unordered $q$-term sums of a relation-free four-set give
$\binom{q+3}{3}$ distinct integers in $[0,qL_4(q)]$. Thus
\[
 L_4(q)\ge\left\lceil\frac{\binom{q+3}{3}-1}{q}\right\rceil
 =\left\lceil\frac{q^2+6q+11}{6}\right\rceil.
\]
Together with the explicit quadratic four-set, this shows that the exact
family above has model diameter of order $q^3$. The exact value of
$L_4(q)$ is determined by Shin \cite[Theorem~10.3]{M-Shin26}:
$L_4(q)=(q+1)(q+2)/2+[q\text{ odd}]$ (Remark~\ref{M-rem:four-ranks}). At $q=2$
and $q=3$ the relation-free four-sets $\{0,1,4,6\}$ and $\{0,1,7,11\}$ give
the models $\{0,6,7,10,12\}$ and $\{0,22,23,29,33\}$ of diameters $12$
and $33$.

\subsection{Appending $q$ times the maximum}
For a finite ordered real alphabet $T$ beginning at zero, with largest letter
$C>0$, recall that $M_q(T)$ denotes its minimum ordered integer $q$-model
diameter.

\begin{lemma}\label{M-lem:append-qmax}
Let $q\ge2$ and let $T$ be as above. Then
\begin{equation}\label{M-eq:append-qmax}
 M_q(T\cup\{qC\})=q\,M_q(T).
\end{equation}
The only new primitive $q$-relation of $T\cup\{qC\}$ is
$(q-1)\cdot0-q\cdot C+qC=0$; the short-relation rank increases by exactly
one, and primitivity of a minimum core model is preserved. In particular
$H_{m+1}(q)\ge qH_m(q)$.
\end{lemma}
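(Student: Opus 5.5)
We prove \eqref{M-eq:append-qmax} by showing that every model of $T'=T\cup\{qC\}$ is forced to have last letter $q$ times the image of $C$, and, conversely, that appending $q$ times the maximum to any model of $T$ introduces no unwanted relation. Write $T=(0=t_0<\cdots<t_{m-1}=C)$ and $T'=T\cup\{qC\}$.

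\emph{The new relations.} Let $\delta$ be a vanishing $q$-relation on $T'$ whose coefficient $\epsilon$ at $qC$ is nonzero; orient it so that $\epsilon>0$. The positive side is a sum of at most $q$ letters of $T'$ containing $qC$, so it is at least $qC$. The negative side is a sum of at most $q$ letters; it cannot contain $qC$ after cancellation, so it is at most $qC$. Equality forces four things: $\epsilon=1$, the positive side is exactly $qC$ padded by zeros, the negative side has mass exactly $q$, and every negative letter equals $C$. Hence, up to adding equal letters to both sides (which cancel), the only new relation is $qC=q\cdot C$, that is, $(q-1)t_0-qt_{m-1}+\,qC=0$. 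Its multiples have mass at least $2q$, so they are not short. The vanishing relations of $T'$ are therefore those of $T$ together with this one relation, and the rank rises by exactly one because the new row involves the new coordinate.

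\emph{Lower bound.} Let $S'=(0=s_0<\cdots<s_m)$ be a model of $T'$, translated so that it starts at $0$. The new relation must hold on $S'$, so $s_m=qs_{m-1}$. By Lemma~\ref{M-lem:elementary}(3), $(s_0,\ldots,s_{m-1})$ is a model of $T$, so $s_{m-1}\ge M_q(T)$. Hence $\diam S'\ge qM_q(T)$.

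\emph{Upper bound.} Take a minimum model $S=(0=s_0<\cdots<s_{m-1}=c)$ of $T$, with $c=M_q(T)$, and append $qc$. Running the extremal argument of the first step on the integers $S\cup\{qc\}$ shows the same thing: any vanishing $q$-relation using $qc$ is the single relation $qc=q\cdot c$. Relations that avoid the last letter vanish on $S$ exactly when they vanish on $T$. So $S\cup\{qc\}$ is an increasing model of $T'$ of diameter $qc$, and it is primitive whenever $S$ is, because its letters have the same gcd as those of $S$. This proves \eqref{M-eq:append-qmax}.

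\emph{Consequence.} Choose $T$ with $M_q(T)=H_m(q)$. Then $M_q(T')=qH_m(q)$, so $H_{m+1}(q)\ge qH_m(q)$.

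The only delicate point is the equality analysis, which must account for the zero letter padding the positive side, for cancellation of common letters, and for repeated letters; once these are handled, the argument is identical in the real and integer settings.
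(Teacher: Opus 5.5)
Your proof is correct and follows the paper's argument essentially step for step. The extremal-sum comparison (positive side at least $qC$, negative side at most $qC$) isolates the single new relation $(q-1)t_0-qt_{m-1}+qC=0$. Restriction to the old labels via Lemma~\ref{M-lem:elementary}(3) gives the lower bound, and appending $q$ times the maximum of a minimum model, checked by the same extremal argument, gives the matching upper bound, with the rank increase and gcd preservation justified exactly as in the paper.
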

\begin{proof}
To identify every new short relation, cancel common terms and orient the
coefficient of $qC$ positively. Its positive numerical sum is at least $qC$.
Its negative sum uses at most $q$ letters of $T$, each at most $C$, and is
therefore at most $qC$. Equality forces the new coefficient to be one, every
negative term to be $C$, and every other positive term to be zero. The only
new primitive $q$-relation is thus
\[
 (q-1)\cdot0-q\cdot C+qC=0 .
\]
All other $q$-relations are precisely those on the original core. In an
increasing integer model translated to start at zero, this new relation
forces its last coordinate to be $q$ times the old last coordinate.
Restricting to the old labels gives a $q$-model of $T$
(Lemma~\ref{M-lem:elementary}), proving the lower bound in
\eqref{M-eq:append-qmax}. Conversely append $q$ times the maximum to a minimum
model of $T$; the same extremal-sum argument proves that no unwanted
$q$-relation appears, giving equality.

The short-relation rank increases by exactly one, since the new row uses the
new coordinate and every old row has zero there. Primitivity of a minimum
core model is preserved. The final inequality follows by applying
\eqref{M-eq:append-qmax} to an $m$-letter alphabet with $M_q(T)=H_m(q)$.
\end{proof}

The monotone lower estimate $H_{m+1}(q)\ge qH_m(q)$ does not determine the
higher-letter upper bound.

\subsection{Consequences for six letters at order two}
\begin{corollary}\label{M-cor:six-explanations}
\leavevmode
\begin{enumerate}
\item $L_5(2)=11$, attained by $\{0,1,4,9,11\}$, and the rank-one
 six-letter family of Theorem~\ref{M-thm:cluster-k} with $k=5$ and $q=2$
 has minimum diameter $22$, attained by the model
 $\{0,11,12,15,20,22\}$; this is the rank-one maximum of
 Proposition~\ref{M-prop:six-classification}.
\item Appending $q^2$ to the five-letter cluster alphabet of
 Corollary~\ref{M-thm:cluster-model} gives a six-letter alphabet of rank two
 with exact minimum diameter $q^2L_4(q)$. At $q=2$ this is $24$, with model
 $\{0,6,7,10,12,24\}$, one of the rank-two maxima of
 Proposition~\ref{M-prop:six-classification}. At $q=3$ it is $99$, below the
 universal six-letter lower bound $108$ of Theorem~\ref{M-thm:general-bounds}.
\item Repeating the power extension after the five-letter cluster gives, for
 every $m\ge5$, an $m$-letter family of short-relation rank $m-4$ and exact
 minimum diameter $q^{m-4}L_4(q)$; at $q=2$ this is $3\cdot2^{m-3}$.
\end{enumerate}
\end{corollary}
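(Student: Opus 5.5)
The three parts can each be obtained from Theorem~\ref{M-thm:cluster-k}, Lemma~\ref{M-lem:append-qmax} and Corollary~\ref{M-thm:cluster-model}, plus a small finite check for $L_5(2)$.

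For part (1), I would prove $L_5(2)=11$ directly. The upper bound needs a check that the fifteen unordered two-term sums of $\{0,1,4,9,11\}$ are distinct; this was already noted in the proof of Theorem~\ref{M-thm:sharp-five-model}. For the lower bound, a relation-free five-set at $q=2$ is a Sidon set with no three-term arithmetic progression (repetitions allowed). I would show that no such set $\{0<a<b<c<d\}$ exists with $d\le10$. This can be done either by citing the exhaustive search already recorded in Proposition~\ref{M-prop:six-classification}, whose enumeration contains all five-subsets of $[0,24]$, or by a short hand case analysis. The cleaner route is to take it from the finite data, since the rank-zero five-letter profile is a sub-profile question checked by the same enumeration. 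With $L_5(2)=11$, Theorem~\ref{M-thm:cluster-k} at $k=5$, $q=2$ gives minimum diameter $2\cdot11=22$. The explicit model is $\{0,(q-1)c,(q-1)c+a_i,qc\}$ with $c=11$, which is $\{0,11,12,15,20,22\}$. Matching it with the table of rank-one maxima in Proposition~\ref{M-prop:six-classification} finishes part (1).

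For part (2), I would apply Lemma~\ref{M-lem:append-qmax} twice to the five-letter cluster alphabet $A=\{0,q-1,q-1+\theta,q-1+\eta,q\}$, whose largest letter is $q$. Appending $q\cdot q=q^2$ gives $M_q(A\cup\{q^2\})=qM_q(A)=q^2L_4(q)$ by Corollary~\ref{M-thm:cluster-model}. The lemma also gives that the rank rises from one to two. At $q=2$, $L_4(2)=6$ gives $24$. The model is obtained by doubling the last letter of $\{0,6,7,10,12\}$, which yields $\{0,6,7,10,12,24\}$; this appears in the list of rank-two maxima. At $q=3$, $L_4(3)=11$ (Remark~\ref{M-rem:four-ranks}) gives $9\cdot11=99<108=3^4+3^3$.

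For part (3), I would induct on $m$ using Lemma~\ref{M-lem:append-qmax}. Each step appends $q$ times the current maximum, which multiplies the exact minimum diameter by $q$ and raises the rank by one. The base case $m=5$ is Corollary~\ref{M-thm:cluster-model}, with rank one and value $qL_4(q)$. After $m-5$ steps the rank is $m-4$ and the value is $q^{m-4}L_4(q)$. At $q=2$ this is $6\cdot2^{m-4}=3\cdot2^{m-3}$.

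The only nonroutine step is the lower bound $L_5(2)\ge11$. I expect to rely on the exhaustive enumeration already performed, so that the rank-zero five-letter minimum is read off exactly as the six-letter rank-zero minimum was.
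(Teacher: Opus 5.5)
\textbf{Parts (2) and (3) are fine; part (1) has a gap.} Parts (2) and (3) follow the paper's argument. One small correction: (2) needs only one application of Lemma~\ref{M-lem:append-qmax}, not two. The single append of $q\cdot q$ already gives $q\cdot qL_4(q)$ and raises the rank from one to two.

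The gap in (1) is at exactly the step you flag. Proposition~\ref{M-prop:six-classification} certifies only the minimum diameters of the $342$ feasible \emph{six-letter} profiles. Its enumeration runs over alphabets $\{0\}\cup S$ with $S$ a five-subset of $\{1,\ldots,24\}$, and the relation-free five-letter profile is not among the profiles it records. So $L_5(2)$ cannot be ``read off exactly as the six-letter rank-zero minimum was''. You need some construction that turns a short relation-free five-set into a short six-letter model of a recorded profile, and you have not named one. The obvious one, appending $2d+1$ by Lemma~\ref{M-lem:append-qc-plus-one} and comparing with the rank-zero minimum $17$, only yields $d\ge8$. As written, the lower bound $L_5(2)\ge11$ is unsupported.

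\textbf{How the paper closes it.} The paper supplies the bridge by running your argument in reverse. The cluster family with $k=5$, $q=2$ has the single-hyperplane profile $t_0+t_5=2t_1$. The classification gives its exact minimum $22$, and Theorem~\ref{M-thm:cluster-k} says this minimum equals $2L_5(2)$. Hence $L_5(2)=11$, and both halves of (1) come from the same citation.

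\textbf{How to keep your direction.} The ``short hand case analysis'' you mention does exist. At $q=2$, relation-freeness means the ten positive differences are distinct, since two equal differences give either $t_i+t_l=t_j+t_k$ or a three-term progression. So the diameter is at least $10$, and diameter $10$ forces the differences to be exactly $1,\ldots,10$.
\begin{itemize}
\item The difference $9$ comes from $(0,9)$ or $(1,10)$; after reflection, $1$ is a letter.
\item The difference $8$ cannot come from $(1,9)$ or $(2,10)$, since each repeats the difference $1$. So $8$ is a letter.
\item The fifth letter $x$ must supply the differences $3,4,5,6$. Here $x=9$ repeats $1$, and for $2\le x\le7$ the set $\{x,x-1,8-x,10-x\}$ is never $\{3,4,5,6\}$.
\end{itemize}

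The paper's route costs nothing beyond the computation already needed for Theorem~\ref{M-thm:six-two}. Yours makes $L_5(2)$ independent of the six-letter data.
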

\begin{proof}
(1) The alphabet $\{0,1,4,9,11\}$ has fifteen distinct unordered two-term
sums, so $L_5(2)\le11$. The family of Theorem~\ref{M-thm:cluster-k} with
$k=5$ and $q=2$ has the profile of the single hyperplane
$t_0-2t_1+t_5=0$, and Proposition~\ref{M-prop:six-classification} gives its
exact minimum diameter $22$; by \eqref{M-eq:cluster-k}, $2L_5(2)=22$. The
model displayed is the minimum model recorded there, and it is the
construction of Theorem~\ref{M-thm:cluster-k} applied to $\{0,1,4,9,11\}$.
(2) By Lemma~\ref{M-lem:append-qmax} and Corollary~\ref{M-thm:cluster-model} the
rank is two and the minimum diameter is $q\cdot qL_4(q)$; at $q=2$,
$L_4(2)=6$ and the construction applied to $\{0,1,4,6\}$ gives
$\{0,6,7,10,12\}\cup\{24\}$. At $q=3$, $L_4(3)=11$.
(3) Iterate Lemma~\ref{M-lem:append-qmax}; each step raises the rank by one
and multiplies the minimum diameter by $q$, and $L_4(2)=6$.
\end{proof}

Item (2) is an exact family statement and not an assertion about the
rank-two universal optimum for all $q$; item (3) matches at $q=2$ the scale
of the lower family of Theorem~\ref{M-thm:corank-model} in a different
relation-kernel dimension, and no universal equality for $m\ge7$ is
inferred.

\section{Further rank-one examples}\label{M-sec:rank-sensitive}

Proposition~\ref{M-prop:rank-one-mass} controls every rank-one five-letter
profile by its primitive mass $\mu$. We compare that bound with the
exact cluster family, then give a further construction outside the mass
range covered by the comparison. These examples do not enter the proof
of the universal radius.

\begin{remark}[the range covered by $qL_4(q)$]\label{M-rem:mass-range}
By Shin's formula $L_4(q)=\binom{q+2}{2}+[q\text{ odd}]$
\cite[Theorem~10.3]{M-Shin26} (Remark~\ref{M-rem:four-ranks}),
Proposition~\ref{M-prop:rank-one-mass} shows that every five-letter rank-one
profile of primitive mass $\mu\le\lfloor q/2\rfloor+1$ has a model of
diameter at most $qL_4(q)$, the exact minimum of the cluster family of
Corollary~\ref{M-thm:cluster-model}. For even $q$ this includes the boundary
mass $\mu=q/2+1$, where $\mu q(q+1)=qL_4(q)$ exactly; for odd $q$ the range
is $\mu\le(q+1)/2$. The proposition does not settle the profiles of larger
mass, does not prove that $qL_4(q)$ is the maximum of $M_q$ over rank-one
five-letter profiles, and does not change the universal value
$H_5(q)=q^2(q+1)$; see Question~\ref{M-q:rank-one-mass}.
\end{remark}

\subsection{A proper-power family of mass $q-1$}

\begin{proposition}\label{M-prop:proper-power}
Let $q=2r\ge4$ with $q\equiv0$ or $4\pmod6$, that is, $r\not\equiv1\pmod3$.
The alphabet
\begin{equation}\label{M-eq:proper-power-alphabet}
 \{0,\ r-1,\ r(q+1),\ r(q+4),\ (q-1)r(q+4)\}
\end{equation}
is a primitive increasing integer $q$-model of $\{0,\theta,\eta,1,q-1\}$,
where $0<\theta<\eta<1$ and $1,\theta,\eta$ are rationally independent: up
to sign, its only $q$-relation is
\[
 \delta=(q-2,\,0,\,0,\,-(q-1),\,1),
\]
of mass $\mu=q-1$. Its diameter is $(q-1)r(q+4)=qL_4(q)-3q$.
\end{proposition}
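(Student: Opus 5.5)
The plan is to verify the four assertions directly: the prescribed relation, the absence of any other $q$-relation, primitivity, and the diameter identity. Write the letters as $s_0=0$, $s_1=r-1$, $s_2=r(q+1)$, $s_3=r(q+4)$, $s_4=(q-1)s_3$.

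\emph{The relation, and the real side.} Clearly $\delta\cdot S=-(q-1)s_3+s_4=0$, and $\mass(\delta)=q-1$. On the real alphabet $\{0,\theta,\eta,1,q-1\}$, rational independence of $1,\theta,\eta$ forces any rational relation to vanish on the $\theta$ and $\eta$ coordinates. It then lives on $\{0,1,q-1\}$, whose relation lattice is spanned by $\delta$. Since $2\mass(\delta)=2q-2>q$ for $q\ge4$, the real $q$-profile is exactly $\{0,\pm\delta\}$. So it suffices to show that the integer alphabet has no other nonzero $q$-relation.

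\emph{Relations using $s_4$.} Orient such a relation so that the coefficient of $s_4$ is positive. That coefficient is then $1$: two copies give $2s_4=2(q-1)s_3>qs_3$, which exceeds any $q$-term negative sum. The negative side has at most $q$ letters, each at most $s_3$, and must reach at least $s_4=(q-1)s_3$. Hence it consists of $q-1$ copies of $s_3$ plus possibly one further letter $y$. The positive side is $s_4$ plus at most $q-1$ further letters. After cancelling the common part $s_4=(q-1)s_3$, this leaves an equality between $y$ and a sum of at most $q-1$ letters of the core $\{0,s_1,s_2,s_3\}$. That equality is a core relation of mass at most $q$. Once the core is shown to be relation-free, the equality is trivial and the original relation is exactly $\delta$.

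\emph{The core is relation-free.} This is the main step. Write a core relation as $xs_1+ys_2+zs_3=0$ with mass at most $q=2r$. Reducing modulo $r$ gives $-x\equiv0$, so $x=kr$ with $k\in\{0,\pm1,\pm2\}$. Dividing by $r$ yields
\[
 k(r-1)+y(2r+1)+z(2r+4)=0 .
\]
In the case $k=0$, note that $\gcd(2r+1,2r+4)=\gcd(2r+1,3)$, which is $1$ exactly when $r\not\equiv1\pmod3$; this is where the hypothesis enters. Then $2r+4\mid y$, so $|y|>q$ unless $y=z=0$. For $k=\pm1,\pm2$ I would orient $k>0$ and run a short mass argument. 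Since $|k(r-1)|<2r+1$, the coefficients $y$ and $z$ must have opposite signs, and $y(2r+1)+z(2r+4)=(y+z)(2r+1)+3z$ must equal $-k(r-1)$. Bounding $|y|+|z|$ together with the balancing coefficient by $q$ leaves only a few pairs $(y,z)$ to exclude by hand. I expect this finite case check, possibly needing the congruence condition again when $3z\equiv -k(r-1)\pmod{2r+1}$, to be the main obstacle.

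\emph{Primitivity and diameter.} For primitivity, $\gcd(r-1,r(2r+1))=\gcd(r-1,3)$, and this equals $1$ when $r\not\equiv1\pmod3$; so already $\gcd(s_1,s_2)=1$. For the diameter, Remark~\ref{M-rem:four-ranks} gives $L_4(q)=(q+1)(q+2)/2$ for even $q$. Hence
\[
 qL_4(q)-3q=r\bigl((q+1)(q+2)-6\bigr)=r(q-1)(q+4),
\]
which is the last letter $s_4$.
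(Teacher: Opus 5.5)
\textbf{There is a genuine gap in your treatment of relations that use $s_4$.} From ``the negative side has at most $q$ letters, each at most $s_3$, and must reach $(q-1)s_3$'' you conclude that it consists of $q-1$ copies of $s_3$ plus at most one further letter. This does not follow. The deficit $s_3-s_2=3r$ is small compared with $s_3=r(q+4)$, so up to $\lfloor (q+4)/3\rfloor$ of the negative letters may equal $s_2$. Already for $q=4$ (letters $0,1,10,16,48$) the negative side $16+16+10+10=52\ge 48$ contains only two copies of $s_3$.

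These configurations are not vacuous in general. When $q\equiv 2\pmod 6$ one has $(q-1)(q+4)=\tfrac{q+4}{3}(q+1)+\tfrac{2q-4}{3}(q+4)$. So $s_4=\tfrac{q+4}{3}s_2+\tfrac{2q-4}{3}s_3$ is a genuine mass-$q$ relation whose negative side has only $(2q-4)/3<q-1$ copies of $s_3$. Your inference is a pure size argument that never uses the residue hypothesis, so it cannot exclude such relations: they must be excluded arithmetically. Substituting $s_4=(q-1)s_3$ does not rescue it directly either. It yields a core relation of positive mass up to $2q-2$, to which ``the core is relation-free at mass $\le q$'' does not apply.

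The paper avoids this by reversing your order of reductions.
\begin{itemize}
\item It reduces modulo $r$ on the whole five-letter alphabet, keeping the $s_4$-coefficient $t$ in play. The coefficient of $r-1$ is then $0$, $\pm r$ or $\pm 2r$.
\item In the zero case it divides by $r$ and works on the anchors $\{0,B,C,D\}$ with $B=q+1$, $C=q+4$, $D=(q-1)C$. Orienting the $D$-coefficient to $+1$ and padding gives the deficit identity $C=\sum_{x\in N}(C-x)+\sum_{y\in P}y$. Here the only nonzero deficits are $3$ and $C$, and $2B>C$ leaves at most one nonzero term of $P$. The single dangerous configuration, deficits of $3$ alone summing to $C$, is exactly what $3\nmid q+4$ rules out; that is where the hypothesis really enters.
\item The case $\pm 2r$ fails by size.
\item The case $\pm r$ reduces to $3y-Cv=r-1$ with $v=y+z+(q-1)t\in\{-3,-2,-1,0\}$. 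The three values $v=0,-1,-3$ need $r\equiv 1\pmod 3$, and $v=-2$ violates the mass budget.
\end{itemize}

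The rest of your proposal holds up. The core computation you left as a case check does close. With coefficient $r$ at $s_1$ and positive budget $\max(y,0)+\max(z,0)\le r$, the equation $(y+z)(2r+1)+3z=-(r-1)$ forces either $y=-z=(r-1)/3$ or $z=(r+2)/3$, $y=-(z+1)$. Both require $r\equiv1\pmod 3$, and coefficient $2r$ fails by size. Your real-side profile, primitivity and diameter computations are also correct. But without a correct treatment of the relations that involve $s_4$, the proof is incomplete.
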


On $\{0,\theta,\eta,1,q-1\}$ rational independence forces the coefficients
of $\theta$ and $\eta$ in a vanishing relation to be zero, leaving the
integer multiples of $\delta$; since $2(q-1)>q$, only $\pm\delta$ have mass
at most $q$. The claim is therefore that \eqref{M-eq:proper-power-alphabet}
satisfies $\delta$ and no other $q$-relation.

\begin{proof}
Set $a=r-1$, $B=q+1$, $C=q+4$ and $D=(q-1)C$, so that
\eqref{M-eq:proper-power-alphabet} is $\{0,a,rB,rC,rD\}$.

\smallskip\noindent\emph{The four scaled anchors.}
Consider first $S=\{0,B,C,D\}$. Since $q\equiv0$ or $1\pmod3$,
$\gcd(B,C)=1$. The primitive balanced relation on $\{0,B,C\}$ is
$(B-C,C,-B)=(-3,q+4,-(q+1))$, of mass $C=q+4>q$, so the three-point core
has no $q$-relation.

A $q$-relation involving $D$ can be oriented so that its coefficient at
$D$ is positive. Since $2D=2(q-1)C>qC$ and the negative side uses at most
$q$ letters, each at most $C$, that coefficient is one. Its mass $p$ is at
least $q-1$, because the negative side must reach $D=(q-1)C$. If $p=q-1$,
equality forces every negative letter to be $C$ and every other positive
letter to be zero, which after cancellation and zero padding is exactly
$D=(q-1)C$.

If $p=q$, let $P$ be the multiset of the $q-1$ other positive old terms,
zeros included, and $N$ the multiset of the $q$ negative old terms. The
equality $D+\sum P=\sum N$ can be rewritten, using
$\sum N=qC-\sum_{x\in N}(C-x)$, as
\begin{equation}\label{M-eq:deficit-identity}
 C=\sum_{x\in N}(C-x)+\sum_{y\in P}y .
\end{equation}
The positive old values are $0$, $B$ or $C$, and $2B>C$, so $P$ has at
most one nonzero value. The nonzero deficits $C-x$ are $3$ (from $x=B$)
and $C$ (from $x=0$). If $P$ contains $C$, every deficit is zero, $N$
consists of $q$ copies of $C$, and canceling one $C$ leaves $D=(q-1)C$.
If $P$ contains $B$, the deficits total $3$, so $N$ contains exactly one
$B$ and $q-1$ copies of $C$, and canceling $B$ leaves $D=(q-1)C$. If $P$
has no nonzero term, then either a single deficit $C$ accounts for the sum,
so that $N$ is one zero and $q-1$ copies of $C$ and canceling the zero
leaves $D=(q-1)C$, or the sum consists of deficits $3$ alone and is a
multiple of $3$, which would require $3\mid C=q+4$, excluded by the
residue hypothesis. Hence every $q$-relation on $S$ is a multiple of the
row $(q-2,0,-(q-1),1)$ on $(0,B,C,D)$, and only its two primitive signs
have mass at most $q$.

\smallskip\noindent\emph{Excluding the inserted free coefficient.}
Since $\gcd(a,r)=1$, reduction modulo $r$ shows that the coefficient of
$a$ in a $q$-relation on $\{0,a,rB,rC,rD\}$ is a multiple of $r$, hence
one of $0,\pm r,\pm2r$. If it is zero, dividing the numerical equation by
$r$ gives a relation on $S$, and the anchor result leaves only the
required line. If its absolute value is $2r=q$, orient it positively; it
consumes the entire positive mass, and dividing the numerical equality by
$r$ would express $2a=q-2$ as a sum of $q$ members of $\{0,B,C,D\}$, which
is impossible because every nonzero member is at least $B=q+1$.

It remains to exclude the coefficient $r$, after reversing the row if
necessary. Let $y,z,t$ be the coefficients at $rB,rC,rD$. The positive
mass already includes $r$, so $y\le r$, while $y\ge-q=-2r$. Dividing the
numerical equation by $r$ gives $a+By+Cz+Dt=0$. Since $B=C-3$ and
$D=(q-1)C$, put $v=y+z+(q-1)t$ to obtain
\begin{equation}\label{M-eq:proper-power-residue}
 3y-Cv=a=r-1 .
\end{equation}
The bounds $-2r\le y\le r$ and $C=2r+4$ give $-4<v<1$, so
$v\in\{-3,-2,-1,0\}$. For $v=0,-1,-3$ the value
$y=(r-1+(2r+4)v)/3$ is an integer only when $r\equiv1\pmod3$, which is
excluded. Thus $v=-2$, and \eqref{M-eq:proper-power-residue} gives
\begin{equation}\label{M-eq:proper-power-v}
 y=-r-3,\qquad z+(2r-1)t=r+1 .
\end{equation}
If $t\le0$, then $z\ge r+1$, exceeding the remaining positive-mass budget
$r$. If $t\ge1$, then $z\le2-r\le0$, and the negative coefficients $y,z$
alone have total mass
\[
 (r+3)+\bigl((2r-1)t-r-1\bigr)=(2r-1)t+2\ge2r+1>q ,
\]
which is also impossible. Hence the free coefficient is zero, and
\eqref{M-eq:proper-power-alphabet} has exactly the profile of $\delta$.

\smallskip\noindent\emph{Order, primitivity and diameter.}
For $r\ge2$ the five letters $0<r-1<rB<rC<rD$ are strictly increasing.
Their gcd is one because $\gcd(B,C)=1$ and $\gcd(a,r)=1$. Finally
\[
 (q-1)r(q+4)=\frac{q(q-1)(q+4)}2=\frac{q(q+1)(q+2)}2-3q=qL_4(q)-3q,
\]
by the even-$q$ formula $L_4(q)=(q+1)(q+2)/2$ of
\cite[Theorem~10.3]{M-Shin26}.
\end{proof}

\begin{remark}[scope]\label{M-rem:proper-power-scope}
When $q\equiv2\pmod6$ both $B=q+1$ and $C=q+4$ are divisible by $3$, and
the two-anchor relation $(B-C,C,-B)/3$ on $\{0,B,C\}$ has mass
$(q+4)/3\le q$ independently of the inserted point; so the construction
genuinely fails in that residue class and cannot be extended by dropping
the congruence hypothesis. This is not a counterexample to $qL_4(q)$; that
residue needs a different construction. At $q=4$ and $q=6$ the alphabet
\eqref{M-eq:proper-power-alphabet} is $\{0,1,10,16,48\}$ and
$\{0,2,21,30,150\}$. For $q\ge6$ the mass $\mu=q-1$ exceeds
$\lfloor q/2\rfloor+1$, so this family lies outside the range of
Remark~\ref{M-rem:mass-range} while still having a model below $qL_4(q)$.
Exact minimality of \eqref{M-eq:proper-power-alphabet} is not asserted.
\end{remark}

\section{Finite additive-square-free spectra}\label{M-sec:application}

A word over a real alphabet $T$ is a finite sequence of its elements.
An \emph{additive square} consists of two adjacent nonempty blocks of
equal length and equal sum. A word containing no such factor is
\emph{additive-square-free}. Write
\[
 g(T)=\sup\{|W|:W\text{ is an additive-square-free word over }T\},
\]
with $g(T)=\infty$ if these lengths are unbounded.
Freedman and Brown \cite{M-FB16} began the study of the finite values of $g$
on four-element real alphabets, and the companion paper \cite{M-Spec26}
classifies the four-element real alphabets with $g(T)<139$ together with
the real line $\{0,1,x,x+2\}$ below $167$. The \emph{finite spectrum} on
$m$ real (respectively integer) letters is the set of finite values of
$g(T)$ over $m$-element real (integer) alphabets. A \emph{labelled word} is
a word over the labels $\{0,\ldots,m-1\}$; it is read on an ordered
$m$-alphabet through $i\mapsto t_i$.

\begin{lemma}\label{M-lem:good-transfer}
If two ordered $m$-alphabets $T$ and $T'$ are $q$-equivalent, then a
labelled word of length at most $2q+1$ contains an additive square on $T$ if and only if it contains one on $T'$.
\end{lemma}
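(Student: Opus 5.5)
The plan is to reduce the statement to a single balanced-relation test. Everything then follows from the definition of $q$-equivalence (Definition~\ref{M-def:model}).

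First, an additive square in a labelled word $W=w_1\cdots w_n$ is a factor $w_{a+1}\cdots w_{a+2\ell}$ with $\ell\ge1$ such that
\[
 \sum_{\nu=1}^{\ell}t_{w_{a+\nu}}=\sum_{\nu=1}^{\ell}t_{w_{a+\ell+\nu}} .
\]
Since $2\ell\le n\le 2q+1$, we have $\ell\le q$. We encode the factor by a vector $\delta\in\Z^m$: its $i$th coordinate is the number of occurrences of label $i$ in the first block minus the number in the second block. This vector is balanced, because the two blocks have the same length $\ell$. Its mass is at most $\ell\le q$, because the positive part is bounded by the multiplicities in the first block; cancellation only lowers the mass.

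Second, the additive-square condition on an ordered alphabet $T$ is exactly $\delta\cdot T=0$. Note that $\delta$ depends only on the labelled factor and not on the alphabet. The identity $\delta=0$ (the two blocks are anagrams) is included, and it vanishes on every alphabet. By $q$-equivalence, $\delta\cdot T=0$ holds if and only if $\delta\cdot T'=0$ holds, for every balanced $\delta$ of mass at most $q$. Hence each fixed factor is an additive square on $T$ if and only if it is one on $T'$.

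Finally, we quantify over the finitely many factors of $W$, which are the same labelled factors for both alphabets. This gives the equivalence. There is no real obstacle. The only point to check carefully is the length bound: a square of half-length $\ell$ occupies $2\ell$ letters, so a word of length $2q+1$ admits only $\ell\le q$, and the mass of $\delta$ stays within the range where $q$-equivalence applies.
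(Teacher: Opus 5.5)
Your proposal is correct and follows essentially the same route as the paper: bound the half-length by $\ell\le q$, encode the factor as the balanced multiplicity difference $\pi(U)-\pi(V)$ of mass at most $\ell$, and apply the definition of $q$-equivalence. Your extra remarks on the positive part of this vector and on the case $\delta=0$ are harmless clarifications of that same argument.
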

\begin{proof}
A factor $UV$ with $|U|=|V|=l$ of a word of length at most $2q+1$ has
$l\le q$. The equality $\sum U=\sum V$ is the vanishing on the alphabet of
the balanced relation $\pi(U)-\pi(V)$, where $\pi$ records the multiplicity
of each label, and this relation has mass at most $l\le q$. By
Definition~\ref{M-def:model} it vanishes on $T$ if and only if it vanishes on
$T'$.
\end{proof}

\Needspace{14\baselineskip}
\begin{proposition}\label{M-prop:spectrum-transfer}
\leavevmode
\begin{enumerate}
\item The finite spectrum on four real letters equals the finite spectrum
 on four integer letters. For each integer cutoff $L\ge4$, every finite
 value $g(T)<L$ of a four-element real alphabet is realized by a primitive
 integer alphabet of diameter at most
 $\lfloor L/2\rfloor(\lfloor L/2\rfloor+1)$.
\item For every $m\ge2$ the finite spectrum on $m$ real letters equals the
 finite spectrum on $m$ integer letters, and for each integer $L\ge4$ every
 finite value $g(T)<L$ of an $m$-element real alphabet is realized by a
 primitive integer alphabet of diameter at most $H_m(\lfloor L/2\rfloor)$.
 For $m=5$ this bound is $\lfloor L/2\rfloor^2(\lfloor L/2\rfloor+1)$, and
 for $m\ge5$ it is at most $\sum_{j=2}^{m-2}\lfloor L/2\rfloor^j$.
\end{enumerate}
\end{proposition}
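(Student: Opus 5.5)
The plan is to reduce a finite value of $g$ to a finite check on words of bounded length, and then transfer that check along a $q$-model with $q=\lfloor L/2\rfloor$, so that Lemma~\ref{M-lem:good-transfer} applies to words of length up to $2q+1\ge L-1$.

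\emph{Finite values are witnessed by short words.} Suppose $g(T)=n<L$ is finite. Then some additive-square-free word of length $n$ exists over $T$. Every word of length $n+1$ contains an additive square. Additive-square-freeness is closed under taking factors, so it suffices to look at words of length exactly $n+1$. We have $n+1\le L$. Put $q=\lfloor L/2\rfloor$, so that $2q+1\ge L\ge n+1$; here $q\ge2$ because $L\ge4$.

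\emph{Transfer to an integer model.} Let $T'$ be a primitive order-preserving $q$-model of $T$ of diameter at most $M_q(T)\le H_m(q)$, which exists by Theorem~\ref{M-thm:flag-model} and Lemma~\ref{M-lem:elementary}. Lemma~\ref{M-lem:good-transfer} then says that a labelled word of length at most $2q+1$ contains an additive square on $T$ if and only if it does on $T'$. Apply this in both directions:
\begin{itemize}
\item The word of length $n$ that is square-free on $T$ remains square-free on $T'$, so $g(T')\ge n$.
\item Every labelled word of length $n+1$ contains a square on $T$, hence contains one on $T'$, so $g(T')\le n$.
\end{itemize}
Thus $g(T')=n$. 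This shows that every finite value of the real spectrum lies in the integer spectrum. The reverse inclusion is trivial, since integer alphabets are real alphabets. The argument uses $m\ge2$ only to ensure that models exist.

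\emph{Diameter bounds.} These come from the universal radii. For $m=4$, Theorem~\ref{M-thmA} gives $H_4(q)=q(q+1)$, which is part (1). For $m=5$, Theorem~\ref{M-thmB} gives $q^2(q+1)$. For $m\ge5$, Theorem~\ref{M-thm:general-bounds} gives $H_m(q)\le\sum_{j=2}^{m-2}q^j$.

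The only delicate point is the length bookkeeping. Every relevant factor must have half-length at most $q$, so one must confirm that $n+1\le 2\lfloor L/2\rfloor+1$ for every integer $L$; this holds for both parities of $L$. One should also note that finiteness of $g(T')$ follows from the second bullet: no square-free word over $T'$ has length $n+1$, so none has length exceeding $n$ either.
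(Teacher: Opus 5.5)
Your proposal is correct and follows the paper's proof. You take $q=\lfloor L/2\rfloor\ge2$, use Lemma~\ref{M-lem:good-transfer} to carry over both the square-free word of length $n$ and the unavoidability at length $n+1$ (with the prefix argument), and read off the diameter bounds from Theorems~\ref{M-thmA}, \ref{M-thmB} and~\ref{M-thm:general-bounds}. The only cosmetic difference is that the paper proves the spectral equality separately with $q=\max(2,\lfloor(N+1)/2\rfloor)$, whereas you get it implicitly by applying the cutoff version with any $L\ge\max(4,n+1)$; that step deserves one explicit sentence.
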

\begin{proof}
Integer alphabets are real alphabets, so one inclusion is trivial. Let $T$
be an $m$-element real alphabet with $g(T)=N<\infty$ and choose
$q=\max(2,\lfloor(N+1)/2\rfloor)$, so that $2q+1\ge N+1$. Let $T'$ be a
primitive order-preserving $q$-model of $T$, which exists by
Theorem~\ref{M-thm:flag-model}. By Lemma~\ref{M-lem:good-transfer}, $T'$ has a
word of length $N$ avoiding additive squares and no such word of length
$N+1$; any longer avoiding word would have an avoiding prefix of length $N+1$. Hence $g(T')=N$,
which proves the equality of the finite spectra for every $m$.

If $N<L$, take instead $q=\lfloor L/2\rfloor\ge2$; then $N+1\le L\le2q+1$
and the same argument gives $g(T')=N$ for a primitive $q$-model $T'$ of
diameter at most $H_m(q)$. For $m=4$ this is $q(q+1)$ by
Theorem~\ref{M-thm:sharp-four-model}, for $m=5$ it is $q^2(q+1)$ by
Theorem~\ref{M-thm:sharp-five-model}, and for $m\ge5$ it is at most
$\sum_{j=2}^{m-2}q^j$ by Theorem~\ref{M-thm:general-bounds}.
\end{proof}

\begin{remark}[what is classical]\label{M-rem:transfer-classical}
The qualitative statement, that the finite spectra on real and on integer
letters coincide, is the classical finite Freiman-model principle
\cite[Part~II]{M-GR09}, and it is contained in the
real-to-integer realizability statements of Nathanson \cite{M-Nat18} and
O'Bryant \cite{M-OBr25}. It also follows from rational approximation alone:
the finitely many required zero relations define a rational subspace,
strict letter order and avoidance of the other short-relation hyperplanes
are open conditions there, rational points are dense in that subspace, and
clearing denominators produces an integer model. Only the explicit ordered
diameter bounds of Proposition~\ref{M-prop:spectrum-transfer} are new, and
for four letters the bound rests on the sharp value $H_4(q)=q(q+1)$. The
model may change as $q$ increases; no infinite additive-square-free word on one fixed
integer alphabet follows.
\end{remark}

\begin{remark}[a fixed relation line]\label{M-rem:relation-line}
There is also an anisotropic consequence on a fixed relation line. For four
letters, one primitive relation of mass $\mu$ together with an independent
relation of mass at most $q$ forces the alphabet to be affinely equivalent
to a primitive integer alphabet of diameter at most $\mu q$, by
Lemma~\ref{M-lem:mass-minor}: the common kernel is a rational line whose
primitive generator has height at most $\mu q$. Fix the line of ordered
four-letter alphabets on which one primitive relation of mass $\mu$
vanishes, and a labelled word $W$ of length $L$. Each block comparison of
$W$ restricts to an affine function of the line parameter. If $W$ avoids additive squares
at one point of the line, no comparison vanishes identically on the line;
a point where $W$ fails therefore carries a relation of mass at most
$\lfloor L/2\rfloor$ independent of the fixed one, so it is a rational
direction of primitive diameter at most $\mu\lfloor L/2\rfloor$, and there
are only finitely many such directions. On the line $\{0,1,x,x+2\}$,
$x>1$, the fixed relation $(2,-2,-1,1)$ has $\mu=3$, and a word of length
$L=167$ gives the bound $249$. The companion paper \cite{M-Spec26} exhibits
four labelled words of length $167$ on this line and lists their exact
rational failure sets.
\end{remark}

\section{Open problems}\label{M-sec:open}

We list what the present results leave open, with the state of knowledge
for each item; the attributions follow \S\ref{M-sec:intro-attribution}.

\begin{question}[the universal radius]\label{M-q:six}\label{M-q:stratum}
Is
\[
 H_m(q)=q^{m-3}(q+1)
\]
for every $m\ge6$ and $q\ge2$?
\end{question}
The value on the right is attained in relation rank $m-3$ by
Theorem~\ref{M-thm:corank-model}. It is the universal maximum for
$m=3,4,5$ and for $(m,q)=(6,2)$. The general upper bounds leave
\[
 q^4+q^3\le H_6(q)\le q^4+q^3+q^2,
 \qquad 48\le H_7(2)\le56.
\]
These bounds and the examples do not determine the other values.

The question concerns the maximum over all relation ranks. For $m=4$,
the per-rank maxima are $L_4(q)$, $q(q+1)$ and $q^2$
(Remark~\ref{M-rem:four-ranks}). For $m=5$, rank two attains $q^2(q+1)$
and the same number bounds every rank. At $(m,q)=(6,2)$, the maxima in
ranks $0,1,2$ are $17,22,24$, respectively
(Proposition~\ref{M-prop:six-classification}); rank three also attains
$24$, while rank four is bounded by $16$. Thus the universal maximum
need not be confined to rank $m-3$.

\begin{question}[relation-free five-sets]\label{M-q:L5}
Determine $L_5(q)$, the least diameter of a five-element integer alphabet
with no nonzero $q$-relation. Rank zero is a single profile, so $L_5(q)$
is also the maximum of $M_q$ over relation-free five-letter alphabets.
Known: $L_5(q)\le q^2(q+1)$ (Corollary~\ref{M-cor:five-rank-zero}), and
Shin's five-point superincreasing set \cite[Corollary~7.4]{M-Shin26} has
height $q^3+q^2+q+1$. The four-letter value $L_4(q)$ is Shin's
Theorem~10.3. The present paper does not determine $L_5(q)$ for arbitrary $q$; exhaustive search
 gives $L_5(2)=11$ ($\{0,1,4,9,11\}$,
Corollary~\ref{M-cor:six-explanations}), $L_5(3)=23$ ($\{0,1,15,18,23\}$) and
$L_5(4)=41$ ($\{0,1,24,37,41\}$), whereas the construction of
Corollary~\ref{M-cor:five-rank-zero} gives $11$, $34$ and $61$, so that
construction is not optimal in the two cases $q=3,4$.
\end{question}

\begin{question}[rank-one five-letter profiles by mass]\label{M-q:rank-one-mass}
For $2\le\mu\le q$, determine the maximum of $M_q(T)$ over five-letter
rank-one profiles of primitive mass $\mu$, and in particular whether the
maximum over all rank-one five-letter profiles is $qL_4(q)$, the value of
the cluster family of Corollary~\ref{M-thm:cluster-model}.
Proposition~\ref{M-prop:rank-one-mass} gives the upper bound $\mu q(q+1)$,
which is at most $qL_4(q)$ exactly when $\mu\le\lfloor q/2\rfloor+1$
(Remark~\ref{M-rem:mass-range}); the cluster family has mass $q$ and
diameter $qL_4(q)$; and the proper-power family of
Proposition~\ref{M-prop:proper-power} shows that profiles of mass
$q-1>\lfloor q/2\rfloor+1$, for $q\ge6$ with $q\equiv0,4\pmod6$, also
admit models below $qL_4(q)$, namely of diameter $qL_4(q)-3q$. The
remaining high-mass profiles are covered by neither statement.
\end{question}

\begin{remark}[label-level realization lengths]\label{M-rem:label-length}
Following Nathanson \cite{M-Nat26}, let $N(h,k)$ be the least $N$ such that
every value of $|hA|$ over $k$-element sets $A\subset\Z$ is attained by a
$k$-element set contained in $[0,N-1]\cap\Z$.
Nathanson \cite[Theorem~8]{M-Nat26} gives $N(h,k)<4(8h)^{k-1}$ for
$h,k\ge3$. His proof, using Lemma~1, also yields an integer Freiman
$h$-isomorphic copy within that diameter bound: minimize the centered
radius in the isomorphism class and apply the same compression step.
That correspondence need not preserve the order of the elements.
 Since $|hA|$ depends only on
the $h$-zero profile of $A$ under the increasing correspondence,
Corollary~\ref{M-cor:label} gives
\[
 N(h,k)\le1+H_k(h)\le1+\sum_{j=0}^{k-2}h^j\qquad(h\ge2,\ k\ge3),
\]
and $N(h,k)\le1+\sum_{j=2}^{k-2}h^j$ for $k\ge5$. Shin
\cite[Theorem~1.5(ii), Corollary~7.4]{M-Shin26} proves
$N(h,k)\le1+\max\{(k-2)h^{k-2},\sum_{j=0}^{k-2}h^j\}$; for $k\ge4$ and
$h\ge2$ the maximum is $(k-2)h^{k-2}$, so the bound above is smaller. At
$k=4$ his Corollary~10.5, $\nu(h,4)\le\max\{h^2,D_h^{\max}\}$ with
$\nu(h,4)=N(h,4)-1$ and $D_h^{\max}=L_4(h)$, is at least as good as
$H_4(h)=h(h+1)$, and better for $h\ge3$.
\end{remark}

\begin{question}[Shin's label-level question]\label{M-q:shin-label}
The open problem in \cite{M-Shin26} nearest to this paper is the one raised
after his Corollary~10.5: whether $\nu(h,4)=D_h^{\max}$ for every $h\ge2$,
that is, whether every value of $|hA|$ over four-element integer sets is
attained inside $[0,D_h^{\max}]$. Theorem~\ref{M-thm:sharp-four-model}
concerns the weak type, a finer invariant than the label $|hA|$, and does
not answer it.
\end{question}

\subsection*{Data and code availability}
The accompanying code and data archive contains the six-letter classification,
its exact certificates, and programs for the finite construction and radius
checks. The technical supplement specifies their mathematical scope; execution
instructions accompany the archive.

\subsection*{Use of generative AI}
Generative AI tools, including OpenAI Codex, assisted with mathematical
analysis, research programming, literature review and manuscript preparation.
The author has reviewed the manuscript and takes responsibility for its content.

\paragraph{Electronic evidence.}
The computational data and verification programs accompany the preprint at
\url{https://doi.org/10.5281/zenodo.22563136}.

\clearpage
\hypertarget{supplement-start}{}
\setcounter{section}{0}\setcounter{subsection}{0}
\setcounter{equation}{0}
\setcounter{table}{0}\setcounter{figure}{0}
\renewcommand{\MainPrefix}{Main }\renewcommand{\SuppPrefix}{}
\section*{Technical supplement}

This supplement gives the finite classification used to prove $H_6(2)=24$.
It lists the possible two-term additive equalities, identifies which
systems admit six strictly increasing real elements, and records exact
integer models and their minimum diameters. It also lists the small
examples used in the general theorems and an alternative two-dimensional lattice proof. The main article supplies the
definitions and theoretical reductions; the accompanying data and programs
allow the finite assertions to be checked.

\noindent This supplement accompanies the main article. References to its
theorems link to the preceding article; the sections and tables here are local.
The notation is that of the main article: $q$ is the
observation order, an ordered alphabet is $T=(t_0<\cdots<t_{m-1})$, a
balanced relation $\delta\in\Z^m$ has $\sum_i\delta_i=0$ and mass
$\mass(\delta)=\sum_i\max(\delta_i,0)$, the $q$-zero profile is the set of
balanced relations of mass at most $q$ vanishing on $T$, $H_m(q)$ is the
least universal diameter of an order-preserving $q$-model of an ordered real
$m$-alphabet, and $L_k(q)$ is the least diameter of a $k$-element integer
alphabet with no nonzero balanced relation of mass at most $q$.

\section{The six-letter classification at order two}\label{S-supp:six}

Theorem~C states $H_6(2)=24$. Its upper bound for complete short-relation
rank at most two rests on the finite classification of
Proposition~\ref{M-prop:six-classification} of the main article, whose data are recorded here; ranks
three and four use Theorem~\ref{M-thm:corank-model} and Corollary~\ref{M-cor:max-rank}, as recalled in
\S\ref{S-supp:high-ranks}.

\subsection{The thirty-five hyperplanes}\label{S-supp:hyperplanes}
For six letters and $q=2$, every nonzero balanced relation of mass at most
two that can vanish on a strictly increasing alphabet is, up to sign, one
of the $\binom63=20$ relations $t_i+t_k=2t_j$ with $i<j<k$ or one of the
$\binom64=15$ relations $t_i+t_l=t_j+t_k$ with $i<j<k<l$ (main article,
Section~\ref{M-sec:six}). Table~\ref{S-tab:hyperplanes} lists them in the order used by the
data files: rows $1$--$20$ are the triples $(i,j,k)$ in lexicographic order
and rows $21$--$35$ are the quadruples $(i,j,k,l)$ in lexicographic order.
Row $n$ corresponds to bit $n-1$ of the zero masks described in
\S\ref{S-supp:flats}. For each row the table gives the balanced vector
$\delta=(\delta_0,\ldots,\delta_5)$ with $\delta\cdot T=0$; its gap row is
$(A_1,\ldots,A_5)$ with $A_j=\sum_{i\ge j}\delta_i$, so that
$\delta\cdot T=\sum_jA_jg_j$ on the gaps $g_j=t_j-t_{j-1}$.

\begin{table}[htbp]
\caption{The $35$ order-two hyperplanes on six letters, in data-file order.}
\label{S-tab:hyperplanes}
\begin{center}\small
\begin{tabular}{@{}rll@{\qquad}rll@{}}
\toprule
no. & equality & $\delta$ & no. & equality & $\delta$\\
\midrule
1 & $t_0+t_2=2t_1$ & $(1,-2,1,0,0,0)$ & 19 & $t_2+t_5=2t_4$ & $(0,0,1,0,-2,1)$ \\
2 & $t_0+t_3=2t_1$ & $(1,-2,0,1,0,0)$ & 20 & $t_3+t_5=2t_4$ & $(0,0,0,1,-2,1)$ \\
3 & $t_0+t_4=2t_1$ & $(1,-2,0,0,1,0)$ & 21 & $t_0+t_3=t_1+t_2$ & $(1,-1,-1,1,0,0)$ \\
4 & $t_0+t_5=2t_1$ & $(1,-2,0,0,0,1)$ & 22 & $t_0+t_4=t_1+t_2$ & $(1,-1,-1,0,1,0)$ \\
5 & $t_0+t_3=2t_2$ & $(1,0,-2,1,0,0)$ & 23 & $t_0+t_5=t_1+t_2$ & $(1,-1,-1,0,0,1)$ \\
6 & $t_0+t_4=2t_2$ & $(1,0,-2,0,1,0)$ & 24 & $t_0+t_4=t_1+t_3$ & $(1,-1,0,-1,1,0)$ \\
7 & $t_0+t_5=2t_2$ & $(1,0,-2,0,0,1)$ & 25 & $t_0+t_5=t_1+t_3$ & $(1,-1,0,-1,0,1)$ \\
8 & $t_0+t_4=2t_3$ & $(1,0,0,-2,1,0)$ & 26 & $t_0+t_5=t_1+t_4$ & $(1,-1,0,0,-1,1)$ \\
9 & $t_0+t_5=2t_3$ & $(1,0,0,-2,0,1)$ & 27 & $t_0+t_4=t_2+t_3$ & $(1,0,-1,-1,1,0)$ \\
10 & $t_0+t_5=2t_4$ & $(1,0,0,0,-2,1)$ & 28 & $t_0+t_5=t_2+t_3$ & $(1,0,-1,-1,0,1)$ \\
11 & $t_1+t_3=2t_2$ & $(0,1,-2,1,0,0)$ & 29 & $t_0+t_5=t_2+t_4$ & $(1,0,-1,0,-1,1)$ \\
12 & $t_1+t_4=2t_2$ & $(0,1,-2,0,1,0)$ & 30 & $t_0+t_5=t_3+t_4$ & $(1,0,0,-1,-1,1)$ \\
13 & $t_1+t_5=2t_2$ & $(0,1,-2,0,0,1)$ & 31 & $t_1+t_4=t_2+t_3$ & $(0,1,-1,-1,1,0)$ \\
14 & $t_1+t_4=2t_3$ & $(0,1,0,-2,1,0)$ & 32 & $t_1+t_5=t_2+t_3$ & $(0,1,-1,-1,0,1)$ \\
15 & $t_1+t_5=2t_3$ & $(0,1,0,-2,0,1)$ & 33 & $t_1+t_5=t_2+t_4$ & $(0,1,-1,0,-1,1)$ \\
16 & $t_1+t_5=2t_4$ & $(0,1,0,0,-2,1)$ & 34 & $t_1+t_5=t_3+t_4$ & $(0,1,0,-1,-1,1)$ \\
17 & $t_2+t_4=2t_3$ & $(0,0,1,-2,1,0)$ & 35 & $t_2+t_5=t_3+t_4$ & $(0,0,1,-1,-1,1)$ \\
18 & $t_2+t_5=2t_3$ & $(0,0,1,-2,0,1)$ & & & \\
\bottomrule
\end{tabular}
\end{center}
\end{table}

\subsection{Candidate masks and flats}\label{S-supp:flats}
A real profile of rank $r\le2$ has a basis of $r$ rows among the $35$, and
its complete zero profile is the set of all $35$ rows lying in the rational
span of that basis. There are
\[
 1+35+\binom{35}2=631
\]
subsets of at most two rows. Their row-space closures give exactly $487$
distinct sets of rows, the \emph{flats}: one of rank zero (the empty
profile), $35$ of rank one (each single row is its own closure, since two
distinct primitive rows are not proportional), and $451$ of rank two. A
flat is stored as a \emph{zero mask}, the $35$-bit integer whose bit
$n-1$ is set exactly when row $n$ of Table~\ref{S-tab:hyperplanes} lies in
the flat; the two data files below index the rows of Table~\ref{S-tab:hyperplanes}
from $0$ (\texttt{basis\_indices}) and from $1$
(\texttt{basis\_hyperplane\_ids\_1\_based}) respectively.

A flat is \emph{feasible} if some strictly increasing real alphabet has
exactly that zero profile. Of the $487$ flats, $342$ are feasible, namely
$1$, $35$ and $306$ in ranks $0$, $1$ and $2$; the other $145$, all of rank
two, are infeasible. Two different exact certificates establish this
split.

\emph{The producer} (\texttt{classify\_q2\_low\_rank.py}) works with the
basis gap rows $E$ of a flat and the polytope
$\{g\in\R^5:Eg=0,\ g\ge0,\ \sum_jg_j=1\}$, whose vertices it computes in
exact rational arithmetic. The flat is declared feasible when the vertex
set is nonempty and every coordinate is positive at some vertex, in which
case the average of the vertices is a strictly positive gap vector; a
strictly positive point is a convex combination of vertices, so the
criterion is exact. For an infeasible flat the record notes either that
the polytope is empty or the index of a gap that vanishes at every vertex
(\texttt{forced\_zero\_gap}). For every feasible flat the producer also
finds a rational point of the polytope whose zero mask is exactly the flat,
by combining the vertices with the weights $1,w,w^2,\ldots$ for an integer
$w$ and excluding the finitely many weights at which some further row
vanishes, and clears denominators; the resulting integer alphabet
(\texttt{real\_type\_integer\_witness}) has exactly that zero profile but
is not minimal.

\emph{The verifier} (\texttt{verify\_q2\_low\_rank.py}) uses a different
certificate for infeasibility: for each of the $145$ infeasible flats it
records an integer combination $c_1E_1+c_2E_2$ of the two basis gap rows
that is a nonzero vector with nonnegative coordinates. Such a vector has
positive inner product with every strictly positive gap vector, while the
basis equations would make that product zero; hence no strictly increasing
real alphabet lies on the flat. For example, the flat with zero mask $3$
has basis rows $1$ and $2$ (the equalities $t_0+t_2=2t_1$ and
$t_0+t_3=2t_1$), with gap rows $(-1,1,0,0,0)$ and $(-1,1,1,0,0)$; the
combination $(-1,1)$ gives $(0,0,1,0,0)$, that is, the two equalities force
$g_3=t_3-t_2=0$. The verifier also recomputes all $487$ closures from
scratch: it derives the $35$ rows from the differences of the $21$
multiplicity vectors of unordered two-term sums, allowing repeated letters, keeping exactly the primitive rows whose
gap coefficients have both signs, and computes ranks over the field of
$101$ elements, which is exact here because every minor involved has
absolute value at most $3!\cdot2^3=48<101$.

\subsection{Exact minimum models}\label{S-supp:models}
The integer alphabets beginning at $0$ with diameter at most $24$ are the
$\binom{24}5=42{,}504$ choices of five positive letters in
$\{1,\ldots,24\}$. Of these, $1{,}470$ have complete short-relation rank
$0$, $12{,}580$ rank $1$, $20{,}321$ rank $2$ and $8{,}133$ rank at least
$3$. The producer evaluates the literal two-term-sum collisions of each
alphabet, the verifier evaluates the $35$ equalities of
Table~\ref{S-tab:hyperplanes} directly, and both obtain the same dictionary
assigning to each of the $342$ feasible flats its minimum-diameter model
(ties broken lexicographically). Every feasible flat occurs within the
cutoff, so the minima are exact, and every minimum model is primitive.
Table~\ref{S-tab:distribution} gives the distribution of the minimum
diameters; the maxima are $17$, $22$ and $24$ in ranks $0$, $1$ and $2$.

\begin{table}[htbp]
\caption{Number of feasible flats with a given minimum model diameter.}
\label{S-tab:distribution}
\begin{center}\small
\begin{tabular}{@{}lrrrrrrrrrrrrrr@{}}
\toprule
diameter & 10 & 11 & 12 & 13 & 14 & 15 & 16 & 17 & 18 & 19 & 20 & 22 & 24 & total\\
\midrule
rank 0 & & & & & & & & 1 & & & & & & 1\\
rank 1 & & & & 2 & 4 & 5 & 12 & 6 & & 4 & & 2 & & 35\\
rank 2 & 3 & 10 & 29 & 46 & 28 & 56 & 48 & 28 & 24 & 8 & 14 & 8 & 4 & 306\\
\bottomrule
\end{tabular}
\end{center}
\end{table}

The extremal profiles are the following, with their basis rows numbered as
in Table~\ref{S-tab:hyperplanes}.
\begin{center}\small
\begin{tabular}{@{}rlll@{}}
\toprule
rank & basis rows & zero profile & minimum model\\
\midrule
$0$ & none & empty & $\{0,1,4,10,12,17\}$\\
$1$ & $4$ & $t_0+t_5=2t_1$ & $\{0,11,12,15,20,22\}$\\
$1$ & $10$ & $t_0+t_5=2t_4$ & $\{0,1,4,9,11,22\}$\\
$2$ & $3,10$ & $t_0+t_4=2t_1,\ t_0+t_5=2t_4$ & $\{0,6,7,10,12,24\}$\\
$2$ & $8,10$ & $t_0+t_4=2t_3,\ t_0+t_5=2t_4$ & $\{0,1,4,6,12,24\}$\\
$2$ & $4,13$ & $t_0+t_5=2t_1,\ t_1+t_5=2t_2$ & $\{0,12,18,19,22,24\}$\\
$2$ & $4,16$ & $t_0+t_5=2t_1,\ t_1+t_5=2t_4$ & $\{0,12,13,16,18,24\}$\\
\bottomrule
\end{tabular}
\end{center}
The two rank-one maxima are the cluster family of Theorem~\ref{M-thm:cluster-k} with $k=5$,
$q=2$ (Corollary~\ref{M-cor:six-explanations}), in its two orientations; the first rank-two maximum is
the tower family of Corollary~\ref{M-cor:six-explanations}(2). The rank-one and rank-two profiles
listed here have zero profiles equal to their basis rows, since no further
row lies in the span.

\subsection{The two high ranks}\label{S-supp:high-ranks}
Profiles of rank three and four are not enumerated. Rank three is the case
$r=m-3$ of Theorem~\ref{M-thm:corank-model} with $m=6$ and $q=2$, which gives a primitive model of
diameter at most $(q+1)H\le(2+1)\cdot2^3=24$. Rank four is the
one-dimensional-kernel case of Corollary~\ref{M-cor:max-rank}, which gives diameter at most
$2^4=16$. The lower bound $24$ is the rank-three family
$\{0,\theta,1,2,4,8\}$ of Theorem~\ref{M-thm:corank-model}, whose equality model is
$\{0,1,3,6,12,24\}$.

\subsection{Electronic certificates and verification scope}
\label{S-supp:package}
The electronic supplement supplies the $35$ hyperplane rows, all $487$
closed masks, the $342$ minimum integer models and the $145$ nonnegative
dual combinations described above. Each flat is recorded with its mask,
rank and basis, together with its feasibility certificate or minimum model.
The mask and basis-index conventions are those of
\S\ref{S-supp:hyperplanes}. Two separate exact programs reproduce the
classification and the minima among all $42{,}504$ integer candidates;
their methods are described in \S\ref{S-supp:models}. Instructions for
running them accompany the code and data archive.

The finite construction checks cover the four-letter relation-free and
sharp rank-one families, the five-letter relation-free family and the
cluster family for $q=2,\ldots,6$, and the proper-power examples at $q=4,6$.
The radius checks determine the four-letter per-rank maxima for
$q=2,\ldots,5$ and $L_5(q)$ for $q=2,3,4$, and check the displayed
five-letter relation-free construction through $q=8$. These finite checks
support the listed instances; the parameter-uniform results follow from
the proofs in the main article.

\section{Tables}\label{S-supp:tables}

\subsection{The values \texorpdfstring{$L_4(q)$}{L4(q)}}
Shin's Theorem~10.3 \cite{S-Shin26} gives
\[
 L_4(q)=\binom{q+2}2+[q\text{ odd}],
\]
attained by the optimal ruler $\{0,1,1+\tbinom{q+1}2,L_4(q)\}$, which is
the rank-zero alphabet $T_q=\{0,1,b,b+d\}$ of the proof of Theorem~A; after
a shift of the index this is sequence A227589 of the OEIS \cite{S-OEIS}.
Table~\ref{S-tab:L4} lists the values for $q\le8$. The per-rank maxima of
Remark~\ref{M-rem:four-ranks} are also shown: rank one $q(q+1)$ and rank two $q^2$, so that
$H_4(q)=q(q+1)$ is the rank-one value for every $q\ge2$, with the rank-zero
value $L_4(q)$ equal to it only at $q=2$.

\begin{table}[htbp]
\caption{$L_4(q)$, its optimizer, and the four-letter per-rank maxima.}
\label{S-tab:L4}
\begin{center}\small
\begin{tabular}{@{}rrlrr@{}}
\toprule
$q$ & $L_4(q)$ & optimizer $\{0,1,1+\binom{q+1}2,L_4(q)\}$ & rank one $q(q+1)$ & rank two $q^2$\\
\midrule
2 & 6 & $\{0,1,4,6\}$ & 6 & 4\\
3 & 11 & $\{0,1,7,11\}$ & 12 & 9\\
4 & 15 & $\{0,1,11,15\}$ & 20 & 16\\
5 & 22 & $\{0,1,16,22\}$ & 30 & 25\\
6 & 28 & $\{0,1,22,28\}$ & 42 & 36\\
7 & 37 & $\{0,1,29,37\}$ & 56 & 49\\
8 & 45 & $\{0,1,37,45\}$ & 72 & 64\\
\bottomrule
\end{tabular}
\end{center}
\end{table}

\subsection{Witnesses and equality models}
Table~\ref{S-tab:witnesses} collects the explicit alphabets of the main
article: the relation-free (rank-zero) constructions, the extremal real
alphabets that force the sharp values, and the integer models attaining
them. Throughout, $0<\theta<\eta<1$ and $1,\theta,\eta$ are rationally
independent, $b=q(q+1)/2+1$, $d=q$ for even $q$ and $d=q+1$ for odd $q$,
and $c=b+d$.

\begin{table}[htbp]
\caption{Explicit constructions, witnesses and equality models.}
\label{S-tab:witnesses}
\begin{center}\small
\begin{tabular}{@{}lllr@{}}
\toprule
statement & real alphabet or role & integer alphabet & diameter\\
\midrule
Thm.~A, rank $0$ & relation-free model & $\{0,1,b,b+d\}$ & $L_4(q)$\\
Thm.~A, rank $1$ & extremal $\{0,\theta,1,q\}$ & $\{0,1,q+1,q(q+1)\}$ & $q(q+1)$\\
Thm.~A, rank $2$ & sharpness of $q^2$ & $\{0,q,q+1,q^2\}$ & $q^2$\\
Thm.~B, rank $0$, $q\ge3$ & relation-free model & $\{0,1,b,c,qc+1\}$ & $qc+1$\\
Thm.~B, rank $0$, $q=2$ & relation-free model & $\{0,1,4,9,11\}$ & $11$\\
Thm.~B, rank $2$ & extremal $\{0,\theta,1,q,q^2\}$ & $\{0,1,q+1,q(q+1),q^2(q+1)\}$ & $q^2(q+1)$\\
Thm.~\ref{M-thm:corank-model} & corank-two extremal family & its power-family model & $q^{m-3}(q+1)$\\
Thm.~C & extremal $\{0,\theta,1,2,4,8\}$ & $\{0,1,3,6,12,24\}$ & $24$\\
Cor.~\ref{M-thm:cluster-model}, $q=2$ & $\{0,1,1+\theta,1+\eta,2\}$ & $\{0,6,7,10,12\}$ & $12$\\
Cor.~\ref{M-thm:cluster-model}, $q=3$ & $\{0,2,2+\theta,2+\eta,3\}$ & $\{0,22,23,29,33\}$ & $33$\\
Prop.~\ref{M-prop:proper-power}, $q=4$ & $\{0,\theta,\eta,1,3\}$ & $\{0,1,10,16,48\}$ & $48$\\
Prop.~\ref{M-prop:proper-power}, $q=6$ & $\{0,\theta,\eta,1,5\}$ & $\{0,2,21,30,150\}$ & $150$\\
\bottomrule
\end{tabular}
\end{center}
\end{table}

The rank-zero rows are explicit models of the single relation-free profile
and bound $L_4(q)$ and $L_5(q)$ from above; the value $L_4(q)$ is exact by
Shin's theorem. The present paper does not determine $L_5(q)$ for arbitrary $q$; exhaustive search
 gives $L_5(2)=11$ ($\{0,1,4,9,11\}$),
$L_5(3)=23$ ($\{0,1,15,18,23\}$) and $L_5(4)=41$ ($\{0,1,24,37,41\}$),
whereas the construction of Corollary~\ref{M-cor:five-rank-zero} gives $11$, $34$ and $61$, so that
construction is not optimal in the two cases $q=3,4$. Table~\ref{S-tab:instances}
gives the numerical instances of the rank-zero and rank-one rows for
$q\le8$. The finite verification ranges are specified in \S\ref{S-supp:package};
in particular the relation-free constructions are checked through $q=8$.

\begin{table}[htbp]
\caption{Numerical instances of the four- and five-letter constructions.}
\label{S-tab:instances}
\begin{center}\small
\begin{tabular}{@{}rlrlrl@{}}
\toprule
$q$ & $\{0,1,b,b+d\}$ & $L_4(q)$ & $\{0,1,b,c,qc+1\}$ & $qc+1$ & $\{0,1,q+1,q(q+1)\}$\\
\midrule
2 & $\{0,1,4,6\}$ & 6 & $\{0,1,4,9,11\}$ (special) & 11 & $\{0,1,3,6\}$\\
3 & $\{0,1,7,11\}$ & 11 & $\{0,1,7,11,34\}$ & 34 & $\{0,1,4,12\}$\\
4 & $\{0,1,11,15\}$ & 15 & $\{0,1,11,15,61\}$ & 61 & $\{0,1,5,20\}$\\
5 & $\{0,1,16,22\}$ & 22 & $\{0,1,16,22,111\}$ & 111 & $\{0,1,6,30\}$\\
6 & $\{0,1,22,28\}$ & 28 & $\{0,1,22,28,169\}$ & 169 & $\{0,1,7,42\}$\\
7 & $\{0,1,29,37\}$ & 37 & $\{0,1,29,37,260\}$ & 260 & $\{0,1,8,56\}$\\
8 & $\{0,1,37,45\}$ & 45 & $\{0,1,37,45,361\}$ & 361 & $\{0,1,9,72\}$\\
\bottomrule
\end{tabular}
\end{center}
\end{table}

For $q\ge3$ the five-letter rank-zero diameter $qc+1$ is at most
$(q^3+3q^2+4q+2)/2<q^2(q+1)$ (Corollary~\ref{M-cor:five-rank-zero}); at $q=2$ the general
construction would give $\{0,1,4,6,13\}$, and the alphabet $\{0,1,4,9,11\}$
of smaller diameter is used instead.

\section{An alternative lattice proof}\label{S-supp:lattice}

\begin{Sremark}[a lattice proof of the upper bound]\label{S-rem:lattice-proof}
The upper bound $(q+1)H$ can also be obtained by a two-dimensional lattice
subdivision, which we record because it uses only the two extreme rays of
the ordered kernel cone. Let $M$ be the matrix of the $r=m-3$ selected rows,
let $\Lambda=\ker M\cap\Z^{m-1}$, a saturated lattice of rank two, and let
$K=\ker_\R M\cap\{0\le x_1\le\cdots\le x_{m-1}\}$, a two-dimensional pointed
cone with strictly ordered relative interior on which height is positive.
Each extreme ray of $K$ is obtained by imposing at least one ordering
boundary; appending the corresponding unit-transfer row to $M$ produces a
rank-$(m-2)$ matrix with a one-dimensional kernel, whose cofactor vector is
an integer ray generator of height at most $H$ by Lemma~\ref{M-lem:mass-minor},
and making it primitive only reduces the height. If the primitive ray
generators $u_0,v_0$ are a basis of $\Lambda$, stop. Otherwise their index
$D>1$ in $\Lambda$ is witnessed by a lattice point
$w=\alpha u_0+\beta v_0\in\Lambda$ with $0\le\alpha,\beta<1$, and in fact
$0<\alpha,\beta$, since a nonzero fractional multiple of a primitive ray
generator is not an integer vector; replacing $w$ by $u_0+v_0-w$ if necessary,
$\alpha+\beta\le1$, so the primitive generator $w_0$ of $w$ lies in
$\conv(0,u_0,v_0)$, has height at most $H$, and lies strictly between the two
rays. The pair $(u_0,w_0)$ has strictly smaller index: writing $w=gw_0$ with
$g\ge1$ the gcd removed from $w$, and taking determinants in lattice
coordinates of $\Lambda$, the index of $(u_0,w_0)$ is
$|\det(u_0,w_0)|=|\det(u_0,w)|/g=\beta D/g<D$. Repeating the step
on the smaller cone terminates with a basis $u,v$ of $\Lambda$ inside $K$,
both of height at most $H$; positive combinations of $u$ and $v$ are strictly
ordered, because no ordering functional vanishes on the whole kernel plane.
Now take the least integer $a\ge1$ such that $w=u+av$ has height greater
than $qH$; then $qH<\height(w)\le qH+H$. Its lattice coordinates $(1,a)$ show
that $w$ is primitive in $\Lambda$, and saturation shows that it is primitive
in $\Z^{m-1}$. Every original $q$-relation vanishes on $w$. If a new
independent $q$-relation vanished on $w$, appending it to $M$ would give a
rank-$(m-2)$ matrix whose common kernel is the line of $w$; the primitive
cofactor generator of that line has height at most $qH$ by
Lemma~\ref{M-lem:mass-minor}, contradicting the height of $w$. Thus $w$ is an
order-preserving $q$-model of diameter at most $(q+1)H$.
\end{Sremark}

\begin{Sremark}[why the lattice step stops at $d=2$]\label{S-rem:empty-tetrahedron}
The small-height unimodular-basis lemma used in
Remark~\ref{S-rem:lattice-proof} is false for arbitrary rank-three cones. For
any integer $H\ge2$ consider the cone in $\R^3$ generated by
\[
 u=(1,0,H),\qquad v=(0,1,H),\qquad w=(1,1,H),
\]
with the last coordinate as height. Every nonzero integer point of the cone
of height at most $H$ is one of $u,v,w$: at height $z$ with $0<z<H$ the cone
inequalities give $0\le x,y\le z/H<1$ and $x+y\ge z/H>0$, impossible for
integers $x,y$, and at height $H$ the only choices are $(x,y)=(1,0),(0,1),(1,1)$.
These three primitive generators have determinant of absolute value $H$, not
$1$, so no lattice basis inside this cone has all heights at most $H$. This
is the standard empty-tetrahedron obstruction; it refutes the generic lattice
lemma in dimension three, not any model theorem, and it is the reason why
Theorem~\ref{M-thm:flag-model} works with linearly independent rays rather than
with a lattice basis.
\end{Sremark}

\subsection*{Use of generative AI}
Generative AI tools, including OpenAI Codex, assisted with mathematical
analysis, research programming, literature review and manuscript preparation.
The author has reviewed the manuscript and takes responsibility for its content.


\begin{thebibliography}{99}

\bibitem{M-ABC18} G.~Amirkhanyan, A.~Bush and E.~Croot,
\emph{Order-preserving Freiman isomorphisms}, Integers \textbf{18}
(2018), A8, 18 pp.; arXiv:1409.8535.

\bibitem{M-FB16} A.~R.~Freedman and T.~C.~Brown,
\emph{Sequences on sets of four numbers},
Integers \textbf{16} (2016), A33.

\bibitem{M-Fre73} G.~A.~Freiman,
\emph{Foundations of a Structural Theory of Set Addition},
Translations of Mathematical Monographs 37, American Mathematical Society,
Providence, 1973.

\bibitem{M-GR09} A.~Geroldinger and I.~Z.~Ruzsa,
\emph{Combinatorial Number Theory and Additive Group Theory},
Advanced Courses in Mathematics CRM Barcelona, Birkh\"auser, Basel, 2009.

\bibitem{M-Gry13} D.~J.~Grynkiewicz,
\emph{Structural Additive Theory},
Developments in Mathematics 30, Springer, 2013.

\bibitem{M-KL00} S.~V.~Konyagin and V.~F.~Lev,
\emph{Combinatorics and linear algebra of Freiman's isomorphism},
Mathematika \textbf{47} (2000), 39--51.

\bibitem{M-Nat18} M.~B.~Nathanson,
\emph{MSTD sets and Freiman isomorphisms},
Funct. Approx. Comment. Math. \textbf{58} (2018), no.~2, 187--205;
arXiv:1609.04578.

\bibitem{M-Nathanson25} M.~B.~Nathanson,
\emph{The third positive element in the greedy $B_h$-set},
Palestine Journal of Mathematics \textbf{14} (2025), 213--216;
arXiv:2310.14426v3.

\bibitem{M-Nat26} M.~B.~Nathanson,
\emph{Compression and complexity for sumset sizes in additive number theory},
J. Number Theory \textbf{281} (2026), 321--343;
DOI \href{https://doi.org/10.1016/j.jnt.2025.09.025}{10.1016/j.jnt.2025.09.025}; arXiv:2505.20998.

\bibitem{M-OBr25} K.~O'Bryant,
\emph{On Nathanson's triangular number phenomenon},
arXiv:2506.20836 (2025).

\bibitem{M-OEIS} The OEIS Foundation,
\emph{The On-Line Encyclopedia of Integer Sequences},
sequence A227589, \url{https://oeis.org/A227589}.

\bibitem{M-Shin26} H.~Shin,
\emph{Iterated-sumset spectra: The complete exponent law and its rank geometry},
\href{https://arxiv.org/abs/2609.01690v1}{arXiv:2609.01690v1},
1 September 2026.

\bibitem{M-Spec26} E.~Zhang,
\emph{The additive-square-free spectrum below 139},
manuscript (2026).

\end{thebibliography}

\begin{thebibliography}{9}

\bibitem{S-OEIS} The OEIS Foundation,
\emph{The On-Line Encyclopedia of Integer Sequences},
sequence A227589, \url{https://oeis.org/A227589}.

\bibitem{S-Shin26} H.~Shin,
\emph{Iterated-sumset spectra: The complete exponent law and its rank geometry},
\href{https://arxiv.org/abs/2609.01690v1}{\texttt{arXiv:2609.01690v1}},
1 September 2026.

\end{thebibliography}
\end{document}